\documentclass[
  reqno,          
  tbtags,         
  intlimits,      
  oneside,
  letterpaper,
  12pt
]{amsart}

\usepackage[T1]{fontenc}     
\usepackage[english]{babel}  
\usepackage{lmodern}         
\usepackage{microtype}       
\usepackage{setspace} 
\usepackage[margin=1in]{geometry} 
\usepackage[indent, skip=5pt plus 2pt minus 1pt]{parskip} 

\usepackage{mathtools}  
\usepackage{amssymb, amsfonts, amsthm, amscd} 

\usepackage[mathscr]{eucal}   

\usepackage{graphicx}
\usepackage{subcaption}   
\usepackage{wrapfig}
\graphicspath{{./images/}} 

\usepackage{tikz}
\usepackage{tikz-cd}      

\usepackage{enumitem}     
\usepackage[title]{appendix} 

\usepackage{xcolor}
\definecolor{shamrockgreen}{RGB}{0,158,97}
\definecolor{forestgreen}{RGB}{34,139,34}
\definecolor{darkblue}{RGB}{0,0,139}

\definecolor{authorJ}{RGB}{247,127,0}   
\definecolor{authorL}{RGB}{35,35,255}   
\definecolor{authorP}{RGB}{254,33,139}  

\numberwithin{equation}{section} 

\theoremstyle{plain} 
\newtheorem{theorem}{Theorem}[section]
\newtheorem{proposition}[theorem]{Proposition}
\newtheorem{lemma}[theorem]{Lemma}
\newtheorem{corollary}[theorem]{Corollary}

\theoremstyle{definition} 
\newtheorem{definition}[theorem]{Definition}

\theoremstyle{remark} 
\newtheorem{remark}[theorem]{Remark}

\usepackage{csquotes} 
\usepackage[
  backend=biber,
  style=alphabetic,
  sorting=ynt
]{biblatex}
\usepackage{hyperref}
\hypersetup{
  colorlinks=true,
  linkcolor=forestgreen,
  citecolor=blue,
  urlcolor=darkblue,
  pdftitle={Working paper},
  pdfpagemode=UseNone 
}

\usepackage[nameinlink,noabbrev]{cleveref} 

\usepackage{setspace}

\newcommand{\bbR}{\mathbb{R}}

\newcommand{\bbZ}{\mathbb{Z}}
\newcommand{\bbN}{\mathbb{N}}

\newcommand{\cL}{\mathcal{L}}

\newcommand{\mrm}[1]{\mathrm{#1}}

\newcommand{\mcl}[1]{\mathcal{#1}}

\newcommand{\rd}{\mathrm{d}}

\DeclareMathOperator{\Av}{Av}
\DeclareMathOperator{\Lip}{Lip}

\newcommand{\tX}{\widetilde{X}}
\newcommand{\tx}{\widetilde{x}}
\newcommand{\tv}{\widetilde{v}}

\newcommand{\LtX}{\mathcal{L}_{\tX}}
\newcommand{\hhat}[1]{\widehat{#1}}

\usepackage{xparse}
\NewDocumentCommand{\Iup}{ s o m m }{%
  \ensuremath{%
    I^{\uparrow}\IfNoValueTF{#2}{}{_{#2}}%
    \IfBooleanTF{#1}{\!\left(#3,\,#4\right)}{(#3,\,#4)}%
  }%
}
\NewDocumentCommand{\Idown}{ s o m m }{%
  \ensuremath{%
    I^{\downarrow}\IfNoValueTF{#2}{}{_{#2}}%
    \IfBooleanTF{#1}{\!\left(#3,\,#4\right)}{(#3,\,#4)}%
  }%
}

\newcommand{\Teichmuller}{Teichm\"uller}

\usepackage{xcolor}
\definecolor{CommentBlue}{HTML}{5DADE2}      
\definecolor{CommentBlueBg}{HTML}{EAF4FF}    

\AtBeginDocument{\setcounter{page}{1}}

\usepackage[foot]{amsaddr} 

\newcommand{\commonaffil}{Department of Mathematics, University of Chicago, USA}
\address{\commonaffil}

\author{Polina Baron}
\email{polina.baron.22.10@gmail.com}

\author{Elizaveta Shuvaeva}
\email{elizshuv@gmail.com}

\title[Unique ergodicity of branched covers of translation surfaces]{Unique ergodicity of branched covers\\ of translation surfaces}

\begin{document}

\begin{abstract}
Let $X$ be a finite-area translation surface whose vertical flow is uniquely ergodic. Given a slit joining two nonsingular points of $X$, one can form a branched cyclic cover by gluing $\mathrm{N}$ copies of $X$ crosswise along the slit. We study when the vertical flow on the resulting cover is uniquely ergodic.

We first prove a geometric criterion for unique ergodicity of the branched cover. We show that if, for a sequence of times along the Teichm\"uller geodesic, one endpoint of the slit is contained in embedded Euclidean disks of uniformly positive radius that avoid the other endpoint, then the branched cover is uniquely ergodic. The proof uses the special symmetry of the cover together with an analysis of forward and backward generic points for the vertical flow. 

We then show that this criterion applies for Lebesgue-almost every choice of slit endpoint under a natural geometric hypothesis on the Teichm\"uller orbit of $X$, namely a uniform lower bound for the embedded radius along a subsequence. Finally, we give sufficient conditions for such a lower bound in terms of the cylinder geometry of $g_tX$, introducing the notion of pipe cylinders and proving that embedded disks of definite size must exist. 

As a consequence, for the class of uniquely ergodic translation surfaces, almost every slit produces a uniquely ergodic branched $\mathrm{N}$-cover.
\end{abstract}

\maketitle

{\small 
\tableofcontents
}


\section{Introduction}
A \emph{flat surface} \cite{Zorich2006,Wright2014,Matheus2018,
Massart2021,Filip2024}  is a Riemann surface with a fixed holomorphic 1-form. It can be obtained by identifying the sides of a polygon in the Euclidean plane by translations, so it can be called a \emph{translation surface} with a \emph{vertical flow}. It is also called an \emph{Abelian differential}. The moduli spaces of translation surfaces provide a rich interface between low-dimensional dynamics, \Teichmuller~theory, and algebraic geometry.

A translation surface can have one or more ergodic measures for the straight-line flow in the vertical direction. Since the straight-line flow preserves the natural Lebesgue measure, unique ergodicity (the case when the number of ergodic measures is $1$) of the flow implies that the unique invariant probability measure is the normalized Lebesgue measure. Masur \cite{Masur1982} and Veech \cite{Veech1982} independently proved that almost all  (with respect to the Euclidean Lebesgue measure in period coordinates, restricted to the unit-area hypersurface $\mathcal{H}_1(\alpha)$ and normalized to be finite) translation surfaces are uniquely ergodic. Later, Masur \cite{Masur1992HD} came up with a geometric criterion that guaranteed unique ergodicity of a translation surface. This criterion was improved by Cheung--Eskin \cite{CheungEskin2007FIC} and Trevino \cite{Trevino2014FiniteArea}. Cheung--Masur \cite{CheungMasur2006Divergent} construct a quadratic differential whose vertical foliation is uniquely ergodic but whose Teichmüller geodesic diverges, proving that the Masur criterion is not necessary, only sufficient.

For any quadratic differential, the set of non-ergodic directions and the set of non-uniquely ergodic directions each have Hausdorff dimension at most $\tfrac12$. In particular, if $c(\alpha)$ denotes the Hausdorff dimension of the set of non-uniquely ergodic directions for a Masur--Veech generic surface in the stratum  $\mathcal H(\alpha)$ (called the Masur--Smillie constant), then
$$
c(\alpha)\le \tfrac12
$$
for every stratum $\mathcal H(\alpha)$ \cite{Masur1992HD,MasurSmillie1991HD}. Cheung \cite{Cheung2003} and Cheung--Hubert--Masur \cite{CheungHubertMasur2008} constructed examples where the set of    non-ergodic directions has Hausdorff dimension $\tfrac12$, showing that Masur's   upper bound is sharp. Athreya--Chaika \cite{AthreyaChaika2015} later proved that in $\mathcal H(2)$
    the Masur--Smillie constant is $c(2)=\tfrac12$.  Chaika--Masur \cite{ChaikaMasur2020} showed
    that the set of non-uniquely ergodic $d$-IETs has Hausdorff codimension
    $\tfrac12$.

For the vertical flow on a genus-$g$ translation surface, if the flow is minimal, then it has at most $g$ ergodic invariant probability measures \cite{Katok1973}. Note that, for a fixed direction, a translation surface decomposes into minimal components and periodic cylinders. In particular, if the directional flow has a periodic cylinder, then it carries uncountably many ergodic invariant probability measures, one on each periodic orbit. This is why the finiteness theorem of Katok applies not to arbitrary directional flows, but to minimal translation flows. This bound is sharp \cite{Sataev1975}. Early examples of minimal but non-uniquely ergodic straight-line flows (and interval exchange transformations) are due to Veech \cite{Veech1969} (not phrased as an IET), Keane \cite{Keane1977},  Sataev \cite{Sataev1975}, and Keynes–Newton \cite{KeynesNewton1976} (noticed that Veech's example is an IET). The example of Veech (1969) is of particular interest to us, as it inspired the constructions used in the present paper.     
\begin{figure}[h]
\centering
\begin{subfigure}{.49\textwidth}
  \centering
  \includegraphics[width=.7\linewidth]{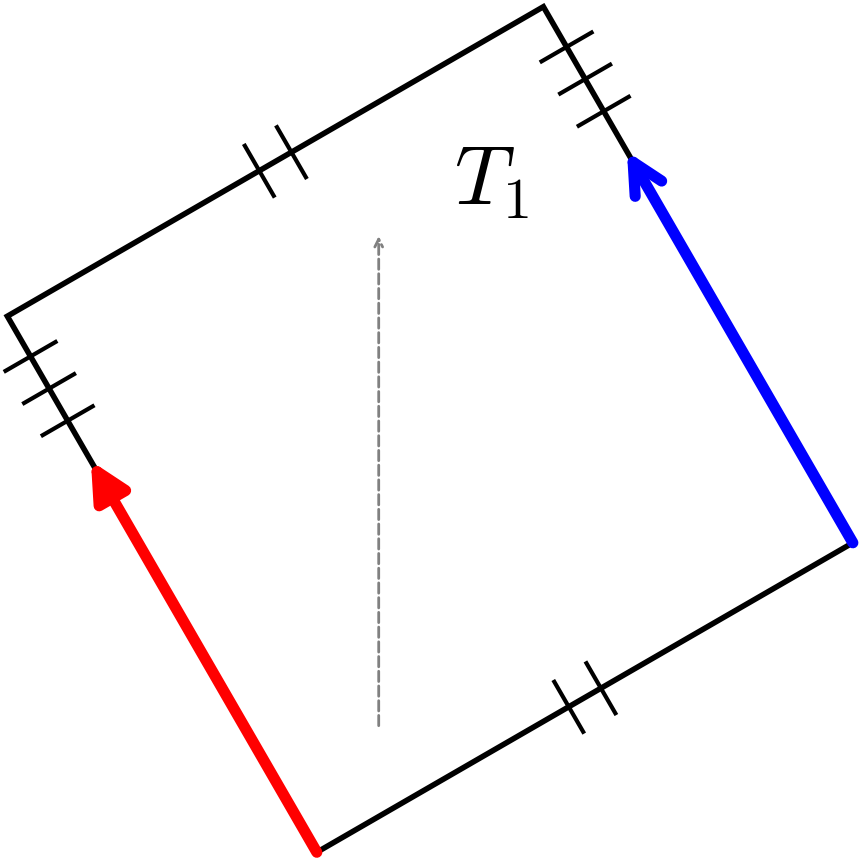}
  \label{}
\end{subfigure}%
\begin{subfigure}{.49\textwidth}
  \centering
  \includegraphics[width=.7\linewidth]{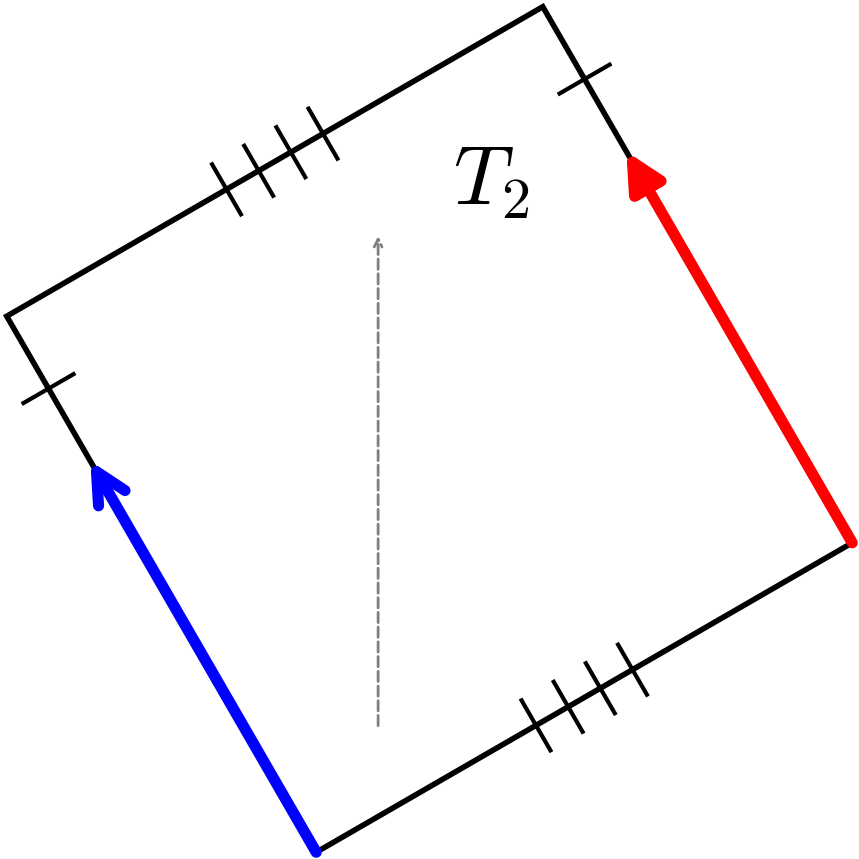}
  \label{}
\end{subfigure}
\caption{Veech's example of a NUE flat surface construction.}
\label{fig__neighorhood}
\end{figure}
 Consider a torus $[0,\,1]\times [0,\,1]$ and take two copies (sheets). Cut a \emph{slit} from $(0,\,0)$ to $(0,\,\beta)$, where $0<\beta<1$. Identify the slit sides across two surfaces. On each sheet, consider the flow generated by the vector $(\alpha,\,1)$, where $\alpha$ is \emph{well-approximable}: $$\liminf _{n \rightarrow \infty} n\cdot d(n \alpha,\, \mathbb{Z})=0.$$ 
Veech gives a condition on $\beta$ (given a fixed $\alpha$) such that the resulting translation surface is minimal but not uniquely ergodic, with exactly $2$ ergodic invariant measures $\mu^{-}, \mu^{+}$ and where Lebesgue measure $\cL$ satisfies $\cL=\frac{1}{2}\left(\mu^{-}+\mu^{+}\right)$. This construction was later generalized by Ferenczi and Hubert \cite{FerencziHubert2024Veech1969}, who give a condition for unique ergodicity of $\bbZ/n\bbZ$  extensions of a rotation of angle $\alpha$  when the initial interval exchange transformation is linearly recurrent, and there are one or two marked points.

To cut a given surface along a slit and reglue several copies cyclically along the cut is a standard way to build new translation surfaces. This construction is simple to describe, but its dynamical effect is subtle: even when the original surface has uniquely ergodic vertical flow, the resulting branched cover may in principle acquire additional invariant measures, like in the example of Veech. The goal of this paper is to understand when this does not happen.

To be more precise, let $X$ be a finite-area translation surface with uniquely ergodic vertical flow,
and let $s$ be a straight non-vertical slit joining two nonsingular points
$P,Q\in X$. Gluing $\mathrm{N}$ copies of $(X,s)$ cyclically along the slit produces a
branched cyclic $\mathrm{N}$-cover $\tX$ of $X$, branched at $P$ and $Q$. 

\vspace{.15cm}
\textbf{Conjecture (Jon Chaika).} 
For Lebesgue-almost every slit on the surface $X$, the branched cyclic $\mathrm{N}$-cover $\tX$
is uniquely ergodic.
\vspace{.15cm}

This conjecture is proven when $n=2$ and $X$ is a torus: Cheung and Eskin \cite{CheungEskin2007SlowDiv} give a complete characterisation of the set of nonergodic
directions in this case. We prove Chaika's conjecture for any $\mathrm{N}$ and any surface genus. Here, the deck transformation exchanging
the sheets strongly constrains the possible ergodic invariant measures. 

\vspace{.15cm}
\textbf{Main result.} 
Let $X$ be a finite-area translation surface whose vertical flow is uniquely ergodic. For almost every pair of points $(P,\,Q)$ on $X$ and for every slit from $P$ to $Q$, the resulting branched $\mathrm{N}$-cover construction is uniquely ergodic. 
\vspace{.15cm}

Unique ergodicity of slit-induced
branched covers is governed by a local geometric condition near one branch point
along the \Teichmuller\ orbit, while the existence of that local geometry can be
verified globally using embedded radius, Delaunay triangulations, and the
distribution of pipe cylinders.

If
the cover is not uniquely ergodic, then the deck transformation exchanges the ergodic measures and, correspondingly, exchanges the sets of generic points for those measures (see Lemma \ref{lemma__n-measures} for details). This observation turns the problem into a quantitative comparison of ergodic averages along carefully chosen orbit segments. To carry this out, in Section \ref{UE-sect2.1}, we develop a framework of forward, backward, and total genericity, together with an ``almost generic'' variant that is uniform over all $1$-Lipschitz test
functions.

The main dynamical result of the paper is a geometric criterion for unique
ergodicity of the branched cover (Theorem \ref{thm__The-CIRCLE_CRITERION}). 

\vspace{.15cm}
\textbf{Theorem A.} 
Suppose that for only one of the slit endpoints $Q\in X$, there exists a $R>0$ and a sequence of moments in time $\{t_k\}_{k\in\bbN_0}$, $\lim_{k\to\infty}t_k=+\infty$, such that the $R$-neighborhood $\mathrm{U}_{t_k}\subset X_{t_k}$ of $Q_{t_k}= g_{t_{k}}Q$ is an embedded disk not containing $P_{t_k}=g_{t_{k}}P$. 
Then the surface $\widetilde{X}$ is uniquely ergodic.
\vspace{.15cm}

The
proof uses what we call the circle argument: one lifts the embedded disk to the
branched cover, constructs symmetric points and long vertical segments inside the
lifted neighborhood, and shows that the corresponding time averages must have
the same limit. If several distinct ergodic measures existed, the deck symmetry
would force these same points to be generic for different measures, producing a
contradiction with Lemma \ref{lemma__n-measures}.


We next show that such disks arise for almost every slit endpoint under a natural
subsequential embedded-radius hypothesis (see Theorem \ref{thm__embedded-radius-ae-points}). 

\vspace{.15cm}
\textbf{Theorem B.} 
Let $X$ be a finite-area translation surface whose vertical flow is uniquely ergodic, and fix a marked point $P\in X$. Suppose that there exist $r>0$ and a sequence $t_k\to\infty$ such that
$$
r_{\mathrm{emb}}(X_{t_k})\ge r,
\qquad X_{t_k}:=g_{t_k}X,
$$
where $r_{\mathrm{emb}}(X_{t_k})$ is the lowest upper bound on the radii of disks that can be embedded into $X_{t_k}$. Then for Lebesgue-almost every point $Q\in X$ there exist a number $R(Q)>0$ and an infinite subsequence $t_{k_j}\to\infty$ such that, for every $j$, the surface $X_{t_{k_j}}$ contains an embedded Euclidean disk of radius at least $R(Q)$ centered at $g_{t_{k_j}}(Q)$, and this disk does not contain $g_{t_{k_j}}(P)$.
\vspace{.15cm}

The remaining task is to understand when the embedded-radius hypothesis holds, which we do in Section \ref{UE-sect4.2}.
For this, we introduce \emph{pipe cylinders}, maximal flat cylinders whose height
is greater than their circumference. These cylinders are exactly the geometric
features that may force long Delaunay edges and obstruct a direct construction of
embedded disks. We analyze three complementary regimes. If no pipe cylinders are
present (Lemma \ref{lemma__inscribed-triangles}), Delaunay triangulations immediately provide embedded disks of uniform
radius. If pipe cylinders occupy uniformly positive total area along a sequence (Lemma \ref{lemma__big-area-pipes}),
one can flow one of them forward until it becomes a nearly round cylinder, which
again contains a large embedded disk. If pipe cylinders persist but their total
area tends to zero (Lemma \ref{lemma__small-area-pipes}), then the complement of the singular set and the pipe region
still contains points with uniformly controlled embedded neighborhoods. Together,
these cases yield a practical geometric route to the hypotheses of our almost-every-slit theorem. 

\vspace{.15cm}
\textbf{Theorem C.} 
Let $X$ be a finite-area translation surface, and fix a marked point $P\in X$. Then there exist $r>0$ and a sequence $t_k\to\infty$ such that
$$
r_{\mathrm{emb}}(X_{t_k})\ge r,
\qquad X_{t_k}:=g_{t_k}X.
$$
\vspace{.15cm}

This paper is organized as follows. Section~\ref{UE-sect1} introduces the slit-induced branched $\mathrm{N}$-cover construction and
the associated invariant-measure dichotomy on the $\mathrm{N}$ cover. Section~\ref{UE-sect2}
develops the theory of generic and almost generic points and records the symmetry
properties imposed by the permutation. Section~\ref{UE-sect3} contains the circle argument and
the proof of the main unique ergodicity criterion. Section~\ref{UE-sect4} proves the almost-every-slit theorem from a subsequential lower bound on embedded radius and then studies
pipe cylinders as a mechanism for producing such lower bounds.

\vspace{.25cm}
\textbf{Acknowledgements:} We are immensely thankful to Jon Chaika, who suggested this problem and spent long hours discussing it with us. We are grateful to Howard Masur for helpful discussions about the second part of this paper.

\section{The setup}\label{UE-sect1}

\begin{figure}[h]
\centering
\includegraphics[width=.5\linewidth]{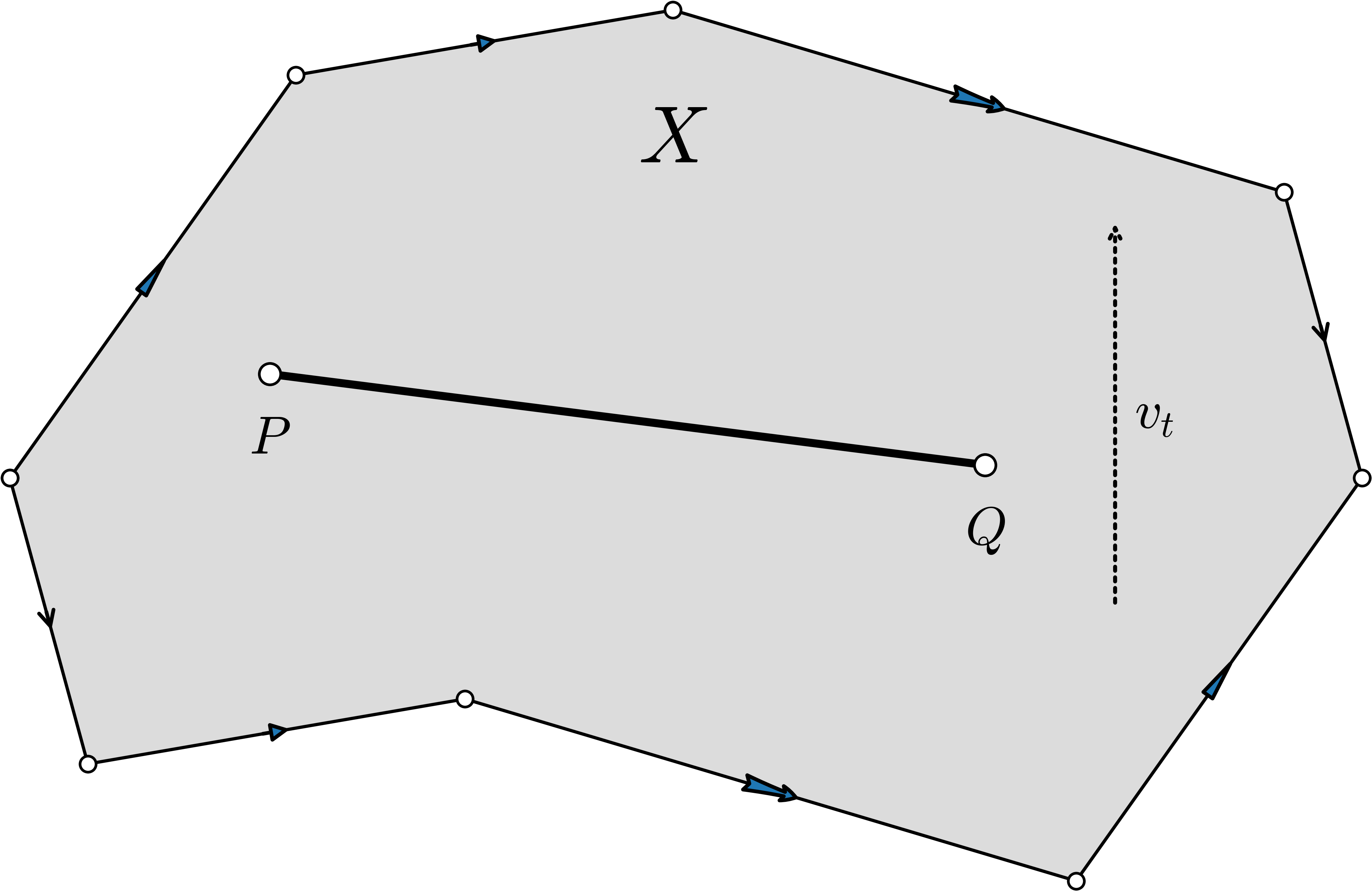}
\caption{An example of a translation surface with a slit.}
\label{fig__X}
\end{figure}

For the rest of the paper, let $X$ be a translation surface of finite area
with the set of singularities denoted by $\Sigma$ and with a uniquely
ergodic vertical flow $v_t$. Fix an integer $N\ge 2$, and let $s$ be an embedded straight-line segment
on $X$ with distinct endpoints $P,Q\in X\setminus\Sigma$, whose interior is
disjoint from $\Sigma$ (see Figure
\ref{fig__X}). The slit has to be a straight line, but can have any slope
except for the vertical one, and is allowed to be of any length.

Take $\mathrm{N}$ copies
$$
(X_0,s_0),\ (X_1,s_1),\ \ldots,\ (X_{\mathrm{N}-1},s_{\mathrm{N}-1})
$$
of the surface $(X,s)$, with indices understood modulo $\mathrm{N}$. We glue them
cyclically along the slit. More precisely, after choosing an orientation
of the slit, we glue one side of $s_i$ to the opposite side of $s_{i+1}$
for each $i\in \mathbb Z/\mathrm{N}\mathbb Z$. Reversing the orientation of the
slit replaces this cyclic gluing by its inverse, and therefore only
changes the choice of generator of the deck group.

The resulting surface will be denoted by $\tX$. A point on $\tX$ away from
the slit can be written as $(S,\,i)$, where $S$ is a position on the initial
surface $X$ and $i\in \mathbb Z/\mathrm{N}\mathbb Z$ is the index of the copy.
Under the forward vertical flow on $\tX$, the $X$-coordinate moves as usual,
and the sheet index changes cyclically whenever the trajectory crosses the
slit. With the above convention, the forward vertical flow changes
$$
i \longmapsto i+1\quad (\mathrm{mod}\;\mathrm{N})
$$
each time it crosses the slit. In other words, the surface $\tX$ is a
cyclic $\mathrm{N}$-cover of $X$, totally ramified over the two points $P$ and $Q$.
We will denote by $\tX_i$ the natural image of the $i$-th copy $X_i$ inside
$\tX$.

See Figure \ref{fig__X-tilde-construction} for a visual explanation of
the construction.

\begin{figure}[h]
\centering
\begin{subfigure}{.49\textwidth}
  \centering
  \includegraphics[width=.9\linewidth]{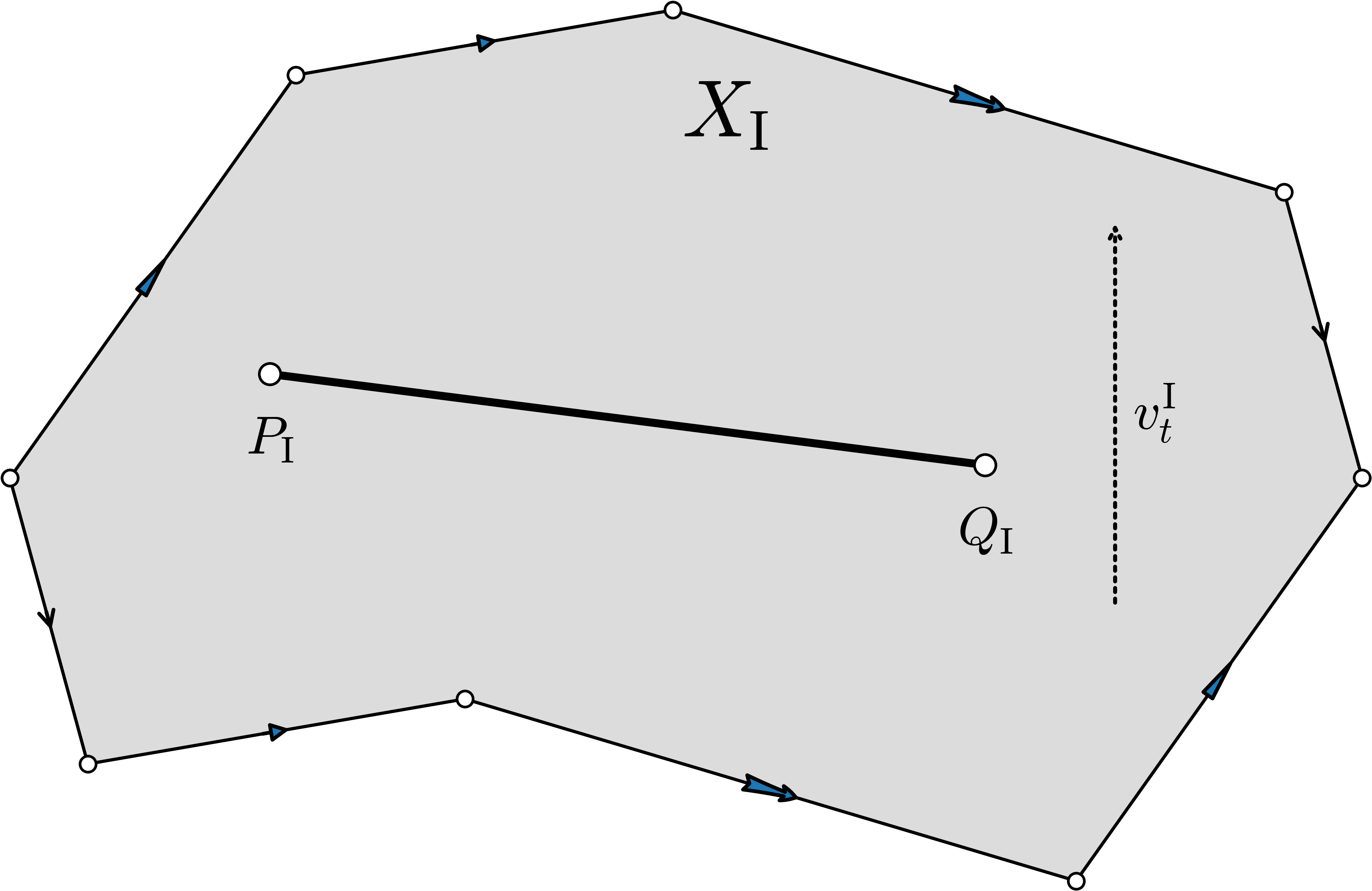}
  \label{}
\end{subfigure}%
\begin{subfigure}{.49\textwidth}
  \centering
  \includegraphics[width=.9\linewidth]{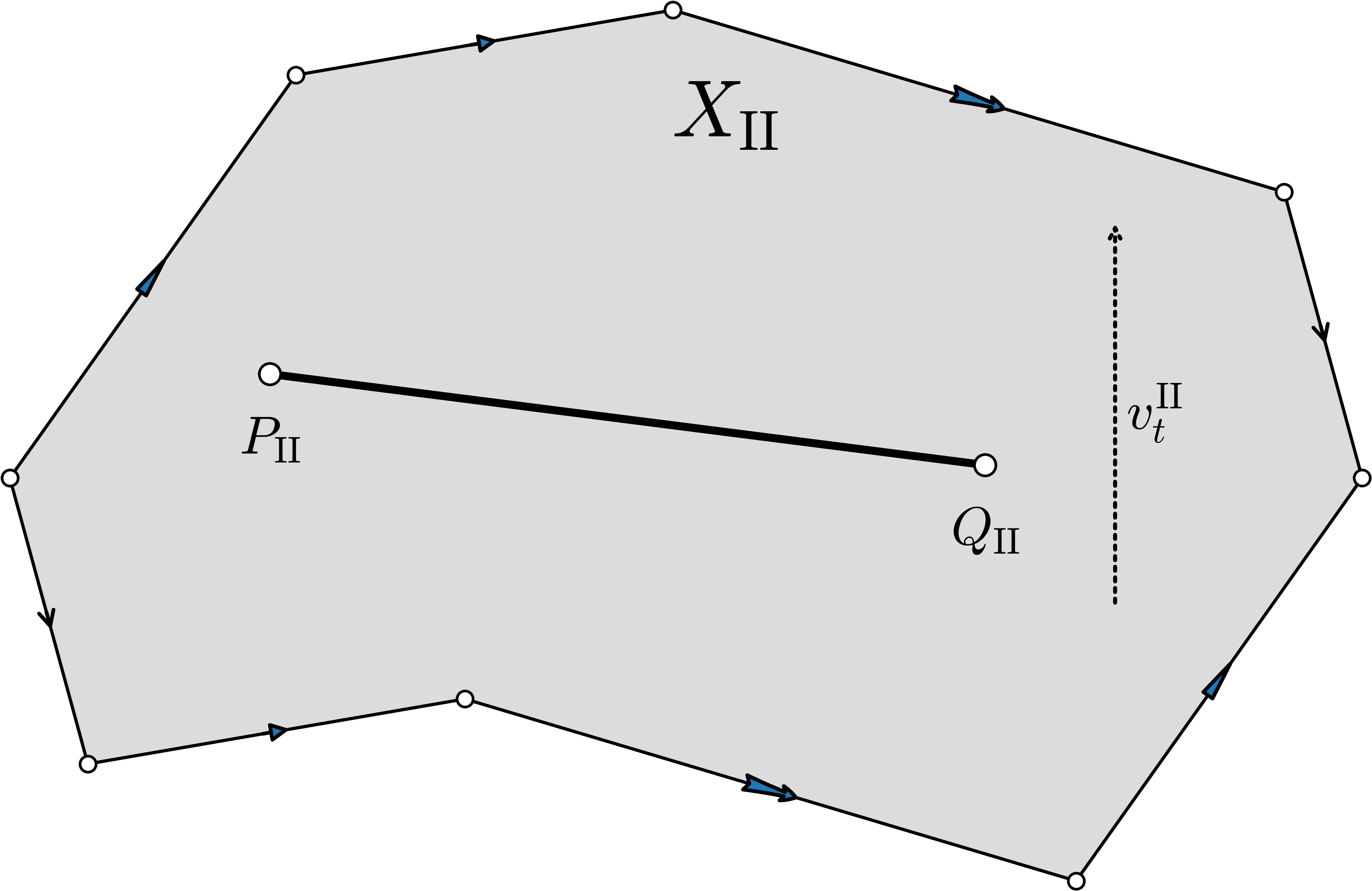}
  \label{}
\end{subfigure}
\caption{Copies used in the cyclic slit construction for $\mathrm{N}=2$.}
\label{fig__n-copies}
\end{figure}

\begin{figure}[h]
\centering
\begin{subfigure}{.49\textwidth}
  \centering
  \includegraphics[width=.9\linewidth]{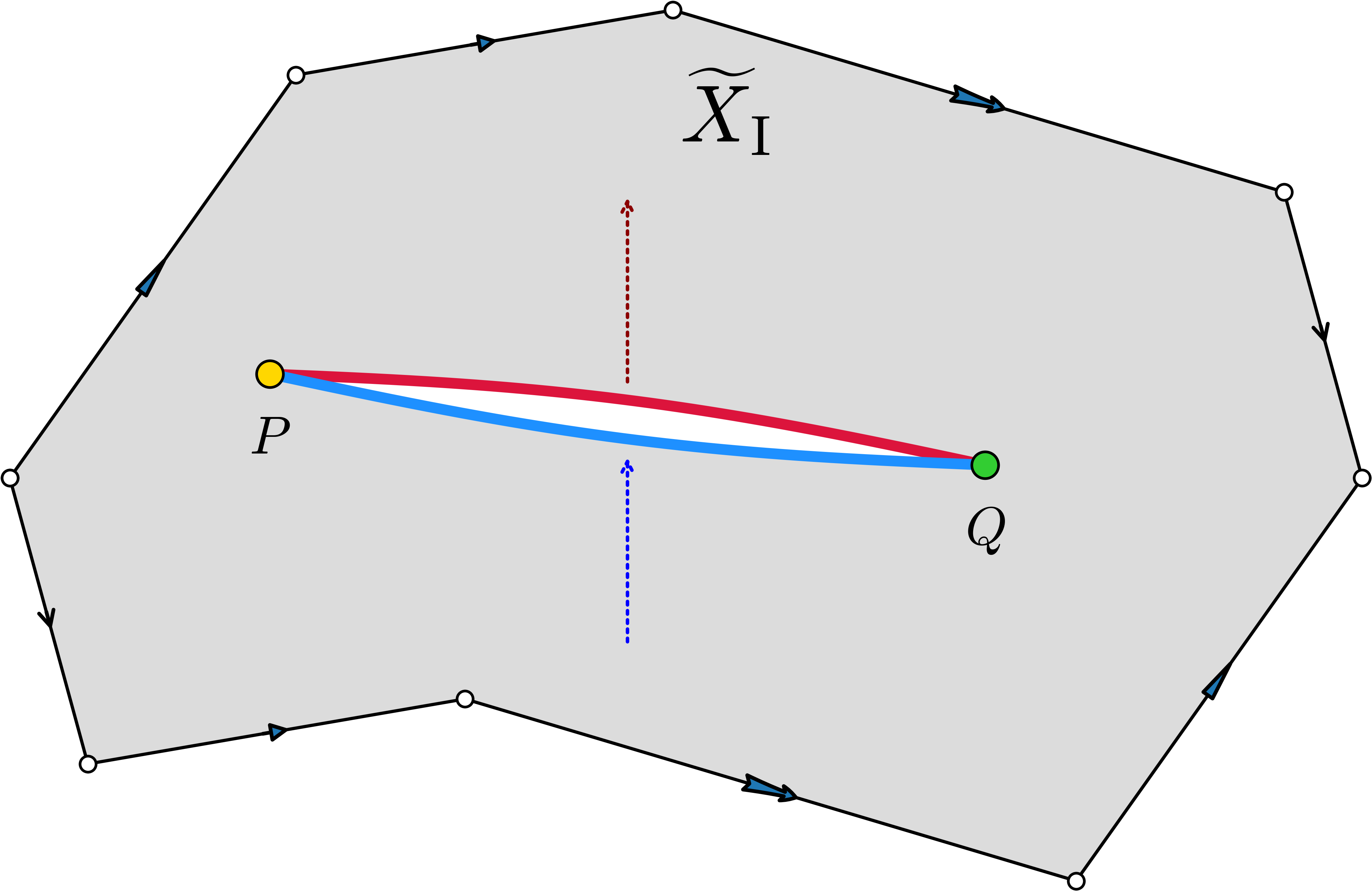}
  \label{}
\end{subfigure}%
\begin{subfigure}{.49\textwidth}
  \centering
  \includegraphics[width=.9\linewidth]{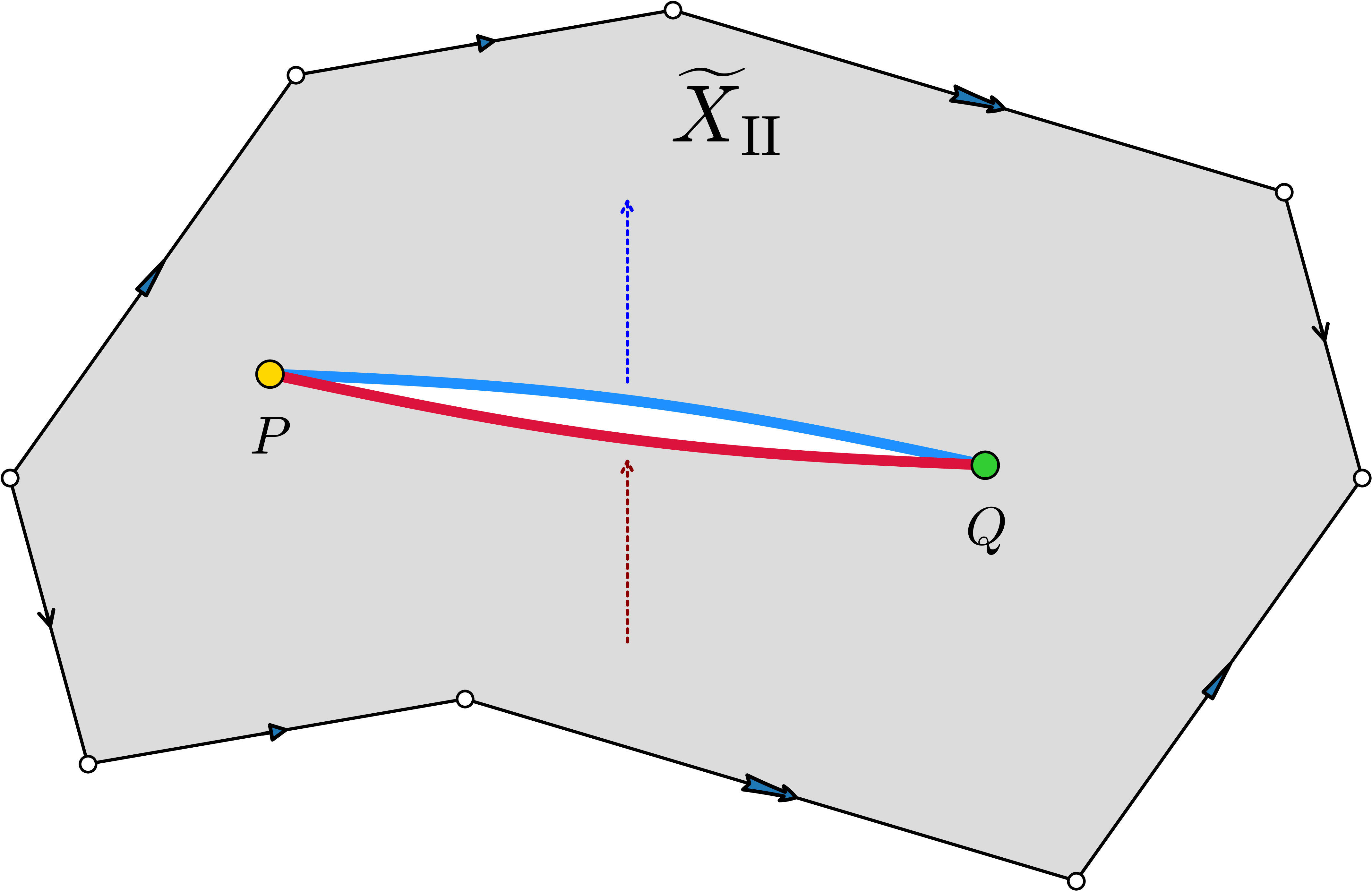}
  \label{}
\end{subfigure}
\caption{Construction of the cyclic $2$-cover surface $\tX$ and visualization of its sheets.}
\label{fig__X-tilde-construction}
\end{figure}

\begin{remark}
The condition $\operatorname{int}(s)\cap\Sigma=\varnothing$ excludes only a measure-zero family of slits. Indeed, fix a
singularity \(R\in\Sigma\). A slit whose interior contains \(R\) breaks into
two straight segments from \(R\) to the two regular endpoints. Such a
choice is determined by a cone direction at \(R\), together with the two
lengths from \(R\) to the endpoints, up to finitely many continuation
choices coming from the cone angle at \(R\). Hence, for fixed \(R\), this
exceptional family has at most three real parameters. On the other hand, the choice of two regular endpoints \(P,\,Q\in X\setminus
\Sigma\) has four real parameters. Therefore, for fixed \(R\), the family
of slits whose interior contains \(R\) is lower-dimensional and has zero
measure. Since \(\Sigma\) is finite, the union over all \(R\in\Sigma\) still
has measure zero.
\end{remark}

In the future, we will denote by
$$
\pi:\tX\to X
$$
the natural projection from $\tX$ to $X$. Thus $\pi(\tX_i)=X$ for every
$i\in \mathbb Z/\mathrm{N}\mathbb Z$. We will also denote by
$$
\varsigma:\tX\to\tX
$$
the generator of the cyclic deck group which leaves the $X$-coordinate
intact and advances the sheet index:
$$
\varsigma(S,i)=(S,i+1).
$$
Thus $\varsigma^\mathrm{N}=\operatorname{id}$. The map $\varsigma$ fixes precisely
the two ramification points lying over the endpoints of the slit, which we
denote by
$$
\widetilde P=\pi^{-1}(P),\qquad \widetilde Q=\pi^{-1}(Q).
$$
We will denote the forward vertical flow on $\tX$ by $\tv_t$ and the
backward vertical flow on $\tX$ by $\tv_t^-=\tv_{-t}$.

In local flat coordinates $z=x+iy$ coming from $\omega$, the
\emph{\Teichmuller~flow} $(g_t)$ is defined by post-composing all charts
with the linear map
$$
(x,y)\longmapsto (e^t x,\,e^{-t}y).
$$
Equivalently, $g_t$ stretches the horizontal direction by $e^t$ and
contracts the vertical direction by $e^{-t}$, preserving area. Note that
ergodic properties of translation surfaces are invariant under the action
of $g_t$. We set
$$
X_t=g_t(X),\qquad \tX_t=g_t(\tX).
$$

\begin{lemma}
For a fixed translation surface $(X,\omega)$, fixed points $P,Q\in X$, and
a fixed integer $\mathrm{N}\geq 2$, the cyclic $\mathrm{N}$-slit construction can produce
only finitely many points in the corresponding moduli space of translation
surfaces.
\end{lemma}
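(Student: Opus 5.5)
The only free datum, once $(X,\omega)$, $P$, $Q$ and $\mathrm{N}$ are fixed, is the slit $s$ itself. The plan is to show that the translation-equivalence class of $\tX$ depends only on a finite amount of discrete data attached to $s$. The key observation is that $\tX$ is a cyclic cover of $X$ branched exactly over $P,Q$, and such a cover is determined by its monodromy. Concretely, $\tX\setminus\pi^{-1}\{P,Q\}$ is the regular $\mathbb Z/\mathrm{N}\mathbb Z$-cover of $X^\circ:=X\setminus\{P,Q\}$ classified by the homomorphism
$$
\rho_s:\ \pi_1(X^\circ)\longrightarrow \mathbb Z/\mathrm{N}\mathbb Z,\qquad \gamma\longmapsto \text{(algebraic intersection number of }\gamma\text{ with }s)\bmod \mathrm{N}.
$$
This is exactly how the sheet index changes under the gluing: it advances by one at each signed crossing of the slit. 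The flat structure on the cover is the pullback $\pi^*\omega$, and completing over the punctures recovers $\tX$. Therefore two slits $s,s'$ from $P$ to $Q$ with $\rho_s=\rho_{s'}$ give covers that are isomorphic over $X$. Such an isomorphism pulls back $\omega$ to $\omega$, so the two covers define the same point of the moduli space.

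The first step is to note that $\rho_s$ factors through $H_1(X^\circ;\mathbb Z)$, since the target is abelian. It is therefore an element of $H^1(X^\circ;\mathbb Z/\mathrm{N}\mathbb Z)$, namely the reduction mod $\mathrm{N}$ of the Lefschetz dual of the relative class $[s]\in H_1(X,\{P,Q\};\mathbb Z)$. The second step is the finiteness count. The group $H^1(X^\circ;\mathbb Z/\mathrm{N}\mathbb Z)$ is finite, because $X^\circ$ has finitely generated homology: it has rank $2g+1$, and if one keeps the singularities as punctures the rank is still finite. Hence only finitely many monodromies $\rho_s$ occur, as $s$ ranges over all admissible slits. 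Each of them is pinned down by requiring its value on small loops around $P$ and $Q$ to be $\pm1$, reflecting that the cover is totally ramified there.

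The third step is to check that the cover together with its translation structure is recovered from $\rho_s$ alone. I expect this to be the main point needing care. The cut-and-paste description uses $s$ explicitly, so one has to argue that moving the slit changes $\tX$ only by a cut-and-reglue, i.e.\ by an isomorphism. I will justify this by uniqueness of covering spaces with given monodromy, applied to the unbranched part over $X^\circ$, then extending continuously over the branch points. Since the cover map is a local isometry away from the branch points, any covering isomorphism is automatically a translation equivalence. Possibly composing with $\varsigma$, or reversing orientation, changes the generator, as noted in the setup. The conclusion is that the set of surfaces $\tX$ obtained has cardinality at most $|H^1(X^\circ;\mathbb Z/\mathrm{N}\mathbb Z)|\le \mathrm{N}^{2g+1}$, which is finite.
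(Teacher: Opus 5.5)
Your proposal is correct and follows essentially the same route as the paper. Both arguments classify the unbranched part over $X^\circ=X\setminus\{P,Q\}$ by a homomorphism $\pi_1(X^\circ)\to\mathbb Z/\mathrm{N}\mathbb Z$, use that only finitely many such homomorphisms exist, extend uniquely over the punctures, and note that a covering isomorphism pulls back $\omega$ to $\omega$; your identification of the monodromy with intersection number against $s$ and the resulting bound $\mathrm{N}^{2g+1}$ via $H^1(X^\circ;\mathbb Z/\mathrm{N}\mathbb Z)$ are a sharper form of the paper's appeal to finite generation of $\pi_1(X^\circ)$.
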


\begin{proof}
Let
$$
X^\circ=X\setminus\{P,Q\}.
$$
Choose a base point $x_0\notin\{P,Q\}$. By the classification theorem for
covering spaces \cite[Theorem 1.38, p. 67]{Hatcher2002AlgebraicTopology},
connected unbranched covers of $X^\circ$ are classified by conjugacy classes of subgroups of
$\pi_1(X^\circ,x_0)$ of the corresponding index.

Since we are considering cyclic degree-$\mathrm{N}$ covers, such covers are
described, after choosing a generator of the deck group, by surjective
homomorphisms
$$
\pi_1(X^\circ,x_0)\longrightarrow \mathbb Z/\mathrm{N}\mathbb Z.
$$
Equivalently, they are described by kernels of such homomorphisms; changing
the generator of $\mathbb Z/\mathrm{N}\mathbb Z$ gives the same cover. Since
$\pi_1(X^\circ,x_0)$ is finitely generated, there are only finitely many
homomorphisms to the finite group $\mathbb Z/\mathrm{N}\mathbb Z$, and therefore
only finitely many surjective ones. Hence there are only finitely many
connected cyclic unbranched degree-$\mathrm{N}$ covers of $X^\circ$.

Given a cyclic branched $\mathrm{N}$-cover
$$
\pi:\widetilde X\to X
$$
branched at $P$ and $Q$, we obtain an unbranched cyclic degree-$\mathrm{N}$ cover of
$X^\circ$ by restricting $\pi$ to the complement of the preimages of $P$
and $Q$. This restriction defines a map from connected cyclic branched
$\mathrm{N}$-covers of $X$ branched at $P$ and $Q$, up to isomorphism over $X$, to
connected cyclic unbranched degree-$\mathrm{N}$ covers of $X^\circ$, up to
isomorphism over $X^\circ$.

Two such branched covers with isomorphic restrictions to $X^\circ$ are
isomorphic over $X$ by uniqueness of extensions across the punctures.
Therefore this map is injective. Hence there are only finitely many
connected cyclic branched $\mathrm{N}$-covers of $X$ branched at $P$ and $Q$. In
particular, the total number of cyclic $\mathrm{N}$-covers obtained through the slit
construction is finite.

Finally, isomorphic cyclic $\mathrm{N}$-covers of $X$ produce the same point in the
corresponding moduli space of translation surfaces. Indeed, suppose
$$
p_A:\widetilde X_A\to X,\qquad p_B:\widetilde X_B\to X
$$
are two branched covers whose restrictions to $X^\circ$ define the same
unbranched cover. By uniqueness of extensions to branched covers, there is
a biholomorphism
$$
F:\widetilde X_A\to \widetilde X_B
$$
such that
$$
p_B\circ F=p_A.
$$
The corresponding abelian differentials upstairs are obtained by pullback,
so the resulting translation surfaces are
$$
(\widetilde X_A,p_A^*\omega)
\qquad\text{and}\qquad
(\widetilde X_B,p_B^*\omega).
$$
Since $p_B\circ F=p_A$, we have
$$
F^*p_B^*\omega=p_A^*\omega.
$$
Thus the two differentials agree under $F$, and the two translation
surfaces are equivalent in moduli space.
\end{proof}

We will now describe a natural measure on the set of all cyclic slit
$\mathrm{N}$-covers of a translation surface $(X,\omega)$. Since $(X,\omega)$ is a
point in the moduli space, it is defined up to the corresponding
equivalence. Let us choose a fixed representative, which we will also call
$(X,\omega)$, of this equivalence class.

Any cyclic $\mathrm{N}$-slit construction over $(X,\omega)$ is determined by an
unordered pair of distinct regular points
$$
\{P,Q\}\subset X^\circ
$$
together with one of the finitely many admissible cyclic $\mathrm{N}$-covers arising
from this choice of endpoints. Thus, if $\mathcal H_\mathrm{N}(\{P,Q\})$ denotes
the finite set of translation surfaces in moduli obtained as cyclic slit
$\mathrm{N}$-covers of $(X,\omega)$ with slit endpoints $P$ and $Q$, we define
$$
\mathcal S_\mathrm{N}(X,\omega)
:=
\bigl\{
(\{P,Q\},S):
\{P,Q\}\subset X^\circ,\ P\neq Q,\ S\in \mathcal H_\mathrm{N}(\{P,Q\})
\bigr\}.
$$

There is a natural projection
$$
\operatorname{pr}:\mathcal S_\mathrm{N}(X,\omega)
\to
\bigl\{\{P,Q\}:P,Q\in X^\circ,\ P\neq Q\bigr\},
\qquad
\operatorname{pr}(\{P,Q\},\,S)=\{P,Q\}.
$$
By construction, the fibers of $\operatorname{pr}$ are finite. We equip
the base with the measure induced from the product flat area measure on
$X^\circ\times X^\circ$, and each fiber with counting measure. The
resulting natural measure on $\mathcal S_\mathrm{N}(X,\omega)$ is obtained by
integrating the fiber cardinalities over the space of slit endpoints.

Let us consider the map
$$
F_{(X,\omega)}:\mathcal S_\mathrm{N}(X,\omega)\to \mathcal H,
$$
where $\mathcal H$ denotes the appropriate moduli space of translation
surfaces. In general, the map $F_{(X,\omega)}$ need not be injective. Thus
the natural measure on the family of resulting cyclic slit $\mathrm{N}$-covers is
the pushforward of the measure defined above along the map
$F_{(X,\omega)}$.

This measure does not depend on the choice of representative of the
initial translation surface. Indeed, suppose that
$$
(X_A,\omega_A)
\qquad\text{and}\qquad
(X_B,\omega_B)
$$
are two representatives of the same moduli-space point, and let
$$
f:X_B\to X_A
$$
be a translation equivalence between them. Then $f$ is a biholomorphism
with $f_*\omega_B=\omega_A$. In particular, $f$ takes distinct regular
points to distinct regular points and sends straight-line segments
connecting them to straight-line segments connecting their images. Since
$f$ is bijective, it induces a one-to-one correspondence between
$\mathcal S_\mathrm{N}(X_A,\omega_A)$ and $\mathcal S_\mathrm{N}(X_B,\omega_B)$, and since
$f_*\omega_B=\omega_A$, the resulting cyclic slit $\mathrm{N}$-covers represent the
same points in $\mathcal H$.

\vspace{.25cm}

We continue with the following simple observation.

\begin{lemma}\label{lemma__n-measures}
Let $\mathrm{N}\geq 2$. The cyclic $\mathrm{N}$-cover surface $\tX$ has exactly $d$ ergodic
invariant probability measures for some divisor $d$ of $\mathrm{N}$. More precisely,
there exists a divisor $d\mid \mathrm{N}$ and ergodic invariant probability measures
$
\mu_0,\mu_1,\ldots,\mu_{d-1}
$ 
on $\tX$ such that
\begin{enumerate}
      \item each $\mu_j$ is absolutely continuous with respect to
    $\mathcal L_{\tX}$;

    \item the deck transformation $\varsigma$ cyclically permutes them:
    $$
    \varsigma_*\mu_j=\mu_{j+1},
    \qquad j\in\mathbb Z/d\mathbb Z;
    $$

    \item their sum is the normalized Lebesgue measure with total weight
    $d$:
    $$
    \mu_0+\mu_1+\cdots+\mu_{d-1}
    =
    d\,\mathcal L_{\tX}.
    $$
\end{enumerate}
In particular, the number of ergodic invariant probability measures on
$\tX$ is at most $\mathrm{N}$.
\end{lemma}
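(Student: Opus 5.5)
Let $\nu$ be any ergodic invariant probability measure for $\tv_t$ on $\tX$. The plan is to push everything down to $X$ with $\pi$, use unique ergodicity of $v_t$ there, and then use the deck group $\langle\varsigma\rangle\cong\bbZ/\mathrm{N}\bbZ$ to organize the ergodic measures upstairs. Both $\pi$ and $\varsigma$ are local translations commuting with the vertical flow: $\pi\circ\tv_t=v_t\circ\pi$ and $\varsigma\circ\tv_t=\tv_t\circ\varsigma$. Hence $\pi_*\nu$ is $v_t$-invariant. By unique ergodicity of $X$ (and since the flow is minimal, so no atoms survive on the finite set $\{P,Q\}\cup\Sigma$), $\pi_*\nu=\cL_X$, the normalized Lebesgue measure. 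Likewise $\varsigma_*\nu$ is again ergodic and invariant.

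\textbf{Absolute continuity.} Over the complement of the slit, the restriction of $\pi$ to each sheet is a translation isomorphism onto its image. Thus $\nu\le \sum_i (\pi|_{\tX_i})^*\pi_*\nu = \mathrm{N}\,\LtX$ on Borel sets avoiding the (measure-zero) lifted slit. Measures of the slit itself vanish because $\pi_*\nu=\cL_X$ gives it measure $0$. So $\nu\ll\LtX$, with density bounded by $\mathrm{N}$. This gives item (1) for every ergodic measure.

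\textbf{Decomposition of Lebesgue.} $\LtX$ is $\tv_t$-invariant and $\varsigma$-invariant. Decompose it into ergodic components. Since every ergodic measure is absolutely continuous with respect to $\LtX$, distinct ergodic measures are mutually singular, and all of them live inside $\LtX$. Concretely, let $A_\nu$ be the set of generic points of $\nu$; these sets are disjoint and invariant, and each has positive $\LtX$-measure, namely $\LtX(A_\nu)\ge \nu(A_\nu)/\mathrm{N}=1/\mathrm{N}$. Therefore there are at most $\mathrm{N}$ ergodic measures. Moreover, $\LtX$-almost every point is generic for some ergodic measure, by the ergodic decomposition and absolute continuity. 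So $\LtX=\sum_\nu \LtX(A_\nu)\,\nu|$, and on $A_\nu$ the restriction $\LtX|_{A_\nu}$ is invariant and hence a multiple of $\nu$ by ergodicity.

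\textbf{Orbit structure and divisibility.} $\varsigma$ permutes the finite set $\mathcal E$ of ergodic measures. We show this action is transitive. The union $B$ of the $A_{\varsigma^k\nu}$ over one orbit is $\varsigma$-invariant and flow-invariant. Hence $\pi(B)$ is $v_t$-invariant up to null sets, and $B=\pi^{-1}(\pi B)$. Since $\cL_X(\pi B)>0$, ergodicity downstairs gives $\cL_X(\pi B)=1$, so $B$ has full $\LtX$-measure and there is only one orbit. Let $d=|\mathcal E|$. Then $d$ divides $\mathrm{N}$ by the orbit–stabilizer theorem for the cyclic group action. Label the measures $\mu_j=\varsigma^j_*\mu_0$, which gives item (2). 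All $\LtX(A_{\mu_j})$ are equal because $\varsigma$ preserves $\LtX$, so each equals $1/d$. Since $\LtX|_{A_{\mu_j}}=\tfrac1d\mu_j$, summing gives item (3).

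The step needing the most care is the absolute continuity bound together with the claim that $\pi^{-1}(\pi B)=B$. The first requires handling the lifted slit and the branch points as null sets. The second follows from $\varsigma$-invariance of $B$, since $\pi^{-1}(x)$ is a single $\varsigma$-orbit.
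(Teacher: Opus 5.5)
Your proof is correct, but it follows a different route from the paper's.

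\textbf{The paper's route.} It never uses sets of generic points here. After showing $\pi_*\mu=\cL_X$ and $\mu\ll\LtX$, it bounds the number of measures by a fiber-counting argument: disjoint carriers $E_0,\dots,E_{\mathrm N}$, all projecting onto full-measure sets, would produce $\mathrm N+1$ points in a single fiber. Its central step is that a flow-invariant, $\varsigma$-invariant measure $\eta$ with $\pi_*\eta=\cL_X$ must equal $\LtX$; this is checked on evenly covered sets. Applied to the orbit average $\frac1d\sum_j\varsigma^j_*\mu$, this gives item (3) at once. Transitivity then follows because an ergodic measure outside the orbit would be singular to that average, i.e.\ to $\LtX$, contradicting absolute continuity.

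\textbf{Your route.} You prove the quantitative bound $\nu\le\mathrm N\,\LtX$, which makes the count immediate: the sets $A_\nu$ are disjoint, invariant, and each has Lebesgue measure at least $1/\mathrm N$. You get transitivity from ergodicity of $\cL_X$ applied to the $\varsigma$-saturated invariant set $B=\pi^{-1}(\pi B)$. You get equal weights from $\varsigma$-invariance of $\LtX$.

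\textbf{What each approach buys.} Your approach avoids the uniqueness-of-lift claim and yields more information: in the end, the density of $\mu_j$ with respect to $\LtX$ is exactly $d\cdot\mathbf 1_{A_{\mu_j}}$. The price is that item (3) rests on the identification $\LtX|_{A_\nu}=\LtX(A_\nu)\,\nu$. This holds not ``by ergodicity'' as you say, but because an invariant probability measure concentrated on $\nu$-generic points must equal $\nu$. To see this, integrate the Birkhoff averages of a continuous function against it and use dominated convergence. The paper's route is shorter for item (3).

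Your appeal to the ergodic decomposition of $\LtX$ is dispensable. Your transitivity step already shows $\LtX(B)=1$ directly, so the union of the $A_{\mu_j}$ has full measure without it.
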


\begin{proof}
First, we show that any normalized ergodic invariant probability measure $\mu$ on
$\tX$ projects to the normalized Lebesgue measure on $X$.

Let $B\subset X$ be invariant under the vertical flow $v_t$. Then
$\pi^{-1}(B)\subset \tX$ is invariant under the vertical flow $\tv_t$,
because
$$
\tv_{-t}\bigl(\pi^{-1}(B)\bigr)
=
\pi^{-1}(v_{-t}B)
=
\pi^{-1}(B).
$$
Since $\mu$ is ergodic on $\tX$, the measure $\mu(\pi^{-1}(B))$ is either
$0$ or $1$. By the definition of pushforward,
$$
\pi_*\mu(B)=\mu(\pi^{-1}(B)).
$$
Thus $\pi_*\mu$ is an ergodic invariant probability measure for the
vertical flow on $X$. Since the vertical flow on $X$ is uniquely ergodic,
we have
$$
\pi_*\mu=\mathcal L_X.
$$

Now we prove that $\mu$ is absolutely continuous with respect to
$\mathcal L_{\tX}$. Let $A\subset \tX$ be a Borel set with
$\mathcal L_{\tX}(A)=0$. Since $\pi$ is a finite-to-one local translation
away from the finitely many ramification points, the set $\pi(A)$ has
$\mathcal L_X$-measure zero. Therefore
$$
\mu(A)
\leq
\mu\bigl(\pi^{-1}(\pi(A))\bigr)
=
\pi_*\mu(\pi(A))
=
\mathcal L_X(\pi(A))
=
0.
$$
Hence
$$
\mu\ll \mathcal L_{\tX}.
$$

Next, we show that there can be at most $\mathrm{N}$ ergodic invariant probability
measures on $\tX$. Suppose, for contradiction, that there are $\mathrm{N}+1$
distinct ergodic invariant probability measures
$$
\mu_0,\mu_1,\ldots,\mu_\mathrm{N}.
$$
Distinct ergodic probability measures are mutually singular. Hence there
exist pairwise disjoint Borel sets
$$
E_0,E_1,\ldots,E_\mathrm{N}\subset \tX
$$
such that
$$
\mu_j(E_j)=1
\qquad\text{for each }j=0,\,1,\,\ldots,\,\mathrm{N}.
$$
Since each $\mu_j$ projects to $\mathcal L_X$, each projection $\pi(E_j)$
has full Lebesgue measure in $X$:
$$
1
=
\mu_j(E_j)
\leq
\mu_j\bigl(\pi^{-1}(\pi(E_j))\bigr)
=
\pi_*\mu_j(\pi(E_j))
=
\mathcal L_X(\pi(E_j)).
$$
Therefore
$$
\mathcal L_X\left(\bigcap_{j=0}^\mathrm{N} \pi(E_j)\right)=1.
$$
In particular, we can choose a point
$$
x\in \bigcap_{j=0}^\mathrm{N} \pi(E_j)
$$
which is not equal to $P$ or $Q$. For each $j$, choose a point
$$
y_j\in E_j\cap \pi^{-1}(x).
$$
Since the sets $E_j$ are pairwise disjoint, the points
$y_0,y_1,\ldots,y_\mathrm{N}$ are distinct. But $x\notin\{P,Q\}$, so the fiber
$\pi^{-1}(x)$ contains exactly $\mathrm{N}$ points. This is impossible. Hence there
are at most $\mathrm{N}$ ergodic invariant probability measures on $\tX$.

Now consider the action of the deck transformation $\varsigma$ on the
finite set of ergodic invariant probability measures. Since $\varsigma$
commutes with the vertical flow, the pushforward $\varsigma_*\mu$ of an
ergodic invariant probability measure is again an ergodic invariant
probability measure.

We first claim that if an invariant probability measure $\eta$ on $\tX$ is
both $\varsigma$-invariant and satisfies
$$
\pi_*\eta=\mathcal L_X,
$$
then
$$
\eta=\mathcal L_{\tX}.
$$
Indeed, let $U\subset X\setminus\{P,Q\}$ be a sufficiently small evenly
covered open set. Then
$$
\pi^{-1}(U)=U_0\sqcup U_1\sqcup\cdots\sqcup U_{\mathrm{N}-1},
$$
where $\varsigma(U_j)=U_{j+1}$. Since $\eta$ is $\varsigma$-invariant, all
the numbers $\eta(U_j)$ are equal. Since $\pi_*\eta=\mathcal L_X$, their
sum is
$$
\eta(\pi^{-1}(U))=\mathcal L_X(U).
$$
Therefore
$$
\eta(U_j)=\frac{1}{\mathrm{N}}\mathcal L_X(U)
=
\mathcal L_{\tX}(U_j).
$$
Such sets generate the Borel $\sigma$-algebra away from the ramification
points, and the ramification points have zero measure for both $\eta$ and
$\mathcal L_{\tX}$. Hence
$$
\eta=\mathcal L_{\tX}.
$$

Now let $\mu$ be one ergodic invariant probability measure on $\tX$, and
let its orbit under the deck transformation be
$$
\mu,\ \varsigma_*\mu,\ \ldots,\ \varsigma_*^{d-1}\mu,
$$
where $d$ is the smallest positive integer such that
$$
\varsigma_*^d\mu=\mu.
$$
Since $\varsigma^\mathrm{N}=\operatorname{id}$, we have $d\mid \mathrm{N}$.

Consider the average over this orbit:
$$
\eta
=
\frac{1}{d}\sum_{j=0}^{d-1}\varsigma_*^j\mu.
$$
This measure is invariant under the vertical flow, invariant under
$\varsigma$, and projects to $\mathcal L_X$. Therefore, by the claim above,
$$
\eta=\mathcal L_{\tX}.
$$

We now show that this orbit contains all ergodic invariant probability
measures on $\tX$. Suppose, for contradiction, that $\nu$ is another
ergodic invariant probability measure not belonging to the orbit of $\mu$.
Then $\nu$ is distinct from each of
$$
\mu,\ \varsigma_*\mu,\ \ldots,\ \varsigma_*^{d-1}\mu.
$$
Distinct ergodic probability measures are mutually singular, so $\nu$ is
singular with respect to each measure in this finite orbit. Hence $\nu$ is
singular with respect to their average $\eta$. But $\eta=\mathcal L_{\tX}$,
while we have already proved that every ergodic invariant probability
measure on $\tX$ is absolutely continuous with respect to
$\mathcal L_{\tX}$. This is impossible. Therefore the orbit of $\mu$
contains all ergodic invariant probability measures.

Thus the ergodic invariant probability measures on $\tX$ are precisely
$$
\mu_0,\mu_1,\ldots,\mu_{d-1},
$$
where $d\mid \mathrm{N}$, and after relabeling we have
$$
\varsigma_*\mu_j=\mu_{j+1},
\qquad j\in\mathbb Z/d\mathbb Z.
$$
Finally, since
$$
\frac{1}{d}\sum_{j=0}^{d-1}\mu_j
=
\mathcal L_{\tX},
$$
we obtain
$$
\mu_0+\mu_1+\cdots+\mu_{d-1}
=
d\,\mathcal L_{\tX}.
$$
This completes the proof.
\end{proof}

This leads to a natural question:

\textbf{When does $\tX$ have one ergodic measure and when does it have more?}

\section{Measure-generic and almost measure-generic points on $\tX$}\label{UE-sect2}

\begin{lemma}\label{lemma__reverse-flow}
    Consider the inverse vertical flow $\widetilde{v}^{-}_{t} = \widetilde{v}_{-t}$ on the translation surface $\tX$. Any measure on $\tX$ that is ergodic and invariant under the action of the flow $\widetilde{v}^{-}_{t}$ is ergodic and invariant under the action of the flow $\widetilde{v}_{t}$, and vice versa.
\end{lemma}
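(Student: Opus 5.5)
The plan is to verify invariance and ergodicity separately, using only the group structure of the flow: $\tv^-_t=\tv_{-t}$ and $\tv_t\circ\tv_{-t}=\operatorname{id}$, wherever the flow is defined. The statement is symmetric under exchanging $\tv_t$ and $\tv^-_t$, since $(\tv^-)^-=\tv$. So it suffices to prove one direction: a measure $\mu$ ergodic and invariant for $\tv_t$ is ergodic and invariant for $\tv^-_t$.

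\emph{Invariance.} Suppose $\mu(\tv_t^{-1}A)=\mu(A)$ for every Borel $A\subset\tX$ and every $t\in\bbR$. Then
$$(\tv^-_t)^{-1}A=\tv_{-t}^{-1}A=\tv_t(A)=\tv_t^{-1}\bigl(\tv_{-t}^{-1}\bigl(\tv_t(A)\bigr)\bigr).$$
More directly, applying invariance at time $-t$ gives $\mu(\tv_{-t}^{-1}A)=\mu(A)$. If one prefers to phrase invariance only for $t\ge 0$, apply it to the set $B=\tv_t(A)$: this gives $\mu(A)=\mu(\tv_t^{-1}B)=\mu(B)=\mu(\tv_t A)=\mu\bigl((\tv^-_t)^{-1}A\bigr)$.

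\emph{Ergodicity.} A set $A$ is invariant for $\tv^-$ exactly when $\tv_{-t}^{-1}A=A$ for all $t$, that is, $\tv_t A=A$ for all $t$. Applying $\tv_t^{-1}$ to both sides gives $A=\tv_t^{-1}A$. Hence the families of invariant sets for $\tv$ and for $\tv^-$ coincide. If one uses the mod-$\mu$ notion of invariance, the same holds up to null sets, because $\tv_t$ is a measure-preserving bijection by the invariance step. Therefore every $\tv^-$-invariant set has $\mu$-measure $0$ or $1$.

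The only genuine obstacle is technical. The vertical flow on $\tX$ is undefined on the countably many trajectories hitting singularities, including the ramification points $\widetilde P,\widetilde Q$ and the cone points over $\Sigma$. I would handle this by restricting to the full-measure set $\tX'$ of points whose bi-infinite trajectory avoids these points. Such trajectories form a countable union of vertical separatrices, hence a Lebesgue-null set, and they are null for any invariant measure: by Lemma \ref{lemma__n-measures} the ergodic measures are absolutely continuous, and in general a separatrix carries no finite invariant measure except possibly atoms at fixed points, which do not occur here. On $\tX'$ the flow is a genuine $\bbR$-action by bijections, so the identities above hold verbatim.
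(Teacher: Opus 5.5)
Your proof is correct and is essentially the paper's argument: reduce to one direction by the symmetry $(\tv^-)^-=\tv$, get invariance from the group law $\tv_t\circ\tv_{-t}=\mathrm{id}$, and get ergodicity by noting that the $\tv^-$-invariant sets are exactly the $\tv$-invariant sets. Your extra paragraph restricting to points with bi-infinite nonsingular orbits handles a technicality the paper silently ignores; note that Lemma~\ref{lemma__n-measures} only covers $\tv$-ergodic measures, so in the ``vice versa'' direction it is your direct separatrix argument, not that lemma, that does the work.
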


\begin{proof}
The argument in both directions is essentially identical, so it suffices to give a proof of only one of them. Suppose that a measure $\mu$ is ergodic and invariant on $\tX$ under the action of the flow $\widetilde{v}_{t}$. We will show that it is also invariant and ergodic under the action of the flow $\widetilde{v}^{-}_{t}$. Let $A$ be a Borel subset of $\tX$, and let us fix a time $t > 0$. To prove invariance, we note that 
\begin{equation}
    \mu(\widetilde{v}^{-}_{t}A) = \mu(\widetilde{v}_{t}(\widetilde{v}^{-}_{t}A)) = \mu(A).
\end{equation}
To prove ergodicity, consider a Borel $A \subset \tX$ that is invariant under the inverse vertical flow $\widetilde{v}^{-}_{t}$. This means that $\widetilde{v}^{-}_{t}A = A$, that is, $\widetilde{v}_{-t}(A) = A$. Applying $\widetilde{v}_{t}$ to both sides, we get $A = \widetilde{v}_{t}A$, so $A$ is also invariant under the original vertical flow. Since $\mu$ is ergodic for $\widetilde{v}_{t}$, the measure $\mu(A) \in \{0, 1\}$. Hence $\mu$ is ergodic for $\widetilde{v}^{-}_{t}$ as well.
\end{proof}

\subsection{Measure-generic points on $\tX$}\label{UE-sect2.1}

Let $\widetilde{x}$ be a non-singular point on $\tX$, let $f$ be a measurable function on $\tX$, and let $t>0$. We introduce the following notation:
\begin{equation}\label{eq__int-def}
\Iup[t]{f}{\widetilde{x}}:=\frac{1}{t}\int_{0}^{t} f(\widetilde{v}_{s}\widetilde{x})\,\rd s
\quad\quad\text{and}\quad\quad
\Idown[t]{f}{\widetilde{x}}:=\frac{1}{t}\int_{0}^{t} f(\widetilde{v}_{-s}\widetilde{x})\,\rd s.
\end{equation}

Let $m$ be an ergodic $\widetilde{v}$-invariant probability measure on
$\tX$. Suppose that the function $f$ is $m$-integrable on $\tX$. Denote
\begin{equation}\label{eq__Av-def}
    \Av_{m}(f):= \int_{\tX} f(\widetilde{x})\,\rd m(\widetilde{x}).
\end{equation}

Let $\mathcal{C}(\tX)$ be the space of all real-valued continuous functions
on $\tX$. Since $\tX$ is compact, any $f \in \mathcal{C}(\tX)$ is bounded,
hence integrable with respect to any finite regular Borel measure $m$ on
$\tX$. Thus, any such measure produces a well-defined linear functional
\begin{equation}
\begin{aligned}
    \Lambda_m:\mathcal{C}(\tX)&\longrightarrow \mathbb{R} \\
    f&\longmapsto \Av_m(f).
\end{aligned}
\end{equation}
If $\tX$ carries several distinct ergodic measures, then these averages can
be used to distinguish between them.

\begin{lemma}\label{lemma__core-1-Lipschitz-distinct-averages}
Suppose that the translation surface $\tX$ has $d>1$ distinct ergodic invariant
probability measures
$$
\mu_0,\mu_1,\ldots,\mu_{d-1}.
$$
Then there exists a $1$-Lipschitz function $\mathfrak h$ on $\tX$ such that
the numbers
$$
\Av_{\mu_0}(\mathfrak h),\,
\Av_{\mu_1}(\mathfrak h),\,
\ldots,\,
\Av_{\mu_{d-1}}(\mathfrak h)
$$
are pairwise distinct.
\end{lemma}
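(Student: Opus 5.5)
We will work with the flat metric on $\tX$, which is a compact length space. The first step separates any two measures at a time. For each pair $j\neq k$, the measures $\mu_j,\mu_k$ are distinct finite regular Borel measures on the compact metric space $\tX$. Lipschitz functions are uniformly dense in $\mathcal{C}(\tX)$ by Stone--Weierstrass: they form a subalgebra containing constants and separating points via $x\mapsto \operatorname{dist}(x,y)$. Hence there is a Lipschitz $g_{jk}$ with $\Av_{\mu_j}(g_{jk})\neq\Av_{\mu_k}(g_{jk})$. Rescaling by a positive constant, we may assume each $g_{jk}$ is $1$-Lipschitz.

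The second step combines these finitely many functions into one. Consider the finite-dimensional space $V$ spanned by the functions $g_{jk}$ for $0\le j<k\le d-1$. For each pair $(j,k)$, the linear functional
$$\ell_{jk}(h)=\Av_{\mu_j}(h)-\Av_{\mu_k}(h)$$
on $V$ is nonzero, since $\ell_{jk}(g_{jk})\neq 0$. Its kernel is therefore a proper subspace of $V$. A finite-dimensional real vector space is not a finite union of proper subspaces. So there exists $h\in V$ with $\ell_{jk}(h)\neq 0$ for all pairs simultaneously.

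Concretely, one can take $h=\sum c_{jk}g_{jk}$ with coefficients avoiding finitely many hyperplanes in coefficient space, for instance generic small coefficients. Such an $h$ is Lipschitz with constant at most $\sum|c_{jk}|$. Setting $\mathfrak h=h/\max(1,\sum|c_{jk}|)$ gives a $1$-Lipschitz function. Positive scaling preserves the nonvanishing of every $\ell_{jk}$, so the averages $\Av_{\mu_j}(\mathfrak h)$ are pairwise distinct.

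The only step requiring care is the density of Lipschitz functions. This needs $\tX$ to be a genuine compact metric space, including near the cone points $\widetilde P,\widetilde Q$ and the singularities. The flat metric extends continuously across them, so that case is fine. Alternatively, one can avoid Stone--Weierstrass entirely. If $\mu_j(E)\neq\mu_k(E)$ for some closed set $E$, then the functions $\max(0,1-n\operatorname{dist}(\cdot,E))$ are Lipschitz and decrease to $\mathbf 1_E$, so by dominated convergence some $n$ already separates $\mu_j$ and $\mu_k$. Such a closed set exists by regularity of the measures.
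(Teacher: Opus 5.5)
Your proof is correct and follows the paper's argument: Stone--Weierstrass density of Lipschitz functions to separate each pair $\mu_j,\mu_k$, then choosing $h$ outside the finite union of the proper kernels of the functionals $\Av_{\mu_j}-\Av_{\mu_k}$, and finally rescaling to Lipschitz constant $1$. The differences are minor: you work in the finite-dimensional span of the $g_{jk}$ rather than all of $\Lip(\tX)$, which makes the Lipschitz bound on $h$ explicit. Your alternative that avoids Stone--Weierstrass, separating the measures on a closed set and approximating its indicator by Lipschitz functions, is also valid.
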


\begin{proof}
Since $\tX$ is a compact metric space, every Lipschitz function on $\tX$ is
bounded, and consequently integrable with respect to each of the probability
measures $\mu_0,\ldots,\mu_{d-1}$.

Let $\Lip(\tX)$ and $\Lip_1(\tX)$ be the spaces of all Lipschitz functions
and all $1$-Lipschitz functions on $\tX$, respectively. We first recall that
$\Lip(\tX)$ is dense in $\mathcal C(\tX)$ in the supremum norm.

Indeed, $\Lip(\tX)$ is closed under addition and scalar multiplication, and
since $\tX$ is compact, the product of two Lipschitz functions is again
Lipschitz. Hence $\Lip(\tX)$ is a subalgebra of $\mathcal C(\tX)$. It
contains the constant functions. It also separates points: for any fixed
point $\widetilde{x}\in\tX$, the function
$$
\widetilde{y}\longmapsto d(\widetilde{x},\widetilde{y})
$$
is Lipschitz and distinguishes $\widetilde{x}$ from any other point
$\widetilde{y}\neq \widetilde{x}$. Therefore, by the Stone--Weierstrass
Theorem, $\Lip(\tX)$ is dense in $\mathcal C(\tX)$.

Now fix two distinct indices $a,b\in\{0,\ldots,d-1\}$ with $a\neq b$. Since
$\mu_a$ and $\mu_b$ are distinct finite regular Borel measures on $\tX$, the
Riesz--Markov--Kakutani Representation Theorem implies that they cannot
define the same continuous linear functional on $\mathcal C(\tX)$. Hence
there exists some $f_{ab}\in\mathcal C(\tX)$ such that
$$
\Av_{\mu_a}(f_{ab})\neq \Av_{\mu_b}(f_{ab}).
$$
Since $\Lip(\tX)$ is dense in $\mathcal C(\tX)$, there exists a Lipschitz
function $g_{ab}\in\Lip(\tX)$ such that
$$
\Av_{\mu_a}(g_{ab})\neq \Av_{\mu_b}(g_{ab}).
$$
Thus, for every pair $a\neq b$, the linear functional
$$
T_{ab}:\Lip(\tX)\longrightarrow \mathbb R,
\qquad
T_{ab}(h)=\Av_{\mu_a}(h)-\Av_{\mu_b}(h),
$$
is not identically zero.

For each pair $a<b$, define
$$
H_{ab}:=\ker T_{ab}
=
\left\{
h\in\Lip(\tX):
\Av_{\mu_a}(h)=\Av_{\mu_b}(h)
\right\}.
$$
Since $T_{ab}$ is not identically zero, each $H_{ab}$ is a proper linear
subspace of $\Lip(\tX)$.

There are only finitely many pairs $a<b$. A finite union of proper linear
subspaces of a real vector space cannot be the whole vector space. Therefore
we can choose some
$$
h\in\Lip(\tX)
$$
such that
$$
h\notin \bigcup_{1\leq a<b\leq r} H_{ab}.
$$
Equivalently, for every pair $a<b$, we have
$$
\Av_{\mu_a}(h)\neq \Av_{\mu_b}(h).
$$
Thus the numbers
$$
\Av_{\mu_0}(h),\ldots,\Av_{\mu_{d-1}}(h)
$$
are pairwise distinct.

Let $C_h$ be a Lipschitz constant for $h$. Define
$$
\mathfrak h := \frac{h}{\max\{1,C_h\}}.
$$
Then $\mathfrak h$ is $1$-Lipschitz. Moreover, rescaling by a positive
constant does not create equalities among the averages, so
$$
\Av_{\mu_a}(\mathfrak h)\neq \Av_{\mu_b}(\mathfrak h)
$$
for every pair $a\neq b$. Hence
$$
\Av_{\mu_0}(\mathfrak h),\ldots,\Av_{\mu_{d-1}}(\mathfrak h)
$$
are pairwise distinct.
\end{proof}

We now recall one of the fundamental ergodic theorems.

\begin{theorem}[Birkhoff--Khinchin Pointwise Ergodic Theorem for Flows --- Corollary~25.9, p.~517, in \cite{Kallenberg2021}]\label{thm__Birkhoff-Khinchin}
Let $(X,\,\mathcal{B},\,m)$ be a probability space and let $\{\varphi_t\}_{t\in\mathbb{R}}$ be a jointly-measurable, $m$-preserving flow (i.e. $\varphi_{t+s}=\varphi_t\circ\varphi_s$, $\varphi_0=\mrm{id}$, and $m(\varphi_t^{-1}A)=m(A)$ for all $A\in\mathcal{B}$ and $t\in\mathbb{R}$). Then, for $m$-almost every \ $x\in X$, the integrals
$$\frac{1}{T}\int_{0}^{T}f(\varphi_t(x))\,\rd t$$
converge to the conditional expectation of $f$ with respect to the $\sigma$-algebra $\mathcal{I}$ of $\{\varphi_t\}$-invariant sets
$$
\mathbb{E}\!\left(f\,\middle|\,\mathcal{I}\right)(x),
$$
and the convergence also holds in $L^1(m)$. In particular, if the flow is ergodic, then for every $f\in L^1(m)$,
$$
\lim_{T\to\infty} \frac{1}{T}\int_{0}^{T}f(\varphi_t(x))\,\rd t 
\;=\;\Av_m(f)\quad\text{for }m\text{-almost every\ }x .
$$
In our notation (if we apply this to the vertical flow), 
$$\lim_{T\to\infty} \Iup[T]{f}{\widetilde{x}} = \Av_{m}(f).$$
\end{theorem}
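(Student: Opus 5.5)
We reduce the theorem to the discrete-time ergodic theorem, together with a conditional-expectation argument for the general case. This is the standard route to Corollary~25.9 of \cite{Kallenberg2021}. First we fix $f\in L^1(m)$ and introduce the time-one map $\varphi_1$ and the function
$$
F(x):=\int_0^1 f(\varphi_s(x))\,\rd s .
$$
Joint measurability and Fubini show that $F$ is measurable. Invariance of $m$ gives $\|F\|_{L^1}\le \|f\|_{L^1}$, so $F\in L^1(m)$. For integer $n$ we have
$$
\int_0^n f(\varphi_t x)\,\rd t=\sum_{k=0}^{n-1}F(\varphi_1^k x).
$$
The discrete Birkhoff theorem for the $m$-preserving map $\varphi_1$ then gives almost-everywhere and $L^1$ convergence of $\frac1n\int_0^n f(\varphi_t x)\,\rd t$ to $\mathbb E(F\mid\mathcal I_1)$, where $\mathcal I_1$ is the $\varphi_1$-invariant $\sigma$-algebra.

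Second, we pass from integer times to real times $T$. We apply the same reduction to $|f|$ and set $G(x)=\int_0^1|f(\varphi_s x)|\,\rd s\in L^1$. Then $G(\varphi_1^n x)/n\to0$ almost everywhere, by Borel--Cantelli applied to $\sum_n m(G>\varepsilon n)\le \|G\|_1/\varepsilon$ and invariance. For $n\le T<n+1$ this bounds the difference between $\frac1T\int_0^T f(\varphi_t x)\,\rd t$ and $\frac1n\int_0^n f(\varphi_t x)\,\rd t$ by $G(\varphi_1^n x)/n$ plus a term of order $\frac1n\cdot\frac1n\sum_{k<n}G(\varphi_1^k x)$, and both tend to $0$. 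The $L^1$ convergence transfers the same way.

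Third, and this is the step I expect to need the most care, we identify the limit as $\mathbb E(f\mid\mathcal I)$, where $\mathcal I$ is the $\sigma$-algebra of sets invariant under the whole flow. Call the limit $\bar f$. It is invariant under every $\varphi_s$, because shifting the starting point by $s$ changes $\int_0^T$ by at most two integrals of length $s$, and these vanish after division by $T$ by the estimate of the previous step. Hence $\bar f$ is $\mathcal I$-measurable, modulo null sets; to handle those, replace $\bar f$ by its $\limsup$ version, which is everywhere invariant. For $A\in\mathcal I$, each map $x\mapsto f(\varphi_t x)\mathbf 1_A(x)$ has integral $\int_A f\,\rd m$ by invariance of $A$ and of $m$. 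So $\int_A \frac1T\int_0^T f\circ\varphi_t\,\rd t\,\rd m=\int_A f\,\rd m$, and $L^1$ convergence gives $\int_A\bar f\,\rd m=\int_A f\,\rd m$. Therefore $\bar f=\mathbb E(f\mid\mathcal I)$.

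Finally, if the flow is ergodic then $\mathcal I$ is trivial modulo $m$, so $\mathbb E(f\mid\mathcal I)=\Av_m(f)$ almost everywhere. Specializing to $\varphi_t=\tv_t$ on $\tX$, which is jointly continuous away from the singularities and hence jointly measurable, gives $\Iup[T]{f}{\tx}\to\Av_m(f)$ for $m$-almost every $\tx$.
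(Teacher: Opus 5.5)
The paper gives no proof of this statement. It quotes Kallenberg's Corollary~25.9 and uses it as a black box, so your proposal is a different route in the trivial sense: it is a correct, self-contained derivation from the discrete Birkhoff theorem.

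The three main steps are sound:
\begin{itemize}
\item the time-one reduction through $F=\int_0^1 f\circ\varphi_s\,ds$;
\item the interpolation to real $T$ via $G\circ\varphi_1^n/n\to 0$;
\item the transfer of the $L^1$ convergence.
\end{itemize}

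Compared with the bare citation, your route exposes a real subtlety. The discrete theorem only yields $\mathbb E(F\mid\mathcal I_1)$, and $\mathcal I_1$ can be strictly larger than $\mathcal I$. For the unit-speed flow on $\mathbb R/\mathbb Z$, for example, the time-one map is the identity. Your third step identifies the limit through flow-invariance and the identity $\int_A f\circ\varphi_t\,dm=\int_A f\,dm$ for $A\in\mathcal I$. That step is therefore necessary, not cosmetic, and you were right not to equate $\mathcal I_1$ with $\mathcal I$.

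Two places deserve one more line each.

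First, the $\limsup$ modification. Write $A_T(x)=\frac1T\int_0^T f(\varphi_t x)\,dt$, and let $W$ be the set of $x$ with $\int_a^b|f(\varphi_t x)|\,dt<\infty$ for all $a<b$. On $W$ one has $A_T(\varphi_s x)=\tfrac{T+s}{T}A_{T+s}(x)+o(1)$, so the extended-real $\limsup$ is exactly invariant there. Moreover, $W$ is itself exactly flow-invariant and of full measure. So define the function to be $0$ off $W$, rather than claiming it is invariant everywhere.

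Second, the specialization to $\tX$. The vertical flow is defined for all times only off the finitely many vertical separatrices through cone points, including $\widetilde P$ and $\widetilde Q$. This set is flow-invariant and null for the absolutely continuous ergodic measures of Lemma~\ref{lemma__n-measures}. So the theorem should be applied on its complement, not justified by joint continuity alone.
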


As an immediate consequence of Lemma \ref{lemma__reverse-flow}, we obtain the following.
\begin{corollary}\label{cor__backward-flow}
Let $m$ be an ergodic invariant measure on $\tX$. Then the conclusion of Theorem \ref{thm__Birkhoff-Khinchin} holds for both the forward and backward vertical flows on $\tX$.
\end{corollary}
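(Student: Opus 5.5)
The statement is a direct transfer of the Birkhoff--Khinchin theorem from the forward flow to the backward flow. The plan is to combine Theorem~\ref{thm__Birkhoff-Khinchin} with Lemma~\ref{lemma__reverse-flow}.

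For the forward flow there is nothing to prove. We apply Theorem~\ref{thm__Birkhoff-Khinchin} with $\varphi_t=\tv_t$ on $(\tX,\mathcal B,m)$. Joint measurability holds because $(t,\tx)\mapsto \tv_t\tx$ is continuous off the finitely many singular orbits, which form a set of $m$-measure zero since $m\ll\LtX$ by Lemma~\ref{lemma__n-measures}. So for every $f\in L^1(m)$ and $m$-a.e.\ $\tx$, we get $\Iup[T]{f}{\tx}\to\Av_m(f)$.

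For the backward flow, set $\varphi_t:=\tv^-_t=\tv_{-t}$. This is again a flow: $\varphi_{t+s}=\tv_{-t-s}=\varphi_t\circ\varphi_s$, $\varphi_0=\mathrm{id}$, and it is jointly measurable because it is the composition of the forward flow map with $(t,\tx)\mapsto(-t,\tx)$. By Lemma~\ref{lemma__reverse-flow}, $m$ is invariant and ergodic for $\varphi_t$. Hence Theorem~\ref{thm__Birkhoff-Khinchin} applies verbatim. For $f\in L^1(m)$ and $m$-a.e.\ $\tx$,
$$
\frac1T\int_0^T f(\tv_{-s}\tx)\,\rd s=\Idown[T]{f}{\tx}\longrightarrow \Av_m(f).
$$
The $L^1$ convergence and the conditional-expectation statement carry over as well. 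The invariant $\sigma$-algebra of $\tv^-$ coincides with that of $\tv$, since $\tv_{-t}A=A$ if and only if $\tv_tA=A$.

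Intersecting the two full-measure sets gives, for each fixed $f$, an $m$-full set of points that are simultaneously forward- and backward-generic for $f$. The only point needing any care is that the invariance in Theorem~\ref{thm__Birkhoff-Khinchin} is stated as $m(\varphi_t^{-1}A)=m(A)$ for all $t\in\mathbb R$. Because $\varphi_t^{-1}=\varphi_{-t}$ is a forward-flow map, this is immediate from the $\tv$-invariance of $m$, which is exactly the content of Lemma~\ref{lemma__reverse-flow}. No step is a genuine obstacle.
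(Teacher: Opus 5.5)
Your proposal is correct and follows the paper's route: the paper states this corollary as an immediate consequence of Lemma~\ref{lemma__reverse-flow}, which amounts to applying Theorem~\ref{thm__Birkhoff-Khinchin} to the reversed flow $\tv_{-t}$, for which $m$ is invariant and ergodic. Your remarks on joint measurability, on the measure-zero singular leaves, and on the equality of the invariant $\sigma$-algebras make explicit details that the paper leaves unstated.
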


We now define the relevant notions of generic behavior of points under the vertical flow.

\begin{definition}[Generic points]

Let $m$ be an ergodic invariant measure on $\tX$, and let $\widetilde{x} \in \tX$ be a non-singular point whose backward and forward vertical trajectories are defined for all times $t\in\mathbb{R}$.

We say that
\begin{enumerate}
    \item[\textbullet] $\widetilde{x}$ is \emph{forward $m$-generic} if for any $f \in \mathcal{C}(\tX)$ we have
    \begin{equation}
        \lim_{T\to\infty} \Iup[T]{f}{\widetilde{x}} = \Av_{m}(f);
    \end{equation}
    \item[\textbullet] $\widetilde{x}$ is \emph{backward $m$-generic} if for any $f \in \mathcal{C}(\tX)$ we have
    \begin{equation}
        \lim_{T\to\infty} \Idown[T]{f}{\widetilde{x}} = \Av_{m}(f);
    \end{equation}
    \item[\textbullet] 
    $\widetilde x$ is \emph{forward generic} if it is forward $m$-generic for some ergodic invariant probability measure $m$ on $\tX$;

    \item[\textbullet]   $\widetilde x$ is \emph{backward generic} if it is backward $m$-generic for some ergodic invariant probability measure $m$ on $\tX$;

    \item[\textbullet] $\widetilde{x}$ is \emph{totally $m$-generic} if it is both forward and backward $m$-generic for some ergodic invariant probability measure $m$ on $\tX$;

    \item[\textbullet] $\widetilde{x}$ is \emph{totally generic} if $\tX$ has multiple ergodic measures and the point is both backward and forward $m$-generic for at least one of them.
    
\end{enumerate}
\end{definition}

\begin{corollary} \label{cor__no-double-genericity}
If a non-critical point $\tx$ on $\tX$ is backward/forward/totally $\mu$-generic for one ergodic measure, then it cannot be backward/forward/totally $\nu$-generic for another ergodic measure, and vice versa. 
\end{corollary}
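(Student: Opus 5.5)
The plan is to separate the claim into the \emph{same-direction} part and the \emph{cross-direction} part. The cross-direction part is what ``and vice versa'' refers to: a non-critical point that is backward $\mu$-generic cannot be forward $\nu$-generic for $\nu\neq\mu$.

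\textbf{Same direction.} This part is immediate.
\begin{itemize}
\item If $\tx$ were forward $\mu$-generic and forward $\nu$-generic, then for every $f\in\mathcal C(\tX)$ the limit $\lim_{T\to\infty}\Iup[T]{f}{\tx}$ would equal both $\Av_\mu(f)$ and $\Av_\nu(f)$.
\item By Riesz--Markov--Kakutani this forces $\mu=\nu$, exactly as in the proof of Lemma \ref{lemma__core-1-Lipschitz-distinct-averages}.
\item The backward case is the same argument with $\Idown[T]{f}{\tx}$.
\item The ``totally'' case follows from either of the two.
\end{itemize}

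\textbf{Cross direction, setup.} I would work with the sets of generic points.
\begin{itemize}
\item Let $F_m$ and $B_m$ be the sets of forward and backward $m$-generic points, for each ergodic $m\in\{\mu_0,\dots,\mu_{d-1}\}$ of Lemma \ref{lemma__n-measures}.
\item Each $F_m$ and $B_m$ is $\tv$-invariant, because shifting the starting point changes $\Iup[T]{f}{\tx}$ by $O(\|f\|_\infty/T)$.
\item By the same-direction step, the $F_m$ are pairwise disjoint, and so are the $B_m$.
\item Using a countable dense family in $\mathcal C(\tX)$ together with Theorem \ref{thm__Birkhoff-Khinchin} and Corollary \ref{cor__backward-flow}, we get $m(F_m)=m(B_m)=1$.
\end{itemize}

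\textbf{Cross direction, measure-zero statement.}
\begin{itemize}
\item The set of points with mismatched genericity is $E=\bigcup_{\mu\neq\nu}(B_\mu\cap F_\nu)$.
\item For each ergodic $m$: if $m\neq\mu$ then $m(B_\mu)\le 1-m(B_m)=0$; if $m=\mu$ then $m(F_\nu)=0$ since $\nu\neq m$. Hence $m(E)=0$.
\item Summing over $m$ and using $\sum_j\mu_j=d\,\LtX$ from Lemma \ref{lemma__n-measures} gives $\LtX(E)=0$.
\end{itemize}
So mismatched genericity can occur only on a Lebesgue-null, flow-invariant set of orbits.

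\textbf{Main obstacle.} The corollary claims more than this: it is stated for \emph{every} non-critical point. Invariance plus absolute continuity cannot rule out a single heteroclinic-type orbit, which would be backward $\mu$-generic and forward $\nu$-generic. To close this gap I would try to exploit the structure of $\tX$ in the following order.
\begin{itemize}
\item First, the orbit of $\tx$ projects under $\pi$ to a bi-infinite vertical orbit of $X$. By unique ergodicity of $X$, that projected orbit is both forward and backward $\LtX$-generic downstairs.
\item Second, the sheet index is the only extra information upstairs, and it changes only at crossings of the slit. So the mismatch would have to be encoded in the asymptotic distribution of sheet indices along the two half-orbits.
\item Third, I would try to show that these asymptotic distributions are governed by a cocycle over the uniquely ergodic base. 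Its forward and backward asymptotics should be related through the deck action $\varsigma$, which permutes the $\mu_j$ cyclically.
\end{itemize}
If this last step fails, I would fall back to stating and using the corollary in its almost-everywhere form. That weaker form still appears to be what is needed to apply it later to generic points chosen inside the lifted disks.
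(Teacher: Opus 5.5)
Your same-direction argument is already a complete proof of the corollary, and it is essentially the paper's. The paper also argues that one orbit average cannot converge to two different values. It uses Lemma \ref{lemma__core-1-Lipschitz-distinct-averages} to produce a single $1$-Lipschitz $h$ with $\Av_\mu(h)\neq\Av_\nu(h)$, whereas you invoke Riesz--Markov--Kakutani directly. In both versions the backward case is identical and the ``totally'' case follows from the forward one.

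Everything after that comes from misreading ``and vice versa''. The slashes pair the directions: backward with backward, forward with forward, totally with totally. ``Vice versa'' only says that the roles of $\mu$ and $\nu$ may be interchanged; the paper's proof ends with exactly this remark.

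The cross-direction claim (backward $\mu$-generic and forward $\nu$-generic) is not asserted by this corollary. The paper proves it only in almost-everywhere form, as Corollary \ref{cor__measure-0-no-symmetry}, and your measure-zero computation is essentially that proof. The paper phrases it as $\bigcup_i(F_i\cap B_i)$ having full $\LtX$-measure, with mismatched points lying outside this set.

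So your ``main obstacle'' is not an obstacle for this statement, and the cocycle program is unnecessary. As you yourself note, the available tools (invariance and absolute continuity) only control Lebesgue-null sets, so they could not give a pointwise cross-direction result anyway. Your remark that $m(F_m)=m(B_m)=1$ needs a countable dense family of test functions is correct, and it is a detail the paper passes over.
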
 
\begin{proof}
We first prove the forward statement. Suppose, for contradiction, that $\tx$ is both forward $\mu$-generic and forward $\nu$-generic. By Lemma \ref{lemma__core-1-Lipschitz-distinct-averages}, there exists a $1$-Lipschitz function $h\colon \tX\to\mathbb R$ such that
$$
\Av_\mu(h)\neq \Av_\nu(h).
$$
Since $\tX$ is compact, every Lipschitz function on $\tX$ is bounded, hence $h\in L^1(\mu)\cap L^1(\nu)$.

Because $\tx$ is forward $\mu$-generic, we have
$$
\lim_{T\to\infty}\Iup[T]{h}{\tx}=\Av_\mu(h).
$$
Because the same point $\tx$ is also forward $\nu$-generic, we likewise have
$$
\lim_{T\to\infty}\Iup[T]{h}{\tx}=\Av_\nu(h).
$$
The left-hand sides are the same limit, so we must have
$$
\Av_\mu(h)=\Av_\nu(h),
$$
contrary to the choice of $h$. This contradiction shows that no point can be forward $\mu$-generic and forward $\nu$-generic simultaneously.

The backward statement is proved in exactly the same way, replacing the forward averages $\Iup[T]{h}{\tx}$ by the backward averages $\Idown[T]{h}{\tx}$.

Finally, if $\tx$ were totally $\mu$-generic and totally $\nu$-generic, then in particular it would be forward $\mu$-generic and forward $\nu$-generic, which we have just shown to be impossible. Hence $\tx$ cannot be totally $\mu$-generic and totally $\nu$-generic at the same time.

The statement with $\mu$ and $\nu$ interchanged is identical.
\end{proof}

\begin{corollary} \label{cor__measure-generic-symmetry}
Suppose that the translation surface $\tX$ has $d>1$ distinct ergodic invariant
probability measures
$$
\mu_0,\mu_1,\ldots,\mu_{d-1}.
$$
If a non-critical point $\widetilde{x} \in \tX$ is forward, backward, or totally $\mu_i$-generic, then $\varsigma(\widetilde{x})$ is, respectively, forward, backward, or totally $\mu_{i+1}$-generic, and vice versa: if a non-critical point $\widetilde{x} \in \tX$ is forward, backward, or totally $\mu_i$-generic, then $\varsigma^{-1}(\widetilde{x})$ is, respectively, forward, backward, or totally $\mu_{i-1}$-generic. 
\end{corollary}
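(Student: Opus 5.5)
The plan is to use the fact that $\varsigma$ is an isometry of $\tX$ that commutes with the vertical flow, and then change variables in the ergodic averages. First I will check the commutation. Since $\varsigma$ does not change the $X$-coordinate and only shifts the sheet index, and since the forward flow advances the index by the same amount ($+1$) at every slit crossing, we get
$$
\tv_s(\varsigma\tx)=\varsigma(\tv_s\tx)
$$
for all $s\in\bbR$ and every non-critical $\tx$ whose full orbit is defined. The same identity then holds for the backward flow $\tv^-_s$. Also, $\varsigma$ preserves the flat metric and fixes only $\widetilde P,\widetilde Q$, so $\varsigma(\tx)$ is non-critical and its trajectories are defined for all times whenever this is true for $\tx$.

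Next, take $f\in\mathcal C(\tX)$. Then $f\circ\varsigma\in\mathcal C(\tX)$, and the commutation relation gives
$$
\Iup[T]{f}{\varsigma\tx}=\frac1T\int_0^T f(\varsigma\,\tv_s\tx)\,\rd s=\Iup[T]{f\circ\varsigma}{\tx},
$$
and in the same way $\Idown[T]{f}{\varsigma\tx}=\Idown[T]{f\circ\varsigma}{\tx}$. If $\tx$ is forward $\mu_i$-generic, then the right-hand side converges to $\Av_{\mu_i}(f\circ\varsigma)$. This equals $\Av_{\varsigma_*\mu_i}(f)$, which by Lemma \ref{lemma__n-measures}(2) is $\Av_{\mu_{i+1}}(f)$. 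Hence $\varsigma\tx$ is forward $\mu_{i+1}$-generic. The backward case uses $\Idown$ in exactly the same way. The total case follows by combining the forward and backward cases.

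For the converse direction, apply the same argument with $\varsigma^{-1}$, using $\varsigma^{-1}_*\mu_i=\mu_{i-1}$, which follows from applying $\varsigma^{-1}_*$ to $\varsigma_*\mu_{i-1}=\mu_i$. Equivalently, one can apply the first part to $\varsigma^{-1}\tx$ and then use Corollary \ref{cor__no-double-genericity} to pin down the index.

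The only step I expect to need care is the commutation $\tv_s\circ\varsigma=\varsigma\circ\tv_s$ for orbits that cross the slit, since the sheet label must change consistently on both sides. This holds because the gluing rule $i\mapsto i+1$ is the same for every sheet, so shifting all labels by one commutes with it. The change of variables itself is routine.
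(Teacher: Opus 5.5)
Your proof is correct and is essentially the paper's argument: commutation of $\varsigma$ with $\tv_t$ gives $\Iup[T]{f}{\varsigma(\tx)}=\Iup[T]{f\circ\varsigma}{\tx}$, and $\Av_{\mu_i}(f\circ\varsigma)=\Av_{\mu_{i+1}}(f)$ by $\varsigma_*\mu_i=\mu_{i+1}$; the backward, total, and $\varsigma^{-1}$ cases are handled the same way. One caution: your ``equivalently'' variant for the converse (apply the first part to $\varsigma^{-1}\tx$ and invoke Corollary \ref{cor__no-double-genericity}) presupposes that $\varsigma^{-1}\tx$ is already generic for some $\mu_k$, which is not yet known, so keep the direct argument with $\varsigma^{-1}$ (alternatively, use $\varsigma^{-1}=\varsigma^{\mathrm{N}-1}$ together with $d\mid\mathrm{N}$).
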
 

\begin{proof}
Assume that $\widetilde{x}$ is forward $\mu_i$-generic. Fix $f \in \mathcal{C}(\tX)$. Since the permutation $\varsigma$ commutes with the vertical flow, for every $T>0$ we have
\begin{equation}
    \Iup[T]{f}{\varsigma(\widetilde{x})} = \Iup[T]{f\circ\varsigma}{\widetilde{x}}
\end{equation}
Since $f\circ\varsigma\in\mathcal{C}(\tX)$ (and $\widetilde{x}$ is forward $\mu_i$-generic),
\begin{equation}
\lim_{T\to\infty} \Iup[T]{f\circ\varsigma}{\widetilde{x}} = \Av_{\mu_i}(f\circ\varsigma),
\end{equation}
and
\begin{equation}
\lim_{T\to\infty} \Iup[T]{f}{\varsigma(\widetilde{x})} = \Av_{\mu_i}(f\circ\varsigma).
\end{equation}
Since, $\varsigma_{*}\mu_i = \mu_{i+1}$,
\begin{equation}
    \Av_{\mu_i}(f\circ\varsigma) = \Av_{\mu_{i+1}}(f).
\end{equation}
So, $\varsigma(\widetilde{x})$ is forward $\mu_{i+1}$-generic.

The proof for the backward direction is identical, and the conclusion for totally generic points follows immediately. The other direction of the proof is identical.
\end{proof}

\begin{corollary}\label{cor__measure-0-no-symmetry}
Suppose that the translation surface $\tX$ has $d>1$ distinct ergodic invariant
probability measures
$$
\mu_0,\mu_1,\ldots,\mu_{d-1}.
$$
For $i\neq j$, the set of points that are simultaneously forward
$\mu_i$-generic and backward $\mu_j$-generic has
$\mathcal L_{\tX}$-measure zero. Consequently, the set of points whose
forward and backward generic measures are different has
$\mathcal L_{\tX}$-measure zero.
\end{corollary}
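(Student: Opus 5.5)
The plan is to combine Lemma \ref{lemma__n-measures} (every ergodic measure $\mu_j$ is absolutely continuous and $\sum_j\mu_j=d\,\mathcal L_{\tX}$) with the Birkhoff--Khinchin theorem applied in both time directions (Corollary \ref{cor__backward-flow}). Fix $i\neq j$ and let $E_{ij}$ be the set of points that are forward $\mu_i$-generic and backward $\mu_j$-generic. Since $\mathcal L_{\tX}=\frac1d\sum_k\mu_k$, it suffices to show $\mu_k(E_{ij})=0$ for every $k$.

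To obtain full-measure generic sets, use separability of $\mathcal C(\tX)$. Choose a countable dense family $\{f_n\}\subset\mathcal C(\tX)$. For each $k$, Theorem \ref{thm__Birkhoff-Khinchin} and Corollary \ref{cor__backward-flow} give a $\mu_k$-full-measure set $G_k$ on which, for every $n$, both the forward averages $\Iup[T]{f_n}{\tx}$ and the backward averages $\Idown[T]{f_n}{\tx}$ converge to $\Av_{\mu_k}(f_n)$. We also discard the measure-zero set of points whose orbit hits a singularity or a ramification point, which is countably many vertical segments. A uniform approximation argument ($|\Iup[T]{f}{\tx}-\Iup[T]{f_n}{\tx}|\le\|f-f_n\|_\infty$) then upgrades convergence to all $f\in\mathcal C(\tX)$. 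Hence every point of $G_k$ is totally $\mu_k$-generic.

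Now take $x\in G_k\cap E_{ij}$. It is forward $\mu_k$-generic and forward $\mu_i$-generic, so $k=i$ by Corollary \ref{cor__no-double-genericity}. It is also backward $\mu_k$-generic and backward $\mu_j$-generic, so $k=j$. Since $i\neq j$, this is a contradiction, so $G_k\cap E_{ij}=\varnothing$ and $\mu_k(E_{ij})\le\mu_k(\tX\setminus G_k)=0$. Summing over $k$ gives $\mathcal L_{\tX}(E_{ij})=0$. The second claim follows because the set of points whose forward and backward generic measures differ is the finite union $\bigcup_{i\neq j}E_{ij}$.

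The only delicate point is measurability of $E_{ij}$. If we want to avoid proving it, we can phrase everything with outer measure: $E_{ij}$ is contained in the measurable null set $\bigcap_k(\tX\setminus G_k)$ intersected appropriately, or more simply in $\tX\setminus\bigcup_k G_k$. That set has $\mu_k$-measure zero for every $k$, and hence $\mathcal L_{\tX}$-measure zero. I expect no further obstacles.
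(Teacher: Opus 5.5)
Your proof is correct and is essentially the paper's argument. The paper also takes the full-measure sets $F_k\cap B_k$ of totally $\mu_k$-generic points, uses Corollary \ref{cor__no-double-genericity} to place every forward-$\mu_i$/backward-$\mu_j$ point outside $G=\bigcup_k(F_k\cap B_k)$, and concludes from $\sum_k\mu_k=d\,\mathcal{L}_{\tX}$ that $\mathcal{L}_{\tX}(\tX\setminus G)=0$. Your countable-dense-family step in $\mathcal{C}(\tX)$ makes explicit what the paper uses silently when it asserts $\mu_k(F_k)=\mu_k(B_k)=1$ directly from Birkhoff--Khinchin.
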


\begin{proof}
Let $F_i$ be the set of forward $\mu_i$-generic points, and let $B_i$
be the set of backward $\mu_i$-generic points. By the Birkhoff--Khinchin
theorem applied to the forward and backward flows,
$$
\mu_i(F_i)=\mu_i(B_i)=1.
$$
Hence
$$
\mu_i(F_i\cap B_i)=1.
$$

Set
$$
G:=\bigcup_{i=0}^{d-1}(F_i\cap B_i).
$$
Then $\mu_i(G)=1$ for every $i$. Since
$$
\mu_0+\cdots+\mu_{d-1}=d\,\mathcal L_{\tX},
$$
we obtain
$$
\mathcal L_{\tX}(G)
=
\frac1d\sum_{i=0}^{d-1}\mu_i(G)
=
1.
$$
Thus $\mathcal L_{\tX}(\tX\setminus G)=0$.

Now suppose that a point is forward $\mu_i$-generic and backward
$\mu_j$-generic with $i\neq j$. By Corollary
\ref{cor__no-double-genericity}, such a point cannot lie in any set
$F_k\cap B_k$. Therefore it belongs to $\tX\setminus G$, which has
Lebesgue measure zero. This proves the claim.
\end{proof}

\begin{corollary}
Let $m$ be an ergodic invariant measure on $\tX$. The set of totally $m$-generic points in $\tX$ has full $m$-measure.
\end{corollary}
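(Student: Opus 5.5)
The plan is to intersect the forward-generic and backward-generic sets for $m$, each of which has full $m$-measure. Let $F_m$ be the set of non-singular points $\tx$, with bi-infinite vertical trajectories, that are forward $m$-generic, and let $B_m$ be the analogous set of backward $m$-generic points. The set of totally $m$-generic points is exactly $F_m\cap B_m$. Hence it suffices to show $m(F_m)=m(B_m)=1$, since then $m(F_m\cap B_m)\ge 1-m(\tX\setminus F_m)-m(\tX\setminus B_m)=1$.

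The key step is to upgrade the Birkhoff--Khinchin theorem from ``for each fixed $f$, almost every $\tx$'' to ``almost every $\tx$, for every $f\in\mathcal C(\tX)$''. To do this, I will use that $\tX$ is a compact metric space, so $\mathcal C(\tX)$ is separable in the sup norm; fix a countable dense set $\{f_n\}$.

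\begin{enumerate}
\item Apply Theorem \ref{thm__Birkhoff-Khinchin} to each $f_n$ to get a full-measure set $E_n$ on which $\Iup[T]{f_n}{\tx}\to\Av_m(f_n)$. Set $E=\bigcap_n E_n$, which still has full $m$-measure.
\item Remove the points that are singular or whose vertical trajectory hits a singularity. These lie on finitely many vertical separatrices, a Lebesgue-null set. By Lemma \ref{lemma__n-measures}, $m\ll\mathcal L_{\tX}$, so this set is also $m$-null.
\item For arbitrary $f\in\mathcal C(\tX)$ and $\varepsilon>0$, choose $f_n$ with $\|f-f_n\|_\infty<\varepsilon$. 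Then both $|\Iup[T]{f}{\tx}-\Iup[T]{f_n}{\tx}|$ and $|\Av_m(f)-\Av_m(f_n)|$ are below $\varepsilon$. Hence $\limsup_T|\Iup[T]{f}{\tx}-\Av_m(f)|\le 2\varepsilon$ on $E$, so $E\subset F_m$.
\end{enumerate}

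The backward case is identical once Corollary \ref{cor__backward-flow} is invoked in place of the forward theorem.

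There is no serious obstacle. The only point needing care is the uniformity over all continuous test functions, which the separability argument above handles. The one subtlety is ensuring that the excluded singular trajectories form an $m$-null set, and absolute continuity from Lemma \ref{lemma__n-measures} settles this.
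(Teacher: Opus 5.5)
Your proposal is correct and follows the route the paper intends. The paper records this corollary without a separate proof, as an immediate consequence of Theorem \ref{thm__Birkhoff-Khinchin} and Corollary \ref{cor__backward-flow}, intersecting the full-measure sets of forward and backward $m$-generic points exactly as it does in the proofs of Corollary \ref{cor__measure-0-no-symmetry} and Lemma \ref{lemma__int-gen-pts-ae}; your countable dense subset of $\mathcal C(\tX)$ and your use of $m\ll\mathcal L_{\tX}$ to discard singular trajectories supply the details the paper leaves implicit.
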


\begin{lemma}\label{lemma__Teichmuller-preserves-genericity}
Fix $\tau\in\bbR$. Let $\tX_\tau=g_\tau\widetilde X$ be the image of $\widetilde X$ under the
\Teichmuller~flow, and let $\widetilde v^{(\tau)}_s$ denote the vertical flow on
$\tX_\tau$. Let $m$ be an ergodic $\widetilde v_s$-invariant probability
measure on $\widetilde X$, and define
$$
m_\tau := (g_\tau)_*m.
$$
Then $m_\tau$ is an ergodic $\widetilde v^{(\tau)}_s$-invariant probability measure on
$g_\tau\widetilde X$.

Moreover, if $\widetilde x\in \widetilde X$ is non-critical, then
$$
\widetilde x
\text{ is forward/backward/totally $m$-generic on }\widetilde X
$$
if and only if
$$
g_\tau\widetilde x
\text{ is forward/backward/totally $m_\tau$-generic on }\tX_\tau.
$$
\end{lemma}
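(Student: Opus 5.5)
The plan is to use the conjugacy relation between the vertical flows on $\tX$ and $\tX_\tau$. In flat charts, $g_\tau$ acts as the linear map $(x,y)\mapsto(e^\tau x,e^{-\tau}y)$. The vertical flow $\tv_s$ moves $(x,y)$ to $(x,y+s)$. Its image under $g_\tau$ is the point $(e^\tau x, e^{-\tau}y+e^{-\tau}s)$, so we get the time-change identity
$$
g_\tau\circ \tv_s=\tv^{(\tau)}_{e^{-\tau}s}\circ g_\tau ,
\qquad s\in\bbR .
$$
This holds chart by chart. The chart changes are translations, which commute with the linear map, and the slit gluings are also by translation. Hence the identity holds globally on trajectories avoiding the singular set, which includes every non-critical point whose trajectory is defined for all time. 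The map $g_\tau:\tX\to\tX_\tau$ is a homeomorphism, in fact a bi-Lipschitz one with constant $e^{|\tau|}$.

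\textbf{Invariance and ergodicity of $m_\tau$.} For a Borel set $A\subset\tX_\tau$ we compute
$$
m_\tau(\tv^{(\tau)}_{u}A)=m\bigl(g_\tau^{-1}\tv^{(\tau)}_uA\bigr)=m\bigl(\tv_{e^{\tau}u}g_\tau^{-1}A\bigr)=m(g_\tau^{-1}A)=m_\tau(A).
$$
If $A$ is $\tv^{(\tau)}$-invariant, then $g_\tau^{-1}A$ is $\tv$-invariant by the same identity. Ergodicity of $m$ then gives $m_\tau(A)\in\{0,1\}$.

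\textbf{Transfer of genericity.} Let $f\in\mathcal C(\tX_\tau)$. Then $f\circ g_\tau\in\mathcal C(\tX)$, and every element of $\mathcal C(\tX)$ arises this way. Substituting $u=e^{-\tau}s$ gives
$$
\frac1T\int_0^T f\bigl(\tv^{(\tau)}_u g_\tau\tx\bigr)\,\rd u
=\frac{1}{e^{\tau}T}\int_0^{e^{\tau}T}(f\circ g_\tau)(\tv_s\tx)\,\rd s,
$$
that is, $\Iup[T]{f}{g_\tau\tx}=\Iup[e^\tau T]{f\circ g_\tau}{\tx}$. Since $e^\tau T\to\infty$ exactly when $T\to\infty$, the left side converges to $\Av_{m_\tau}(f)=\Av_m(f\circ g_\tau)$ if and only if the right side converges to the same number. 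This is precisely forward $m$-genericity of $\tx$ tested against $f\circ g_\tau$. The backward case is identical with $\Idown$. Total genericity is the conjunction of the two, so it follows. Applying the argument with $g_{-\tau}$, which maps $\tX_\tau$ back to $\tX$, gives the converse directly; alternatively, the equivalence above already runs in both directions.

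\textbf{Main obstacle.} The only step needing care is justifying the global conjugacy $g_\tau\tv_s=\tv^{(\tau)}_{e^{-\tau}s}g_\tau$ on the branched cover. We must check that $g_\tau$ respects the slit gluing: the image of the slit is again a straight non-vertical slit, and the cyclic sheet-index rule is preserved. We must also check that non-critical points stay non-critical, meaning their bi-infinite trajectories avoid $g_\tau(\Sigma)$ and the ramification points. Both hold because $g_\tau$ is a homeomorphism defined by post-composing translation charts, and it maps vertical lines to vertical lines.
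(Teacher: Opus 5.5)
Your proof is correct and follows essentially the same route as the paper. It uses the time-change identity $g_\tau\circ\tv_s=\tv^{(\tau)}_{e^{-\tau}s}\circ g_\tau$ (equivalent to the paper's $\tv^{(\tau)}_s\circ g_\tau=g_\tau\circ\tv_{e^\tau s}$), pushforward invariance and ergodicity, and the substitution $\Iup[T]{f}{g_\tau\tx}=\Iup[e^\tau T]{f\circ g_\tau}{\tx}$. Your version is slightly cleaner in one respect: you test against $f\in\mathcal C(\tX_\tau)$, as the definition of genericity requires (the paper writes $f\in L^1(m_\tau)$), and the bijection $f\mapsto f\circ g_\tau$ gives both directions at once.
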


\begin{proof}
Since the \Teichmuller~map
$$
g_\tau=\begin{pmatrix}e^\tau&0\\0&e^{-\tau}\end{pmatrix}
$$
contracts the vertical direction by the factor $e^{-\tau}$, the vertical flows on
$\widetilde X$ and on $\tX_\tau$ are related by
\begin{equation}\label{eq__Teich-time-change}
\widetilde v^{(\tau)}_s\circ g_\tau
=
g_\tau\circ \widetilde v_{e^\tau s}
\qquad\text{for all }s\in\bbR.
\end{equation}
In particular, $g_\tau$ sends non-critical points to non-critical points.

We first show that $m_\tau$ is invariant under $\widetilde v^{(\tau)}_s$. Let
$A\subset \tX_\tau$ be Borel. Then, using \eqref{eq__Teich-time-change},
$$
m_\tau\bigl((\widetilde v^{(\tau)}_s)^{-1}A\bigr)
=
m\bigl(g_\tau^{-1}((\widetilde v^{(\tau)}_s)^{-1}A)\bigr)
=
m\bigl((\widetilde v_{e^\tau s})^{-1}(g_\tau^{-1}A)\bigr)
=
m(g_\tau^{-1}A)
=
m_\tau(A),
$$
because $m$ is $\widetilde v_s$-invariant.

Next we show ergodicity. Let $A\subset \tX_\tau$ be a Borel set such that
$$
(\widetilde v^{(\tau)}_s)^{-1}(A)=A
\qquad\text{for all }s\in\bbR.
$$
Then \eqref{eq__Teich-time-change} implies
$$
\widetilde v_t^{-1}(g_\tau^{-1}A)=g_\tau^{-1}A
\qquad\text{for all }t\in\bbR,
$$
since every $t\in\bbR$ can be written as $t=e^\tau s$. Thus $g_\tau^{-1}A$ is
$\widetilde v_t$-invariant on $\widetilde X$. By ergodicity of $m$,
$$
m(g_\tau^{-1}A)\in\{0,1\}.
$$
Therefore
$$
m_\tau(A)=m(g_\tau^{-1}A)\in\{0,1\},
$$
so $m_\tau$ is ergodic.

Now suppose that $\widetilde x$ is forward $m$-generic, and let $f\in L^1(m_\tau)$.
Then $f\circ g_\tau\in L^1(m)$, and by \eqref{eq__Teich-time-change},
$$
\frac1T\int_0^T f\bigl(\widetilde v^{(\tau)}_s(g_\tau\widetilde x)\bigr)\,ds
=
\frac1T\int_0^T f\bigl(g_\tau(\widetilde v_{e^\tau s}\widetilde x)\bigr)\,ds.
$$
Making the change of variables $u=e^\tau s$, so that $ds=e^{-\tau}du$, we get
$$
\frac1T\int_0^T f\bigl(\widetilde v^{(\tau)}_s(g_\tau\widetilde x)\bigr)\,ds
=
\frac{1}{e^\tau T}\int_0^{e^\tau T} (f\circ g_\tau)(\widetilde v_u\widetilde x)\,du.
$$
Since $\widetilde x$ is forward $m$-generic, the right-hand side converges, as
$T\to\infty$, to
$$
\int_{\widetilde X} f\circ g_\tau \,dm
=
\int_{g_\tau\widetilde X} f\,d(g_\tau)_*m
=
\int_{g_\tau\widetilde X} f\,dm_\tau.
$$
Hence $g_\tau\widetilde x$ is forward $m_\tau$-generic.

The backward case is identical:
$$
\frac1T\int_0^T f\bigl(\widetilde v^{(\tau)}_{-s}(g_\tau\widetilde x)\bigr)\,ds
=
\frac{1}{e^\tau T}\int_0^{e^\tau T} (f\circ g_\tau)(\widetilde v_{-u}\widetilde x)\,du,
$$
so backward $m$-genericity of $\widetilde x$ implies backward $m_\tau$-genericity
of $g_\tau\widetilde x$.

Thus total $m$-genericity of $\widetilde x$ implies total $m_\tau$-genericity of
$g_\tau\widetilde x$.

Finally, the converse implications follow by applying the same argument to the inverse
\Teichmuller~map $g_{-\tau}:\tX_\tau\to \widetilde X$.
\end{proof}

Measure-generic points are clearly invariant under the action of the vertical flow. This fact is useful in many ways.

\begin{lemma}\label{lemma__int-gen-pts-ae}
Let $I$ be a horizontal interval on $\tX$. Then the set of forward
generic points in $I$ has full Lebesgue measure $\mathcal L_I$. The same
holds for backward generic points and for totally generic points.
\end{lemma}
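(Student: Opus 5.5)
The plan is to transfer the full-measure statement from the two-dimensional Lebesgue measure $\LtX$ to the one-dimensional measure $\mathcal L_I$ on the horizontal interval. We use the fact that the set of generic points is invariant under the vertical flow, together with a local product structure of Lebesgue measure near $I$.

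Let $G_F$ denote the set of forward generic points, that is, the union over $i$ of the sets $F_i$ of forward $\mu_i$-generic points. Define $G_B$ similarly for backward generic points. Set $G_T=\bigcup_i (F_i\cap B_i)$.

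\textbf{Step 1.} By Theorem \ref{thm__Birkhoff-Khinchin} and Corollary \ref{cor__backward-flow}, $\mu_i(F_i\cap B_i)=1$ for every $i$. Since $\sum_i\mu_i=d\,\LtX$ by Lemma \ref{lemma__n-measures}, the argument of Corollary \ref{cor__measure-0-no-symmetry} gives $\LtX(\tX\setminus G_T)=0$. Because $G_T\subset G_F$ and $G_T\subset G_B$, the same holds for $G_F$ and $G_B$. When $d=1$, we simply use the single measure $\mu_0=\LtX$.

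To run this argument we must know that each $F_i$ is Borel. This follows because genericity only needs to be tested on a countable dense subset of $\mathcal C(\tX)$, and each condition on a fixed test function is a limit of measurable functions. It is enough to consider integer times $T$, by a standard bound on the difference between consecutive averages.

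\textbf{Step 2.} Next we use flow invariance. If $\tx$ is forward $m$-generic, then so is $\tv_s\tx$ for every $s$ such that the trajectory is defined. Indeed, $\Iup[T]{f}{\tv_s\tx}$ differs from $\Iup[T+s]{f}{\tx}\cdot\frac{T+s}{T}$ by at most $2|s|\,\|f\|_\infty/T$. The same holds for backward genericity, and hence for total genericity. So $G_F$, $G_B$ and $G_T$ are unions of full vertical trajectories.

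\textbf{Step 3.} We flow the interval to form a box. Shrinking $I$ if necessary, we may assume $I$ avoids the finitely many singular points and ramification points; their measure in $I$ is zero anyway. Choose $\varepsilon>0$ small enough that the map
\[
\Phi\colon I\times(0,\varepsilon)\to\tX,\qquad (x,s)\mapsto \tv_s x,
\]
is defined and injective. This is possible after cutting $I$ into countably many pieces, discarding the countably many points whose trajectory hits a singularity within time $\varepsilon$. The map $\Phi$ is a local translation chart, so $\Phi_*(\mathcal L_I\otimes \rd s)$ is the restriction of the flat area to the box $\Phi(I\times(0,\varepsilon))$, up to normalization.

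By Step 2, $\Phi^{-1}(\tX\setminus G_F)=(I\setminus G_F)\times(0,\varepsilon)$. By Step 1 this set has product measure zero, and Fubini then gives $\mathcal L_I(I\setminus G_F)=0$. The same argument applies to $G_B$ and $G_T$.

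\textbf{Main obstacle.} The delicate points are the measurability of the generic sets and the treatment of the exceptional points of $I$ near singularities, where the box fails to be embedded. The latter is handled by a countable exhaustion of $I$ by closed subintervals avoiding the countably many bad points. The remaining steps are routine.
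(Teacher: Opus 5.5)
Your proposal is correct and follows essentially the same route as the paper. Both arguments show that the generic sets have full $\LtX$-measure via Birkhoff--Khinchin and $\sum_i\mu_i=d\,\LtX$, then use flow invariance and an embedded vertical flow box over subintervals avoiding $\widetilde\Sigma$ to pass to $\mathcal L_I$. The paper phrases this last step as a contradiction, you phrase it directly as a product-measure computation, and your remarks on measurability (a countable dense family of test functions, integer times) and the explicit flow-invariance estimate fill in details the paper leaves implicit.
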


\begin{proof}
Let
$
\mu_0,\ldots,\mu_{d-1}
$ 
be the ergodic invariant probability measures on $\tX$. For each
$i\in\mathbb Z/d\mathbb Z$, let $F_i$ be the set of forward
$\mu_i$-generic points, and let $B_i$ be the set of backward
$\mu_i$-generic points. Define
$$
F:=\bigcup_{i=0}^{d-1}F_i,
\qquad
B:=\bigcup_{i=0}^{d-1}B_i,
\qquad
G:=\bigcup_{i=0}^{d-1}(F_i\cap B_i).
$$
Thus $F$ is the set of forward generic points, $B$ is the set of
backward generic points, and $G$ is the set of totally generic points.

By the Birkhoff--Khinchin theorem applied to the forward and backward
vertical flows,
$$
\mu_i(F_i)=1,
\qquad
\mu_i(B_i)=1
$$
for every $i$. Hence
$$
\mu_i(F_i\cap B_i)=1.
$$
Therefore
$$
\mu_i(F)=\mu_i(B)=\mu_i(G)=1
$$
for every $i$.

Using Lemma \ref{lemma__n-measures}, we have
$$
\mu_0+\cdots+\mu_{d-1}=d\,\mathcal L_{\tX}.
$$
Consequently,
$$
\mathcal L_{\tX}(F)
=
\frac1d\sum_{i=0}^{d-1}\mu_i(F)
=
1.
$$
Similarly,
$$
\mathcal L_{\tX}(B)=1,
\qquad
\mathcal L_{\tX}(G)=1.
$$
Thus the complements
$$
\tX\setminus F,
\qquad
\tX\setminus B,
\qquad
\tX\setminus G
$$
all have $\mathcal L_{\tX}$-measure zero.

We now show that this full-measure statement restricts to any horizontal
interval $I$. We prove it for $F$; the arguments for $B$ and $G$ are
identical.

Suppose, for contradiction, that
$$
\mathcal L_I(I\setminus F)>0.
$$
Since the set of singularities is finite, we may choose a compact
subinterval
$$
J\subset I\setminus \widetilde\Sigma
$$
such that
$$
\mathcal L_J(J\setminus F)>0.
$$
After possibly shortening $J$, there exists $\varepsilon>0$ such that
the map
$$
\Phi:J\times[0,\varepsilon]\to \tX,
\qquad
\Phi(x,t)=\widetilde v_t x,
$$
is a flow box: it is injective and identifies $J\times[0,\varepsilon]$
with a Euclidean rectangle in flat coordinates. In particular,
$$
\mathcal L_{\tX}\bigl(\Phi(A\times[0,\varepsilon])\bigr)
=
\varepsilon\,\mathcal L_J(A)
$$
for every measurable $A\subset J$.

The set $F$ is invariant under the vertical flow. Indeed, if $x$ is
forward $\mu_i$-generic, then $\widetilde v_t x$ is also forward
$\mu_i$-generic for every time for which the orbit is defined. Therefore
$\tX\setminus F$ is also invariant under the vertical flow.

Set
$$
A:=J\setminus F.
$$
Then $\mathcal L_J(A)>0$, and by flow-invariance,
$$
\Phi(A\times[0,\varepsilon])\subset \tX\setminus F.
$$
Hence
$$
\mathcal L_{\tX}(\tX\setminus F)
\geq
\mathcal L_{\tX}\bigl(\Phi(A\times[0,\varepsilon])\bigr)
=
\varepsilon\,\mathcal L_J(A)
>
0.
$$
This contradicts $\mathcal L_{\tX}(\tX\setminus F)=0$.

Therefore $I\cap F$ has full $\mathcal L_I$-measure in $I$. The same
argument applied to $B$ and $G$ proves the backward and total
genericity statements.
\end{proof}

\begin{lemma}\label{lemma__centr-symm-gen-pts}
Let $I$ be a horizontal interval on $\tX$. Then there exist two distinct
points in $I$, symmetric with respect to the center of $I$, that are
both generic. The same holds for totally generic points.
\end{lemma}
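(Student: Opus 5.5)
The idea is to use Lemma \ref{lemma__int-gen-pts-ae} together with the fact that the central reflection of $I$ preserves Lebesgue measure on $I$.

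Parametrize $I$ isometrically as $[a,b]$ with center $c=(a+b)/2$. Define the reflection $\rho:I\to I$ by $\rho(x)=a+b-x$. It is an isometry of $I$, so it preserves $\mathcal L_I$ and sends null sets to null sets.

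Let $F\subset I$ be the set of generic points in $I$, taken to be forward generic; the same argument works for backward generic points. By Lemma \ref{lemma__int-gen-pts-ae}, $\mathcal L_I(I\setminus F)=0$. Hence
$$
\mathcal L_I\bigl(I\setminus \rho(F)\bigr)=\mathcal L_I\bigl(\rho(I\setminus F)\bigr)=0 .
$$
It follows that $F\cap\rho(F)$ has full measure $\mathcal L_I(I)=b-a>0$.

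Remove the single point $c$, which has measure zero. The set $(F\cap\rho(F))\setminus\{c\}$ still has positive measure, so it is nonempty. Pick $x$ in it. Then $x\in F$ and $\rho(x)\in F$, since $x\in\rho(F)$ and $\rho$ is an involution. Moreover $x\neq\rho(x)$, because $x\neq c$. The points $x$ and $\rho(x)$ are distinct, symmetric about the center of $I$, and both generic.

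For totally generic points, run the identical argument with $F$ replaced by the set $G$ of totally generic points in $I$. Lemma \ref{lemma__int-gen-pts-ae} gives that $G$ also has full $\mathcal L_I$-measure, and nothing else changes.

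One could additionally ask that the two points avoid the finitely many singular and ramification points, or have bi-infinite trajectories. Each such requirement excludes only a null set, so it can be imposed in the same intersection.

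There is no real obstacle here. The only points needing care are that $I$ is nondegenerate, so it has positive length, and that the reflection is measure-preserving, which holds because $I$ is a straight horizontal segment in flat coordinates.
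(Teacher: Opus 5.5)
Your proof is correct and follows the paper's argument: use Lemma \ref{lemma__int-gen-pts-ae} to get that the generic set has full measure in $I$, intersect it with its image under the measure-preserving central reflection, and discard the center to obtain two distinct symmetric generic points. Writing $\rho(F)$ instead of the paper's $\mathfrak r^{-1}(G)$ makes no difference, since the reflection is an involution.
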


\begin{proof}
Let $G\subset I$ be the set of generic points in $I$. By Lemma
\ref{lemma__int-gen-pts-ae}, the set $G$ has full $\mathcal L_I$-measure
in $I$.

Let
$$
\mathfrak r:I\to I
$$
be the reflection across the center of $I$. Since $\mathfrak r$ preserves
Lebesgue measure on $I$, the set $\mathfrak r^{-1}(G)$ also has full
$\mathcal L_I$-measure in $I$. Hence
$$
G\cap \mathfrak r^{-1}(G)
$$
has full $\mathcal L_I$-measure in $I$.

Let $c$ denote the center of $I$. Since the single point $\{c\}$ has
zero $\mathcal L_I$-measure, the set
$$
G\cap \mathfrak r^{-1}(G)\setminus\{c\}
$$
is still nonempty. Choose
$$
x\in G\cap \mathfrak r^{-1}(G)\setminus\{c\}.
$$
Then $x\in G$, and since $x\in\mathfrak r^{-1}(G)$, we also have
$$
\mathfrak r(x)\in G.
$$
Therefore both $x$ and $\mathfrak r(x)$ are  generic. Moreover, since
$x\neq c$, we have
$$
x\neq \mathfrak r(x).
$$
Thus $x$ and $\mathfrak r(x)$ are two distinct points in $I$, symmetric
with respect to the center of $I$, and both are  generic.

The proof for totally generic points is identical, using the full measure
set of totally generic points in $I$ instead of $G$.
\end{proof}

\subsection{Almost measure-generic points on $\tX$}\label{UE-sect2.2}

Let $0<\epsilon<1 =\LtX(\tX) $ and $C>0$ be some real constants. Define

\begin{equation}
\begin{split}
G^\uparrow_{\epsilon,\,C} &=
\left\{\, \widetilde{x}\in\tX \ \middle|\ 
\begin{multlined}
\exists\ \text{erg. inv. meas. } m \text{ on } \tX \text{ s.t. }
\forall T > C \text{ and } \forall h\in \Lip_{1}(\tX):\\
\quad \left\| \Iup[T]{h}{\widetilde{x}} - \Av_m(h) \right\| < \epsilon
\end{multlined}
\right\},\quad
\\
G^\downarrow_{\epsilon,\,C} &=
\left\{\, \widetilde{x}\in\tX \ \middle|\ 
\begin{multlined}
\exists\ \text{erg. inv. meas. } m \text{ on } \tX \text{ s.t. }
\forall T > C \text{ and } \forall h\in \Lip_{1}(\tX):\\
\quad \left\| \Idown[T]{h}{\widetilde{x}} - \Av_m(h) \right\| < \epsilon
\end{multlined}
\right\},
\\
G^{\uparrow\downarrow}_{\epsilon,\,C} &= G^\uparrow_{\epsilon,\,C}\cap G^\downarrow_{\epsilon,\,C},
\\
G_{\epsilon,\,C} &=
\left\{\, \widetilde{x}\in\tX \ \middle|\ 
\begin{multlined}
\exists\ \text{erg. inv. meas. } m \text{ on } \tX \text{ s.t. }
\forall T > C \text{ and } \forall h\in \Lip_{1}(\tX):\\
\quad \left\| \Iup[T]{h}{\widetilde{x}} - \Av_m(h) \right\| < \epsilon
\,\text{ and }\,
\left\| \Idown[T]{h}{\widetilde{x}} - \Av_m(h) \right\| < \epsilon
\end{multlined}
\right\}.
\end{split}
\end{equation}

\begin{theorem}\label{thm__almost-generic-points}
For any constant $0<\epsilon<1=\LtX(\tX),$ there exists a positive constant $T_\epsilon$ such that
\begin{equation}
\LtX(
G_{\epsilon,\,T_\epsilon}
)>
1-\epsilon.
\end{equation}
\end{theorem}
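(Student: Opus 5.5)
The plan is to upgrade the pointwise genericity of Corollary \ref{cor__measure-0-no-symmetry} and Lemma \ref{lemma__int-gen-pts-ae} to a statement that is uniform over $\Lip_1(\tX)$. We do this by a compactness reduction to finitely many test functions, followed by Egorov's theorem. For convenience we work with the sup-metric on $\mathcal C(\tX)$.

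\textbf{Step 1 (finite net).} Since $\tX$ is compact, Arzel\`a--Ascoli shows that for each constant $c$ the set
$$
\{h\in\Lip_1(\tX): |h(\tx_0)|\le c\}
$$
is compact in $\mathcal C(\tX)$. The statement, however, quantifies over all $h\in\Lip_1(\tX)$, including those with large constant offsets. These are harmless: both $\Iup[T]{h}{\tx}$ and $\Av_m(h)$ shift by the same constant $a$ when $h$ is replaced by $h+a$, because $m$ and the time averages are probability averages. So it suffices to treat $h$ with $h(\tx_0)=0$. These functions satisfy $|h|\le \operatorname{diam}(\tX)$, so they form a compact family $\mathcal K$. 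Choose a finite $\epsilon/3$-net $h_1,\dots,h_M$ of $\mathcal K$ in the sup norm. For any $h\in\mathcal K$ with $\|h-h_j\|_\infty<\epsilon/3$, both $|\Iup[T]{h}{\tx}-\Iup[T]{h_j}{\tx}|$ and $|\Av_m(h)-\Av_m(h_j)|$ are less than $\epsilon/3$, and the same holds for $\Idown[T]{\cdot}{\tx}$. Hence uniform control over $\mathcal K$ reduces to controlling the finitely many functions $h_j$ to within $\epsilon/3$.

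\textbf{Step 2 (Egorov on each ergodic piece).} Let $\mu_0,\dots,\mu_{d-1}$ be the ergodic measures from Lemma \ref{lemma__n-measures}. For each $i$ and $j$, Theorem \ref{thm__Birkhoff-Khinchin} and Corollary \ref{cor__backward-flow} give
$$
\Iup[T]{h_j}{\tx}\to\Av_{\mu_i}(h_j)
\quad\text{and}\quad
\Idown[T]{h_j}{\tx}\to\Av_{\mu_i}(h_j)
$$
for $\mu_i$-a.e.\ $\tx$, as $T\to\infty$ through real values.

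To apply Egorov we need a countable family of functions, so we pass to integer times. This costs little: for $T\in[n,n+1]$, boundedness of $h_j$ gives
$$
|\Iup[T]{h_j}{\tx}-\Iup[n]{h_j}{\tx}|\le 2\|h_j\|_\infty/n,
$$
which is at most $\epsilon/6$ once $n$ is large.

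Egorov's theorem then provides sets $E_i$ with $\mu_i(E_i)>1-\epsilon$ and a time $C_i$ such that, for all $\tx\in E_i$, all $T>C_i$ and all $j\le M$, both the forward and the backward averages of $h_j$ lie within $\epsilon/3$ of $\Av_{\mu_i}(h_j)$. Set $T_\epsilon=\max_i C_i$. Combining with Step 1, every $\tx\in E_i$ satisfies the defining condition of $G_{\epsilon,T_\epsilon}$ with $m=\mu_i$.

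\textbf{Step 3 (passing to Lebesgue measure).} By Lemma \ref{lemma__n-measures}, $\LtX=\frac1d\sum_i\mu_i$. Since $\bigcup_i E_i\subset G_{\epsilon,T_\epsilon}$,
$$
\LtX(G_{\epsilon,T_\epsilon})\ \ge\ \frac1d\sum_i\mu_i\Bigl(\bigcup_k E_k\Bigr)\ \ge\ \frac1d\sum_i\mu_i(E_i)\ >\ 1-\epsilon.
$$
The inequality is strict because each $\mu_i(E_i)>1-\epsilon$. Measurability of $G_{\epsilon,T_\epsilon}$ need not be checked separately: the estimate already holds for the measurable subset $\bigcup_i E_i$, so inner measure suffices.

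\textbf{Main obstacle.} The delicate point is the uniformity over the non-compact family $\Lip_1(\tX)$, which Step 1 resolves through the constant-shift invariance and Arzel\`a--Ascoli. A secondary technicality is that the net $\{h_j\}$ must be chosen before Egorov is applied and independently of $i$; this is harmless because it depends only on $\epsilon$. The remaining care is to keep all the $\epsilon/3$ budgets strict, so that the final inequality is strict.
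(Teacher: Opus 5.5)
Your proof is correct and takes essentially the same route as the paper. It normalizes $\Lip_1(\tX)$ at a base point using constant-shift invariance, takes a finite sup-norm net via Arzel\`a--Ascoli, applies Egorov at integer times for each $\mu_i$, interpolates to real times, and averages using $d\,\LtX=\sum_i\mu_i$. The only difference is organizational: you handle forward and backward averages in a single Egorov step, rather than proving separate one-sided lemmas and intersecting them, which also avoids the paper's small miscount of the measure of that intersection.
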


\begin{proof}
For all $1\leq i\leq d$, Let 

\begin{equation}
G_{\epsilon,\,C}^{\mu_i} =
\left\{\, \widetilde{x}\in\tX \ \middle|\ 
\begin{multlined}
\forall T > C \text{ and } \forall h\in \Lip_{1}(\tX):\\
\quad \left\| \Iup[T]{h}{\widetilde{x}} - \Av_{\mu_i}(h) \right\| < \epsilon
\,\text{ and }\,
\left\| \Idown[T]{h}{\widetilde{x}} - \Av_{\mu_i}(h) \right\| < \epsilon
\end{multlined}
\right\}.
\end{equation}

Note that 
\begin{equation}\label{union}
G_{\epsilon,\,C} =\bigcup_{i=1}^d G_{\epsilon,\,C}^{\mu_i}.
\end{equation}

First, suppose we have estimates for each $\mu_i$ individually. That is, suppose that the following holds.

\begin{lemma}\label{thm__almost-generic-points_individual_measures}
For any constant $0<\epsilon<1$ there exists positive constants $T_\epsilon^{\mu_0},\, \ldots,\, T_\epsilon^{\mu_{d-1}}$ such that for all $0\leq i\leq d-1$,
\begin{equation}
\mu_i(
G_{\epsilon,\,T_\epsilon^{\mu_i}}^{\mu_i}
)>
1-\epsilon.
\end{equation}
\end{lemma}

Let $T_{\epsilon} = \max_{0\leq i\leq d-1} T_\epsilon^{\mu_i}$. Then

\begin{equation}
  \mu(G_{\epsilon,\,T_\epsilon}^{\mu_i}) > 1 - \epsilon  \text{ for all }  0\leq i\leq d-1.
\end{equation}

By \eqref{union}, we have

\begin{equation}
  d \mathcal{L}_{\tX} (\tX \setminus G_{\epsilon,\,T_\epsilon}) =
  \sum_{i=0}^{d-1}\mu_i(\tX \setminus G_{\epsilon,\,T_\epsilon}) 
  \leq 
  \sum_{i=0}^{d-1}\mu_i(\tX \setminus G_{\epsilon,\,T_\epsilon}^{\mu_i}) 
  < d \epsilon.  
\end{equation}

So $$\mathcal{L}_{\tX} (G_{\epsilon,\,T_\epsilon}) > 1 - \epsilon.$$

Now in order to complete the proof of Theorem \ref{thm__almost-generic-points} we need to prove Lemma \ref{thm__almost-generic-points_individual_measures}. We will prove it for the measure $\mu=\mu_i$ for an arbitrary $0 \leq i\leq d-1$. 

Once again, we can reduce the proof to two twin proofs by considering the one-sided sets

\begin{equation}
G_{\epsilon,\,C}^{\mu,\, \uparrow} = \left\{\, \widetilde{x}\in\tX \ \middle|\ 
\forall T > C \text{ and } \forall h\in \Lip_{1}(\tX) : \left\| \Iup[T]{h}{\widetilde{x}} - \Av_\mu(h) \right\| < \epsilon
\right\}
\end{equation}

and 

\begin{equation}
G_{\epsilon,\,C}^{\mu,\, \downarrow} = \left\{\, \widetilde{x}\in\tX \ \middle|\ 
\forall T > C \text{ and } \forall h\in \Lip_{1}(\tX) : \left\| \Idown[T]{h}{\widetilde{x}} - \Av_\mu(h) \right\| < \epsilon
\right\}.
\end{equation}

By definition

\begin{equation}
 G_{\epsilon,\,C}^{\mu} = G_{\epsilon,\,C}^{\mu,\, \uparrow} \cap G_{\epsilon,\,C}^{\mu,\, \downarrow}.
\end{equation}

Assume that the following holds.

\begin{lemma}\label{up_down}
For any $0 < \xi < 1$ there exist constants $C_{\uparrow}$ and $C_{\downarrow}$ such that
\begin{equation}
\mu(G_{\xi,\,C_{\uparrow}}^{\mu,\, \uparrow}) > 1-\xi \text{  and  }  \mu(G_{\xi,\,C_{\downarrow}}^{\mu,\, \downarrow}) > 1-\xi.
\end{equation}
\end{lemma}

Take $\xi = \frac{\epsilon}{2}$. Let $T_\epsilon^{\mu} = \max\{C_{\uparrow}, C_{\downarrow}\}$. Then

\begin{equation}
\mu(G_{\xi,\,T_\epsilon^{\mu}}^{\mu,\, \uparrow}) > 1 - \frac{\epsilon}{2}
\end{equation}

and

\begin{equation}
\mu(G_{\xi,\,T_\epsilon^{\mu}}^{\mu,\, \downarrow}) > 1 - \frac{\epsilon}{2}.
\end{equation}

Hence 

\begin{equation}
\mu(G_{\xi,\,T_\epsilon^{\mu}}^{\mu,\, \uparrow} \cap G_{\xi,\,T_\epsilon^{\mu}}^{\mu,\, \downarrow}) > 1 - \frac{\epsilon}{2}.
\end{equation}.

Finally, this intersection is contained in $G_{\epsilon,\,T_\epsilon^{\mu}}^{\mu}$, since $\frac{\epsilon}{2} < \epsilon$. We have now reduced the proof of Theorem \ref{thm__almost-generic-points} to proving Lemma \ref{up_down}. We will now give a proof for the upward direction, and the downward direction will follow by a similar argument once more.

\begin{proof}
Let $\widetilde{x}_{0} \in \tX$ be some non-singular point on $\tX$. Let us consider all $1$-Lipschitz functions on $\tX$ that vanish at $\widetilde{x}_{0}$:
\begin{equation*}
    \mathcal{F}_{0} = \bigg\{ f \in \Lip_{1}(\tX) \text{ }\bigg|\text{ } f(\widetilde{x}_{0}) = 0 \bigg\}.
\end{equation*}

We will discuss a couple of properties of the set $\mathcal{F}_{0}$ that will be useful in the proof.

\begin{proposition}\label{F_zero_uniformly_bounded}
There exists a constant $D$ such that every $f\in\mcl{F}_{0}$ is uniformly bounded by $D$ (note that the constant here is the same for all the functions in $\mathcal{F}_{0}$).  
\end{proposition}

\begin{proof}
For all $f \in \mathcal{F}_{0}$ and for all $\widetilde{x} \in \tX$ we have 
\begin{equation}
    |f(\widetilde{x})| = |f(\widetilde{x}) - 0| = |f(\widetilde{x}) - f(\widetilde{x}_{0})|\leq d(\widetilde{x}, \widetilde{x}_{0}) \leq \textrm{diam}(\tX)=:D.
\end{equation}
\end{proof}

\begin{proposition}\label{compact F 0}
The set $\mcl{F}_{0} \subset \mathcal{C}(\tX)$ is compact in the supremum norm $\|\cdot\|_{\infty}$.
\end{proposition}

\begin{proof}
First, we will show that $\mcl{F}_{0}$ is closed in the supremum norm topology. Suppose that a sequence of functions $\{f_{n}\}$ from $\mathcal{F}_{0}$ converges uniformly on $\tX$ to some other function $f$. We will prove that $f \in \mathcal{F}_{0}$.

Since convergence is uniform, pointwise convergence is present at each point. So 

\begin{equation}
    f(\widetilde{x}_{0}) = \lim_{n \rightarrow \infty} f_{n}(\widetilde{x}_{0}) = \lim_{n \rightarrow \infty} 0 = 0,
\end{equation}
\\
since every $f_{n}(\widetilde{x}_{0}) = 0$.
\\
We now check that $f$ is $1$-Lipschitz. Fix any pair of points $\widetilde{x}, \widetilde{y} \in \tX$. We have for any fixed $f_{n}$ 

\begin{equation}
|f_{n}(\widetilde{x}) - f_{n}(\widetilde{y})| \leq d_{\tX}(\widetilde{x}, \widetilde{y}).
\end{equation}
\\
Since $f_{n}(\widetilde{x}) \rightarrow f(\widetilde{x})$ and $f_{n}(\widetilde{y}) \rightarrow f(\widetilde{y})$, we have

\begin{equation}
|f(\widetilde{x}) - f(\widetilde{y})| = \lim_{n \rightarrow\infty}|f_{n}(\widetilde{x}) - f_{n}(\widetilde{y})| \leq d_{\tX}(\widetilde{x}, \widetilde{y}).
\end{equation}
\\

Now we will show that the closure of $\mcl{F}_{0}$ is compact by applying the famous Arzelà–Ascoli Theorem.

\begin{remark}[Arzelà–Ascoli Theorem]
Recall that if
        \begin{enumerate}
            \item[\textbullet] $K$ is a compact metric space;
            \item[\textbullet] $\mathcal{F} \subset \mathcal{C}(K)$ is a subset of continuous functions from $K$ to $\mathbb{R}$.
        \end{enumerate}
        Then $\mathcal{F}$ is has compact closure in $\mathcal{C}(K)$ with the uniform topology if the two conditions below are met:
        \begin{enumerate}
            \item $\mathcal{F}$ is equicontinuous (that is,  for all $x, y\in K$ and for any $\varepsilon >0$ there exists a $\delta > 0$ such that for any $f \in \mathcal{F}$
        $$\text{ if }  d_{K}(x,y) < \delta \Rightarrow |f(x) - f(y)| < \varepsilon);$$
            \item For every $x \in K$ the set $\big\{ f(x) \text{ }\big|\text{ } f \in \mathcal{F}\big\}$ has compact closure in $\mathbb{R}$.
        \end{enumerate}
\end{remark}

We do a straightforward check of all the conditions in our case. First, note that $\tX$ with its standard flat metric is compact, since it is a finite-area translation surface. Equicontinuity is a consequence of all the functions being $1$-Lipschitz for the same constant $1$. We can simply take $\delta = \varepsilon$, and then if $d_{\tX}(\widetilde{x}, \widetilde{y}) < \delta$ we have by the definition of a $1$-Lipschitz function:
$$|f(\widetilde{x}) - f(\widetilde{y})| \leq d_{\tX}(\widetilde{x}, \widetilde{y}) < \delta = \varepsilon.$$
Note that the same $\delta$ works for all functions in $\mathcal{F}_{0}$.

By Proposition \ref{F_zero_uniformly_bounded}, for every $\widetilde{x} \in \tX$ we have the set 
$$\big\{f(\widetilde{x}) \text{ }\big|\text{ } f \in \mathcal{F}_{0} \big\}$$
bounded from above by the diameter $D$ of $\tX$. It is then a bounded set in $\mathbb{R}$, and bounded sets in $\mathbb{R}$ have compact closures. So $\mathcal{F}_{0}$ is closed and has compact closure, which means it is compact (in the sup-norm $\|\cdot\|_{\infty}$).
\end{proof}

After establishing compactness of $\mathcal{F}_{0}$, we can build a finite subcover. Fix any positive number $\delta > 0$. For every $f \in \mathcal{F}_{0}$ consider its $\delta$-neighborhood $U_{\delta}(f)$ in the uniform metric. The collection of all such neighborhoods is an open cover of the set $\mathcal{F}_{0}$. Since $\mathcal{F}_{0}$ is compact, it has a finite subcover, so there is a finite subset of functions $\{f_{1}, f_{2}, \ldots, f_{M_{\delta}}\} \subset \mathcal{F}_{0}$, that are centers of this finite subset of neighborhoods $U_{\delta}(f_{1}), U_{\delta}(f_{2}), \ldots, U_{\delta}(f_{M_{\delta}})$ that cover the entire $\mathcal{F}_{0}$. In other words, for any $f \in \mathcal{F}_{0}$ there is at least one $f_{j}\in\{f_{1}, f_{2}, \ldots, f_{M_{\delta}}\}$ such that

\begin{equation}
    ||f-f_{j}||_{\infty} \leq \delta.
\end{equation}

By the Birkhoff--Khinchin Pointwise Ergodic Theorem for flows, for $\mu$-almost every point in $\tX$ the discrete sequence of the time-average integrals at integer points in time converges to the space average for any fixed $f_{j}$ :

\begin{equation}
 \lim_{N \rightarrow \infty} \Iup[N]{f_{j}}{\widetilde{x}} = \Av_\mu(f_{j}).
\end{equation}

 Fix one of the neighborhood-center $f_{j}$-s. Then fix $N$ -- the integer moment in time. Only the point $\widetilde{x}$ is now varying in the expsession $\Iup[N]{f_{j}}{\widetilde{x}}$, so view that average of $f_{j}$ at time $N$ as a function of $\widetilde{x}$.

 \begin{proposition}\label{up_int_mu_measurable}
    This function $\Iup[N]{f_{j}}{\cdot}$ is $\mu$-measurable on $\tX$.
 \end{proposition}

 \begin{proof}
 Since the function $(s, \widetilde{x}) \mapsto f_{j}(\widetilde{v}_{s}\widetilde{x})$ is continuous on $[0, N] \times \tX$, its integral over the compact interval $[0, N]$ is continuous, hence is $\mu$-measurable. 
 \end{proof}

The space-average $\Av_\mu(f_{j})$ does not depend on any point, but can be viewed as a constant function on $\tX$ that just takes the value $\Av_\mu(f_{j})$ at all points $\widetilde{x}\in\tX$ ($j$ is fixed). Being a constant function, it is obviously $\mu$-measurable on $\tX$. 

We will now prove this.

\begin{proposition}\label{main_egorov_step}
For any $0 < \zeta < 1$ there exists a $\mu$-measurable set $E(\zeta) \subset \tX$ such that $\mu(E(\zeta)) \geq 1 - \zeta$ and such that for any $j \in \{1, \ldots, M_{\delta}\}$

\begin{equation}
    \lim_{N \rightarrow \infty} \Iup[N]{f_{j}}{\widetilde{x}} = \Av_\mu(f_{j})
\end{equation}

uniformly on $E(\zeta)$.
\end{proposition}

\begin{proof}
This proof amounts to an application of another classical theorem.

\begin{remark}[Egorov-Severini Theorem]
Suppose that $(Y, m)$ is a finite measure space. Let $\psi_{n}:Y \rightarrow \mathbb{R}$ be measurable functions with $\lim_{n \rightarrow\infty}\psi_{n}(y) = \psi(y)$ for $m$-almost every $y \in Y$. Then for every $\eta > 0$ there exists a measurable set $E \subset Y$ such that $m(Y\setminus E) < \eta$ and $\psi_{n} \rightarrow \psi$ uniformly on $E$.
\end{remark}

Fix one $f_{j}$ from the finite set $\{f_{1}, f_{2}, \ldots, f_{M_{\delta}}\}$ obtained above. Take $Y = \tX$, $m = \mu$ and $\psi_{N}(\widetilde{x}) = \Iup[N]{f_{j}}{\widetilde{x}}$ (for a fixed $f_{j}$). Note that $\Iup[N]{f_{j}}{\widetilde{x}}$ is $\mu$-measurable by Proposition \ref{up_int_mu_measurable}, and 

\begin{equation}
     \lim_{N \rightarrow \infty} \Iup[N]{f_{j}}{\widetilde{x}} = \Av_\mu(f_{j})
\end{equation}
\\
for $\mu$-almost every $\widetilde{x} \in \tX$ by the Birkhoff--Khinchin Pointwise Ergodic Theorem for flows. 

Egorov-Severini Theorem tells us that (in particular) for $\eta = \frac{\zeta}{M_{\delta}}$ there exists a measurable set $E_{j}(\frac{\zeta}{M_{\delta}})\subset \tX$ with $\mu(E_{j}(\frac{\zeta}{M_{\delta}})) \geq 1 - \frac{\zeta}{M_{\delta}}$ and with 
$$\Iup[N]{f_{j}}{\widetilde{x}} \rightarrow  \Av_\mu(f_{j})$$
uniformly on $E_{j}(\frac{\zeta}{M_{\delta}})$.

Since this can be applied to each $f_{j}$ individually, we can consider the intersection

\begin{equation}
    E(\zeta) = \cap_{j =1}^{M_{\delta}}E_{j}\bigg(\frac{\zeta}{M_{\delta}}\bigg).
\end{equation}

Then 

\begin{equation}
    \mu(\tX \setminus E(\zeta)) \leq \sum_{j = 1}^{M_{\delta}}\mu\bigg(\tX\setminus E_{j}\bigg(\frac{\zeta}{M_{\delta}}\bigg)\bigg) < M_{\delta}\cdot\frac{\zeta}{M_{\delta}}  = \zeta,
\end{equation}

and 

\begin{equation}
    \mu(E(\zeta)) \geq 1 - \zeta.
\end{equation}
        
Since for each $j$ we have $E(\zeta) \subset E_{j}(\frac{\zeta}{M_{\delta}})$,
we have $$\Iup[N]{f_{j}}{\widetilde{x}} \rightarrow  \Av_\mu(f_{j})$$ uniformly on $E(\zeta)$ for every $j \in \{1,....,M_{\delta}\}$. So we have proved Proposition \ref{main_egorov_step}.
\end{proof}

We will now combine these $j$ separate uniform convergence statements into one using that there are only finitely many $j$-s. Expanding the main statement of Proposition \ref{main_egorov_step}, we can write that for every $\rho > 0$ and for each $f_{j}$ we have some natural $N_{j}(\delta, \zeta, \rho)$ such that for any $n > N_{j}(\delta, \zeta, \rho)$ we have

\begin{equation}
\sup_{\widetilde{x} \in E(\zeta)} |\Iup[n]{f_{j}}{\widetilde{x}} - \Av_\mu(f_{j})| < \rho.
\end{equation}

Let $N(\delta, \zeta, \rho) = \max_{j \in \{1, \ldots, M_{\delta}\}}N_{j}(\delta, \zeta, \rho)$. This will work for all $f_j$-s simultaneously. To sum this up, we have shown the following.

\begin{lemma}\label{lemma_at_middle_stage}
For any $0 < \zeta < 1$ there exists a $\mu$-measurable set $E(\zeta) \subset \tX$ with $\mu(E(\zeta)) \geq 1 - \zeta$ such that for any $\rho > 0$ there exists a natural number $N(\delta, \zeta, \rho)$ such that for any $n > N(\delta, \zeta, \rho)$, for any $\widetilde{x} \in E(\zeta)$ and for any $j \in \{1, \ldots, M_{\delta}\}$ 

\begin{equation}
    |\Iup[n]{f_{j}}{\widetilde{x}} - \Av_\mu(f_{j})| < \rho.
\end{equation}
\end{lemma}

We will now show that the statement of Lemma \ref{lemma_at_middle_stage} holds for all functions in $\mathcal{F}_{0}$. To be more precise, we will prove the following.

\begin{lemma}\label{lemma_upgrade_from_f_j_s_to_F_0}
For any $0 < \zeta < 1$ and for any $\rho > 0$ there exists a $\mu$-measurable set $E(\zeta) \subset \tX$ with $\mu(E(\zeta)) \geq 1 - \zeta$ such that there exists a natural number $N(\zeta, \rho)$ such that for any $n > N(\zeta, \rho)$, for any $\widetilde{x} \in E(\zeta)$ and for any $f \in \mathcal{F}_{0}$ 

\begin{equation}
    |\Iup[n]{f}{\widetilde{x}} - \Av_\mu(f)| < \rho.
\end{equation}
\end{lemma}

\begin{proof}
Let $f \in \mathcal{F}_{0}$. Take $\delta = \frac{\rho}{3}$ and build the corresponding finite cover and the functions $f_{1}, \ldots, f_{M_{\delta}}$. There is some $f_{j} \in \{f_{1}, \ldots, f_{M_{\delta}}\}$ such that

\begin{equation}
    ||f - f_j||_{\infty} < \delta.
\end{equation}

Let us compare the time and space averages of $f$ and $f_{j}$.  Using elementary properties of norms we get:
    \begin{multline*}
        |\Iup[n]{f}{\widetilde{x}} - \Av_\mu(f)| < |\Iup[n]{f}{\widetilde{x}} - \Iup[n]{f_{j}}{\widetilde{x}}|
        + |\Iup[n]{f_{j}}{\widetilde{x}} - \Av_\mu(f_{j})|  + |\Av_\mu(f_{j}) - \Av_\mu(f)|.
    \end{multline*}

We will now estimate each term in the expression above individually.  The first term
         $$|\Iup[n]{f}{\widetilde{x}} - \Iup[n]{f_{j}}{\widetilde{x}}| = |\frac{1}{n}\int_{0}^{n} (f-f_j)(\widetilde{v}_{t} \widetilde{x}) dt | \leq |\frac{1}{n}\int_{0}^{n} ||f-f_j||_{\infty} dt | \leq \frac{1}{n}\int_{0}^{n} \delta dt = \delta$$
For the second term, since $\widetilde{x} \in E(\epsilon)$, and $n > N(\delta, \zeta, \rho)$ we have

$$|\Iup[n]{f_{j}}{\widetilde{x}} - \Av_\mu(f_{j})| < \delta.$$
\\
Finally, the third term

 $$|\Av_\mu(f_{j}) - \Av_\mu(f)| = |\int_{\tX}(f_j - f) d\mu| \leq \int_{\tX}d\mu \cdot ||f-f_j||_{\infty} = 1 \cdot ||f-f_j||_{\infty} < \delta.$$

Combining these we get 
\begin{equation}
    |\Iup[n]{f}{\widetilde{x}} - \Av_\mu(f)| < 3 \delta = \rho.
\end{equation}

\end{proof}

Now that we have proved Lemma \ref{lemma_upgrade_from_f_j_s_to_F_0}, we will upgrade it again and show that a similar statement holds for all moments in time, not only for integer ones.

\begin{lemma}\label{lemma_upgrade_from_N_to_any_T}
For any $0 < \zeta < 1$ and for any $\eta > 0$ there exists a $\mu$-measurable set $E(\zeta) \subset \tX$ with $\mu(E(\zeta)) \geq 1 - \zeta$ such that there exists a real number $T(\zeta, \eta) > 0$ such that for any $t > T(\zeta, \eta)$, for any $\widetilde{x} \in E(\zeta)$ and for any $f \in \mathcal{F}_{0}$ 

\begin{equation}
    |\Iup[t]{f}{\widetilde{x}} - \Av_\mu(f)| < \eta.
\end{equation}
\end{lemma}

\begin{proof}

Fix $1> \zeta > 0$ and $\eta > 0$. Take $\rho = \frac{\eta}{2}$. By Lemma \ref{lemma_upgrade_from_f_j_s_to_F_0} there is a set $E(\zeta)$ with $\mu(E(\zeta))\geq 1- \zeta$ 
and an $N(\zeta, \rho) > 0$ such that for any $\widetilde{x} \in E(\zeta)$, for every integer $n > N(\zeta, \rho)$ and for any $f \in \mathcal{F}_{0}$ we have

\begin{equation}
    |\Iup[n]{f}{\widetilde{x}} - \Av_\mu(f)| < \rho.
\end{equation}

Now fix $t>0$, $\widetilde{x} \in \tX$ and $f \in \mathcal{F}_{0}$. Let $n := \lfloor t \rfloor$. We want to estimate $|\Iup[t]{f}{\widetilde{x}} - \Av_\mu(f)|.$  We can write 

\begin{equation}
    |\Iup[t]{f}{\widetilde{x}} - \Av_\mu(f)| \leq |\Iup[t]{f}{\widetilde{x}} - \Iup[n]{f}{\widetilde{x}}| + |\Iup[n]{f}{\widetilde{x}} - \Av_\mu(f)|.
\end{equation}

If $n > N(\zeta, \rho)$, then the second term is smaller than $\rho = \frac{\eta}{2}$.

To estimate the first term, we write:

\begin{multline}
    |\Iup[t]{f}{\widetilde{x}} - \Iup[n]{f}{\widetilde{x}}| = \bigg|\frac{1}{t} \int_{0}^{t} f(\widetilde{v}_s(\widetilde{x}))ds - \frac{1}{n} \int_{0}^{n} f(\widetilde{v}_s(\widetilde{x}))ds\bigg|  = \bigg|\frac{1}{t} \int_{0}^{n} f(\widetilde{v}_s(\widetilde{x}))ds + \\ + \frac{1}{t} \int_{n}^{t} f(\widetilde{v}_s(\widetilde{x}))ds - \frac{1}{n} \int_{0}^{n} f(\widetilde{v}_s(\widetilde{x}))ds\bigg| = \bigg|\bigg(\frac{1}{t} - \frac{1}{n}\bigg) \int_{0}^{n} f(\widetilde{v}_s(\widetilde{x}))ds + \frac{1}{t} \int_{n}^{t} f(\widetilde{v}_s(\widetilde{x}))ds \bigg| \leq\\
    \leq \bigg|\bigg(\frac{1}{t} - \frac{1}{n}\bigg)\bigg|nD + \frac{1}{t} (t-n)D =\\= \bigg|\bigg(\frac{n}{t} - 1\bigg)\bigg|D + \frac{1}{t} (t-n)D = 2 \frac{t-n}{t}D
\end{multline}

by Proposition \ref{F_zero_uniformly_bounded}. Since $t\in[n,n+1]$, we have $0 \leq t -n \leq 1 $. So 

$$2 \frac{t-n}{t}D \leq \frac{2D}{t} \leq \frac{2D}{n}$$

So combining all of that we get

$$|\Iup[t]{f}{\widetilde{x}} - \Av_\mu(f)| \leq \frac{2D}{n} + \frac{\eta}{2}$$

Now choose $T(\zeta, \eta)$ large enough so that whenever $t > T(\zeta, \eta)$ and $n = \lfloor t \rfloor$ we have both $n > N(\zeta, \rho)$ and $\frac{2D}{n}< \frac{\eta}{2}$.

 Note that this threshold only depends on $\zeta$, $\eta$ and the constant $D$. This immediately leads to 

$$|\Iup[t]{f}{\widetilde{x}} - \Av_\mu(f)| \leq \frac{\eta}{2} + \frac{\eta}{2} = \eta.$$

This proves Lemma \ref{lemma_upgrade_from_N_to_any_T}.
\end{proof}

Finally, to prove Lemma \ref{up_down}, we note that for any constant $c$ 

\begin{equation}
    \Iup[t]{f+c}{\widetilde{x}} = \Iup[t]{f}{\widetilde{x}} + c\text{ and  }
      \Idown[t]{f+c}{\widetilde{x}} = \Idown[t]{f}{\widetilde{x}} + c,
\end{equation}

and also
\begin{equation}
    \Av_\mu(f + c)  = \Av_\mu(f) + c.
\end{equation}

This means that $$\Iup[t]{f+c}{\widetilde{x}} - \Av_\mu(f + c) = \Iup[t]{f}{\widetilde{x}} - \Av_\mu(f),$$ and  $$\Idown[t]{f+c}{\widetilde{x}} - \Av_\mu(f + c) = \Idown[t]{f}{\widetilde{x}} - \Av_\mu(f).$$ Thus the statement of Lemma \ref{up_down} holds for all $f \in \Lip_{1}(\tX)$. This completes the proof of Theorem \ref{thm__almost-generic-points}.

\end{proof}
\end{proof}


\begin{definition}[Almost generic points]
We call a point $\tx\in \tX$ \emph{$\epsilon$-almost totally generic} if $\tx\in G_{\epsilon,\,T_\epsilon}$. 
\end{definition}

\begin{lemma}
\label{lemma__measure-eps-generic-symmetry}
Let $0<\epsilon<1$ and $C>0$. If a non-critical point $\tx\in\tX$ is
forward/backward/totally $(\epsilon,C)$-almost $\mu_i$-generic, then $\varsigma(\tx)$ is
forward/backward/totally $(\epsilon,C)$-almost $\mu_{i+1}$-generic, respectively. 
\end{lemma}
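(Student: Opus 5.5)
The plan is to follow the proof of Corollary \ref{cor__measure-generic-symmetry}, replacing the limit statement by the uniform $(\epsilon,C)$ estimate. Here ``forward $(\epsilon,C)$-almost $\mu_i$-generic'' means membership in $G^{\mu_i,\uparrow}_{\epsilon,C}$: for every $T>C$ and every $h\in\Lip_1(\tX)$,
$$
\bigl|\Iup[T]{h}{\tx}-\Av_{\mu_i}(h)\bigr|<\epsilon .
$$
The backward and total versions are defined analogously.

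\textbf{Step 1: $\varsigma$ is an isometry.} The key preliminary is that the deck transformation $\varsigma$ is an isometry of $\tX$ for the flat metric. It acts as a translation in local charts, since it leaves the $X$-coordinate intact and only shifts the sheet index. Consequently it maps flat geodesic paths to flat geodesic paths of the same length, and it is bijective with inverse $\varsigma^{N-1}$, which is also a local translation. It follows that $d(\varsigma\tx,\varsigma\tilde y)=d(\tx,\tilde y)$. Hence for every $h\in\Lip_1(\tX)$ the composition $h\circ\varsigma$ is again $1$-Lipschitz. This is the one place where something beyond the earlier corollary is used: the class $\Lip_1(\tX)$ must be closed under precomposition with $\varsigma$, not merely the class $\mathcal C(\tX)$. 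I expect this to be the main obstacle, although it is a mild one. The delicate points are the branch points $\widetilde P,\widetilde Q$, which $\varsigma$ fixes, and paths that cross the slit. Both are handled by the fact that $\varsigma$ is a global translation-surface automorphism, so path lengths are preserved.

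\textbf{Step 2: the forward estimate.} Fix $T>C$ and $h\in\Lip_1(\tX)$. Since $\varsigma$ commutes with the vertical flow, we have
$$
\Iup[T]{h}{\varsigma(\tx)}=\Iup[T]{h\circ\varsigma}{\tx}.
$$
Since $\varsigma_*\mu_i=\mu_{i+1}$ by Lemma \ref{lemma__n-measures}, we also have
$$
\Av_{\mu_{i+1}}(h)=\Av_{\mu_i}(h\circ\varsigma).
$$
Therefore
$$
\bigl|\Iup[T]{h}{\varsigma\tx}-\Av_{\mu_{i+1}}(h)\bigr|=\bigl|\Iup[T]{h\circ\varsigma}{\tx}-\Av_{\mu_i}(h\circ\varsigma)\bigr|<\epsilon .
$$
The final inequality is the hypothesis applied to the test function $h\circ\varsigma\in\Lip_1(\tX)$ from Step 1.

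\textbf{Step 3: the remaining cases.} The backward case is identical, using $\Idown[T]{\cdot}{\cdot}$ together with the fact that $\varsigma$ commutes with $\tv_{-s}$. The total case then follows by combining the forward and backward cases for the same measure. Non-criticality of $\varsigma(\tx)$ holds because $\varsigma$ maps singularities to singularities and is a bijection.
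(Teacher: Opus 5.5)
Your proposal is correct and follows essentially the same argument as the paper. Because $\varsigma$ is an isometry, $h\circ\varsigma\in\Lip_1(\tX)$; combining $\varsigma_*\mu_i=\mu_{i+1}$ with the commutation identity $\Iup[T]{h}{\varsigma(\tx)}=\Iup[T]{h\circ\varsigma}{\tx}$ and applying the hypothesis to $h\circ\varsigma$ gives the forward case, and the backward and total cases go the same way. Your extra justification of the isometry property and of the non-criticality of $\varsigma(\tx)$ only fills in points the paper asserts without proof.
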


\begin{proof}
We prove the forward statement. The backward statement is identical, and the totally
$(\epsilon,C)$-generic statement follows by combining the forward and backward cases.

Assume that $\tx$ is forward $(\epsilon,C)$-almost $\mu_i$-generic. We must show that
$\varsigma(\tx)$ is forward $(\epsilon,C)$-almost $\mu_{i+1}$-generic.

Let $h\in \Lip_1(\tX)$. Since $\varsigma$ is an isometry of $\tX$, the function
$h\circ\varsigma$ also belongs to $\Lip_1(\tX)$.

By Lemma~\ref{lemma__n-measures}, the permutation $\varsigma$ pushes $\mu_i$ forward to
$\mu_{i+1}$, i.e.
$$
\varsigma_*\mu_i=\mu_{i+1}.
$$
Therefore
$$
\Av_{\mu_i}(h\circ\varsigma)
=
\int_{\tX} h(\varsigma(y))\,d\mu_i(y)
=
\int_{\tX} h(z)\,d\mu_{i+1}(z)
=
\Av_{\mu_{i+1}}(h).
$$

Next, by symmetry of the construction, $\varsigma$ commutes with the vertical flow:
$$
\tv_t\circ\varsigma=\varsigma\circ\tv_t
\qquad\text{for all }t\in\bbR.
$$
Hence for every $T>0$,
$$
\Iup[T]{h}{\varsigma(\tx)}
=
\frac1T\int_0^T h(\tv_t(\varsigma(\tx)))\,dt
=
\frac1T\int_0^T h(\varsigma(\tv_t(\tx)))\,dt
=
\frac1T\int_0^T (h\circ\varsigma)(\tv_t(\tx))\,dt
=
\Iup[T]{h\circ\varsigma}{\tx}.
$$

Since $\tx$ is forward $(\epsilon,C)$-almost $\mu_i$-generic, it follows that for every
$T>C$,
$$
\left|
\Iup[T]{h\circ\varsigma}{\tx}
-
\Av_{\mu_i}(h\circ\varsigma)
\right|
<\epsilon.
$$
Using the two identities above, we obtain
$$
\left|
\Iup[T]{h}{\varsigma(\tx)}
-
\Av_{\mu_{i+1}}(h)
\right|
<\epsilon
\qquad\text{for all }T>C.
$$
Since this holds for every $h\in\Lip_1(\tX)$, the point $\varsigma(\tx)$ is forward
$(\epsilon,C)$-almost $\mu_{i+1}$-generic.

The backward case is proved in exactly the same way, using
$$
\Idown[T]{h}{\varsigma(\tx)}
=
\Idown[T]{h\circ\varsigma}{\tx},
$$
which again follows from $\tv_{-t}\circ\varsigma=\varsigma\circ\tv_{-t}$.

If $\tx$ is totally $(\epsilon,C)$-almost $\mu_i$-generic, then it is both forward and
backward $(\epsilon,C)$-almost $\mu_i$-generic, so the argument above shows that
$\varsigma(\tx)$ is both forward and backward $(\epsilon,C)$-almost $\mu_{i+1}$-generic. Hence
$\varsigma(\tx)$ is totally $(\epsilon,C)$-almost $\mu_{i+1}$-generic.
\end{proof}

\begin{lemma}\label{lemma__int-eps-gen-pts}
Let $I$ be a horizontal interval on $\tX$, and let $0<\epsilon<1$. Let $F_I:I\to I$ be the first return map of the vertical flow to $I$.

Then there exists a constant 
$C=C(I,\epsilon)>0$ such that
$$
\mcl{L}_I\bigl(G^{I}_{\epsilon,\,C}\bigr)
>(1-\epsilon)\mcl{L}_I(I).
$$
\end{lemma}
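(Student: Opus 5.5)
The plan is to transfer the area estimate of Theorem~\ref{thm__almost-generic-points} to the interval $I$ with a flow box, as in the proof of Lemma~\ref{lemma__int-gen-pts-ae}. The new difficulty is that $G_{\epsilon,C}$ is not invariant under the vertical flow. I expect it to be invariant up to a controlled loss in $\epsilon$ and $C$, and that loss is what $C=C(I,\epsilon)$ is meant to absorb. I read $G^{I}_{\epsilon,C}$ as $G_{\epsilon,C}\cap I$.

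\emph{Step 1 (approximate flow-invariance).} Fix $\tx$ and $0\le s\le \varepsilon_0$. For $h\in\Lip_1(\tX)$ we may replace $h$ by $h-h(\tx_0)$, since differences of time and space averages do not see constants. Then $|h|\le D=\operatorname{diam}(\tX)$ by Proposition~\ref{F_zero_uniformly_bounded}. Comparing the windows $[0,T]$ and $[s,s+T]$ gives
$$\bigl|\Iup[T]{h}{\tx}-\Iup[T]{h}{\tv_s\tx}\bigr|\le \frac{2sD}{T},$$
and the same bound holds for $\Idown[T]{h}{\cdot}$. Hence if $\tv_s\tx\in G_{\epsilon/2,T_{\epsilon/2}}$ and $C\ge T_{\epsilon/2}$ satisfies $4\varepsilon_0 D/C<\epsilon$, then $\tx\in G_{\epsilon,C}$, with the same ergodic measure $m$ witnessing genericity.

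\emph{Step 2 (flow box).} The interval $I$ meets the finite singular set in finitely many points, which have $\mathcal L_I$-measure zero. So, after removing small neighbourhoods of these points (costing at most $\tfrac{\epsilon}{4}\mathcal L_I(I)$), $I$ splits into finitely many compact subintervals $J_1,\dots,J_k$. On each $J_l$ there is a height $\varepsilon_0>0$, depending only on $I$, for which $\Phi(x,t)=\tv_t x$ is an injective, measure-preserving flow box on $J_l\times[0,\varepsilon_0]$. Let $B=\tX\setminus G_{\epsilon/2,T_{\epsilon/2}}$. Theorem~\ref{thm__almost-generic-points}, applied with a smaller parameter $\epsilon'\le \epsilon/2$, gives $\LtX(B)<\epsilon'$. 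By Fubini,
$$\int_0^{\varepsilon_0}\mathcal L_J\bigl(\{x\in J:\tv_t x\in B\}\bigr)\,dt\le \epsilon',\qquad J=\textstyle\bigcup_l J_l .$$
So some height $s\in[0,\varepsilon_0]$ has slice-bad-measure at most $\epsilon'/\varepsilon_0$.

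\emph{Step 3 (choosing constants).} Take $\epsilon'=\min\{\epsilon/2,\ \tfrac{\epsilon}{2}\varepsilon_0\mathcal L_I(I)\}$. Then choose $C=C(I,\epsilon)$ as in Step~1. By Step~1, every $x\in J$ with $\tv_s x\notin B$ lies in $G_{\epsilon,C}$. The measure of the remaining points of $I$ is therefore at most $\tfrac{\epsilon}{4}\mathcal L_I(I)+\tfrac{\epsilon}{2}\mathcal L_I(I)<\epsilon\mathcal L_I(I)$, which is the required bound. Measurability of $G_{\epsilon,C}$ follows because the defining condition can be checked on a countable dense set of $T$ and of functions in $\mathcal F_0$, using the compactness from Proposition~\ref{compact F 0}. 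The first return map $F_I$ should not be needed; it is only useful if one prefers to replace the flow box by the suspension over $I$.

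\emph{Main obstacle.} The delicate point is Step~1: making sure the error from shifting by $s$ is uniform over all of $\Lip_1(\tX)$ and over both time directions, and that the ergodic measure witnessing genericity does not change. Normalizing $h$ by its value at $\tx_0$ handles the first two. The third is automatic because the same $m$ is used throughout. A secondary issue is that $\varepsilon_0$ may be tiny when $I$ passes near singularities. This is harmless, since $C$ is allowed to depend on $I$.
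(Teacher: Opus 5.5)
Your argument is sound, but it proves the lemma for a different object and by a different route than the paper.

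The paper reads $G^{I}_{\epsilon,C}$ as the set of $\epsilon$-almost totally generic points of the induced interval exchange $F_I$ (compare Lemma~\ref{lemma__centr-symm-eps-gen-pts}). It then simply re-applies Theorem~\ref{thm__almost-generic-points} to the discrete system $(I,F_I,\mathcal{L}_I)$. That is a one-line proof, but it tacitly requires a discrete-time version of the theorem. The Arzel\`a--Ascoli/Egorov argument has to be redone for a map, with the finitely many $F_I$-ergodic measures induced by the $\mu_j$ in place of the $\mu_j$, and with $\mathcal{L}_I$ renormalized.

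You instead take $G^{I}_{\epsilon,C}=G_{\epsilon,C}\cap I$ for the flow and use Theorem~\ref{thm__almost-generic-points} exactly as proved. You pass from area to the transversal in two steps: the shift estimate $2sD/T$, which gives approximate flow-invariance with the same witnessing measure, and a Fubini choice of a good slice in a flow box. This buys three things:
\begin{enumerate}
\item nothing has to be re-proved;
\item the conclusion is flow-genericity, which is what Section~\ref{UE-sect3} actually consumes (Lemma~\ref{lemma__A-and-B-limits} uses flow averages over times $L_i\ge T_{\epsilon_i}$);
\item the argument works verbatim for any non-vertical transversal, such as the slit used in Part III, which need not be horizontal.
\end{enumerate}
The price is that $C$ depends on $I$ through $T_{\epsilon'}$ with $\epsilon'\approx\epsilon\varepsilon_0\mathcal{L}_I(I)$. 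You also get no statement about genericity for $F_I$. So under the paper's reading of $G^{I}_{\epsilon,C}$ you have proved a neighbouring statement rather than the same one.

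Some bookkeeping needs fixing.
\begin{enumerate}
\item Theorem~\ref{thm__almost-generic-points} at parameter $\epsilon'$ controls $G_{\epsilon',T_{\epsilon'}}\subseteq G_{\epsilon/2,T_{\epsilon'}}$. So $B$ should be the complement of one of these, and you need $C\ge T_{\epsilon'}$ rather than $C\ge T_{\epsilon/2}$. This is harmless, since $C$ is chosen after $\epsilon'$.
\item $\LtX$ has total mass $1$, while the flow box carries unnormalized area. The Fubini bound therefore picks up a factor $\operatorname{Area}(\tX)$, which you should absorb into $\epsilon'$.
\item The flow box must be injective on all of $J\times[0,\varepsilon_0]$ simultaneously. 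Take $\varepsilon_0$ below the minimal return time of the compact set $J$ to itself, or else sum the slice bounds over the $J_l$.
\item Testing on a countable dense family does not prove $G_{\epsilon,C}$ measurable; it only sandwiches it between $\{\sup<\epsilon\}$ and $\{\sup\le\epsilon\}$. This suffices, because Step~1 actually places your good points in the measurable inner set.
\end{enumerate}
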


\begin{proof}
Consider the first return map $F_I:I\to I$ of the vertical flow to $I$.
This is an interval exchange transformation preserving the Lebesgue measure
$\mcl{L}_I$ on $I$.

Apply the quantitative almost-genericity Theorem \ref{thm__almost-generic-points} to the measure-preserving system
$(I,F_I,\mcl{L}_I)$. It yields a constant $C=C(I,\epsilon)$ such that the set
of $\epsilon$-almost totally generic points for $F_I$ has $\mcl{L}_I$-measure greater than
$(1-\epsilon)\mcl{L}_I(I)$. 
\end{proof}

\begin{lemma}\label{lemma__centr-symm-eps-gen-pts}
Let $I$ be a horizontal interval on $\tX$, and let $0<\epsilon<\frac12$.
Then there exists a constant $C=C(I,\epsilon)>0$ and a pair of points in $I$,
symmetric with respect to the center of $I$, that are both $\epsilon$-almost totally generic
for the first return map on $I$.
\end{lemma}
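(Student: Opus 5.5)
The proof will mirror Lemma \ref{lemma__centr-symm-gen-pts}, with the full-measure statement replaced by the quantitative bound of Lemma \ref{lemma__int-eps-gen-pts}. Let $C=C(I,\epsilon)$ be the constant from Lemma \ref{lemma__int-eps-gen-pts}, and write $G:=G^{I}_{\epsilon,\,C}\subset I$ for the set of $\epsilon$-almost totally generic points for the first return map $F_I$. By that lemma,
$$
\mcl{L}_I(G)>(1-\epsilon)\,\mcl{L}_I(I).
$$

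Let $\mathfrak r:I\to I$ be the reflection across the center $c$ of $I$. Since $\mathfrak r$ is an isometric involution of $I$, it preserves $\mcl{L}_I$, so $\mcl{L}_I(\mathfrak r^{-1}(G))=\mcl{L}_I(G)$. Taking complements and applying a union bound gives
$$
\mcl{L}_I\bigl(I\setminus (G\cap\mathfrak r^{-1}(G))\bigr)\le 2\bigl(\mcl{L}_I(I)-\mcl{L}_I(G)\bigr)<2\epsilon\,\mcl{L}_I(I).
$$
This is exactly where the hypothesis $\epsilon<\tfrac12$ is used. It gives $\mcl{L}_I(G\cap\mathfrak r^{-1}(G))>(1-2\epsilon)\mcl{L}_I(I)>0$.

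Removing the null set $\{c\}$ leaves a nonempty set. Pick any $x\in G\cap\mathfrak r^{-1}(G)\setminus\{c\}$. Then $x\in G$ and $\mathfrak r(x)\in G$, and $x\neq\mathfrak r(x)$ because $c$ is the only fixed point of $\mathfrak r$. So $x$ and $\mathfrak r(x)$ are two distinct points of $I$, symmetric about its center, and both are $\epsilon$-almost totally generic for $F_I$. The same constant $C$ serves both points, since it came from a single application of Lemma \ref{lemma__int-eps-gen-pts}.

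The only step that needs care is the measure bookkeeping. The complement of $G$ must have measure strictly less than $\tfrac12\mcl{L}_I(I)$, so that the two complements cannot cover $I$. I would check that Lemma \ref{lemma__int-eps-gen-pts} supplies a strict inequality relative to $\mcl{L}_I(I)$ rather than to a normalized measure. With that confirmed, the argument is a direct pigeonhole on measures. Measurability of $G$ is inherited from Lemma \ref{lemma__int-eps-gen-pts}.
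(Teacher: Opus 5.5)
Your proposal is correct and follows the paper's argument. Both use Lemma~\ref{lemma__int-eps-gen-pts} to get $\mcl{L}_I(G^I_{\epsilon,C})>\tfrac12\mcl{L}_I(I)$, and then note that this set and its reflection across the center must intersect; the paper phrases this as a disjointness contradiction and you phrase it as a union bound on complements. Your version is slightly more careful, because obtaining an intersection of positive measure lets you discard the center and guarantee that the two symmetric points are distinct, which the paper's proof leaves implicit.
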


\begin{proof}
Let $R:I\to I$ be reflection across the center of $I$. Since $R$ preserves
$\mcl{L}_I$, for every measurable subset $A\subset I$ one has
$$
\mcl{L}_I(R(A))=\mcl{L}_I(A).
$$

By Lemma~\ref{lemma__int-eps-gen-pts}, there exists $C=C(I,\epsilon)$ such that
$$
\mcl{L}_I\bigl(G^I_{\epsilon,\,C}\bigr)>(1-\epsilon)\mcl{L}_I(I).
$$
Since $\epsilon<\frac12$, this implies
$$
\mcl{L}_I\bigl(G^I_{\epsilon,\,C}\bigr)>\frac12\,\mcl{L}_I(I).
$$
Therefore the two sets $G^I_{\epsilon,\,C}$ and $R(G^I_{\epsilon,\,C})$ cannot be disjoint,
because otherwise
$$
\mcl{L}_I\bigl(G^I_{\epsilon,\,C}\bigr)+
\mcl{L}_I\bigl(R(G^I_{\epsilon,\,C}))\bigr)
\le \mcl{L}_I(I),
$$
contradicting the fact that each of these two sets has measure
$\mcl{L}_I(G^I_{\epsilon,\,C})>\frac12\,\mcl{L}_I(I)$.

Hence there exists $\tx\in G^I_{\epsilon,\,C}\cap R(G^I_{\epsilon,\,C})$. Then both
$\tx$ and $R(\tx)$ belong to $G^I_{\epsilon,\,C}$, and these two points are symmetric
with respect to the center of $I$. 
\end{proof}

\section{The circle argument}\label{UE-sect3}

As before, let $\pi: \tX\to X$ be the slit $\mathrm{N}$-cover with ramification points $\widetilde{P}=\pi^{-1} P$ and $\widetilde{Q}=\pi^{-1} Q$. For an $r>0$, let $\mathrm{U}(x, r)$ denote the $r$-neighborhood of the point $x$ in the flat metric.  Denote the \Teichmuller~flow by $g_t$, and write $X_t=g_t(X)$, and $\tX_t=g_t(\tX)$. Similarly, we set $P_{t} = g_{t}P$, $Q_{t} = g_{t}Q$.

Our main theorem in this section is the following geometric criterion for unique ergodicity.

\begin{theorem}\label{thm__The-CIRCLE_CRITERION}
Consider one of the slit endpoints on $X$ (without loss of generality, call it $Q$). Suppose that the following key assumption holds: there exist a number $R>0$ and a sequence of moments in time $\{t_k\}_{k\in\bbN_0}$, with $\lim_{k\to\infty}t_k=+\infty$, such that the $R$-neighborhood $\mathrm{U}_{t_k}(Q_{t_{k}}, R)\subset X_{t_k}$ of $Q_{t_k}$ is an embedded disk that does not contain $P_{t_k}=g_{t_{k}}P$.
Then the surface $\widetilde{X}$ is uniquely ergodic.
\end{theorem}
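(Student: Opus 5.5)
We argue by contradiction. Suppose $\tX$ is not uniquely ergodic; by Lemma~\ref{lemma__n-measures} it then carries $d\ge2$ ergodic measures $\mu_0,\dots,\mu_{d-1}$, cyclically permuted by $\varsigma$. By Lemma~\ref{lemma__core-1-Lipschitz-distinct-averages} we fix a $1$-Lipschitz function $\mathfrak h$ whose averages are pairwise distinct. Let $\delta$ be the minimum of the gaps $|\Av_{\mu_i}(\mathfrak h)-\Av_{\mu_j}(\mathfrak h)|$. By Lemma~\ref{lemma__Teichmuller-preserves-genericity}, the $\mu_i$ are transported to ergodic measures on each $\tX_{t_k}$. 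Genericity is preserved under this transport, and the averages of $\mathfrak h\circ g_{t_k}^{-1}$ are unchanged. We therefore work on $\tX_{t_k}$ for a large fixed $k$.

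On $X_{t_k}$ the disk $\mathrm U=\mathrm U(Q_{t_k},R)$ is embedded and avoids $P_{t_k}$. Its preimage $\widetilde{\mathrm U}=\pi^{-1}(\mathrm U)$ is therefore a single cone of angle $2\pi\mathrm N$ around $\widetilde Q$, with $\varsigma$ acting as rotation by $2\pi$. Only the slit germ at $Q$ enters $\mathrm U$; it is a straight radius, since $P\notin\mathrm U$ and the slit is straight. Take a horizontal segment $I\subset\mathrm U$ centred at $Q_{t_k}$ of length comparable to $R$. It crosses the slit only at $Q_{t_k}$.

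Lift the two half-segments to $\tX_{t_k}$. The half-segment to the right of $\widetilde Q$ on sheet $i$ continues, after turning by angle $\pi$ around $\widetilde Q$, into the left half-segment on sheet $i$ or $i\pm1$, depending on which side the slit lies. This gives a lifted horizontal segment $\widetilde I$ through $\widetilde Q$ whose two endpoints lie over the two ends of $I$.

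Now apply Lemma~\ref{lemma__centr-symm-eps-gen-pts} (or its exact version, Lemma~\ref{lemma__centr-symm-gen-pts}) to $I$ downstairs, lifted along $\widetilde I$. This gives symmetric points $\tx,\tx'$ on $\widetilde I$ at the same distance $a$ from $\widetilde Q$ on opposite sides, both totally (or $\epsilon$-almost totally) generic.

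The key geometric step uses vertical segments of height about $R$ through $\tx$ and $\tx'$ inside $\widetilde{\mathrm U}$. Flowing $\tx$ upward while flowing $\tx'$ downward, and comparing with the rotation by $\pi$ about $\widetilde Q$, we relate the forward orbit of one point to the backward orbit of the other, up to one application of $\varsigma$.

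Concretely, consider the vertical segment from $\tx$ going up and the vertical segment from $\tx'$ going up, and follow the one passing around $\widetilde Q$. The upward vertical line through $\tx$ leaves the disk at the top on some sheet. The one through $\tx'$, with $\tx'$ on the other side of the slit germ, reaches the same top point but on sheet shifted by one, because exactly one of the two vertical paths crosses the slit radius. The two paths agree outside $\widetilde{\mathrm U}$ up to $\varsigma$. Hence their forward orbits after the exit time $\le 2R$ differ exactly by $\varsigma$, and the same holds for their backward orbits.

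On $\tX_{t_k}$ the first-return structure has been rescaled, and the contraction of vertical lengths by $e^{-t_k}$ is what makes the fixed-radius disk meaningful. Total genericity of $\tx$ for some $\mu_i$ then forces $\tx'$ to be forward $\mu_{i\pm1}$-generic by Corollary~\ref{cor__measure-generic-symmetry}. A finite initial segment of length $\le 2R$ does not affect the limits.

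This alone does not give a contradiction, since $\tx'$ may indeed be $\mu_{i+1}$-generic. The argument must compare forward and backward behaviour. For one of the two points the upward path crosses the slit germ while the downward path does not, since the slit germ is a single radius lying either above or below the horizontal line through $Q$. For that point the forward and backward orbits are matched to the other point's orbits with different sheet shifts: one matching uses $\varsigma$, the other uses the identity. This yields a single point that is forward $\mu_i$-generic and backward $\mu_{i+1}$-generic.

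For exactly generic points this contradicts Corollary~\ref{cor__no-double-genericity}, combined with the fact that totally generic points have equal forward and backward measures. In the $\epsilon$-almost version it contradicts the choice $\epsilon<\delta/2$, because $|\Av_{\mu_i}(\mathfrak h)-\Av_{\mu_{i+1}}(\mathfrak h)|\ge\delta$.

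\textbf{Main obstacle.} The hardest step is making this sheet bookkeeping precise. We must check that, inside the embedded cone, the vertical paths through the two symmetric points differ by exactly one slit crossing in one time direction and by zero crossings in the other. This requires handling the slit direction relative to the horizontal and using that $P_{t_k}\notin\mathrm U$.

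A second issue is that with exact genericity a single $k$ suffices. The uniform $\epsilon$-almost version and the sequence $t_k\to\infty$ are needed only if the matching holds merely for long but finite time windows. In that case I would choose $k$ large so that $C(I,\epsilon)$ is small relative to the window provided by the disk, and apply the estimate for all $T>C$.
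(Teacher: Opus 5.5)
The overall architecture is the paper's: argue by contradiction, view $\pi^{-1}(\mathrm U)$ as a cone of angle $2\pi\mathrm N$ on which $\varsigma$ is rotation by $2\pi$, match forward orbits via the identity and backward orbits via $\varsigma^{\pm1}$, and then play total genericity against Corollary~\ref{cor__measure-generic-symmetry}. However, the step that is supposed to produce the matching is false.

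Your points $\tx,\tx'$ lie at horizontal offsets $\pm a$ from $\widetilde Q$, so their vertical trajectories are parallel at distance $2a>0$. They do not ``reach the same top point''; they leave $\widetilde{\mathrm U}$ at different points. Afterwards a singularity or the slit may pass between them, and from then on their orbits are unrelated. Rotation by $\pi$ about $\widetilde Q$ is only a local isometry of the cone, not a symmetry of $\tX$ commuting with the flow, so it cannot relate a forward orbit to a backward one either. Hence there is no exact statement that ``the orbits differ by $\varsigma$ after time $2R$,'' and exact genericity of $\tx,\tx'$ at one time yields nothing.

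In fact no single-time argument can work. Unique ergodicity of $\tX$ is $g_t$-invariant, so such an argument would show that the disk hypothesis at $t=0$ alone implies unique ergodicity. Veech's 1969 example refutes this: a short slit on a torus, where a small disk about $Q$ missing $P$ obviously exists at $t=0$, yet the cover is not uniquely ergodic. Your remark that ``with exact genericity a single $k$ suffices'' is exactly where the proof breaks.

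What does hold is a quantitative comparison over the finite window inside the disk, and this is the content of the paper's proof. Pull back by $g_{t_k}^{-1}$, and write $\tx,\tx'$ also for the pulled-back points. The two upward segments become vertical segments in $\tX$ of length $L_k\ge\frac{\sqrt3}{2}e^{t_k}R$ (for $a\le R/2$), at constant horizontal distance $2ae^{-t_k}\le e^{-t_k}R$. So $|\Iup[L_k]{\mathfrak h}{\tx}-\Iup[L_k]{\mathfrak h}{\tx'}|\le e^{-t_k}R$ for the $1$-Lipschitz $\mathfrak h$, and likewise backward with $\tx'$ replaced by $\varsigma^{\pm1}(\tx')$.

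To turn this into information about measures, you need almost genericity that is uniform in $T$ beyond a threshold \emph{independent of $k$}. This is $T_\epsilon$ from Theorem~\ref{thm__almost-generic-points} on $\tX$. One then takes $k$ with $\frac{\sqrt3}{2}e^{t_k}R>T_\epsilon$, which is the paper's condition~\eqref{eq__condition-t_k_i} and the real reason $t_k\to\infty$ is needed. Your fallback, making $C(I,\epsilon)$ from Lemma~\ref{lemma__int-eps-gen-pts} small relative to the window, is circular: $I$, and hence that constant, changes with $k$. That lemma also concerns the first-return map, not flow averages of Lipschitz functions.

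Suppose $\tx$ is $\epsilon$-almost totally $\mu_i$-generic and $\tx'$ is $\epsilon$-almost totally $\mu_j$-generic, so $\varsigma^{\pm1}(\tx')$ is $\epsilon$-almost $\mu_{j\pm1}$-generic by Lemma~\ref{lemma__measure-eps-generic-symmetry}. Then $|\Av_{\mu_i}(\mathfrak h)-\Av_{\mu_j}(\mathfrak h)|<2\epsilon+e^{-t_k}R$, and the same holds with $j\pm1$. Taking $\epsilon<\delta/4$ and $k$ large forces $i=j=j\pm1$, a contradiction. The symmetric pair must be chosen among such points, which needs $\epsilon$ small compared with the area $\pi R^2$ of the disk.

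Two smaller points:
\begin{enumerate}
\item The slit may re-enter $\mathrm U$ as chords parallel to the germ, so ``only the germ enters $\mathrm U$'' is false. The cone picture still survives, because a loop around $Q$ crosses each chord twice with opposite signs; this is the paper's regluing of stripes.
\item Your sheet bookkeeping is self-contradictory. In the cone, if $\tx'$ is reached from $\tx$ by turning through $\pi$ across the upper half-disk, both upward segments lie in one flat half-disk (matched by the identity). The downward segments lie in half-disks exchanged by $\varsigma^{\pm1}$. This is the paper's configuration $A$, $B$, $C=\varsigma^{-1}(A)$.
\end{enumerate}
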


\begin{figure}[h]
\centering
\includegraphics[width=.5\linewidth]{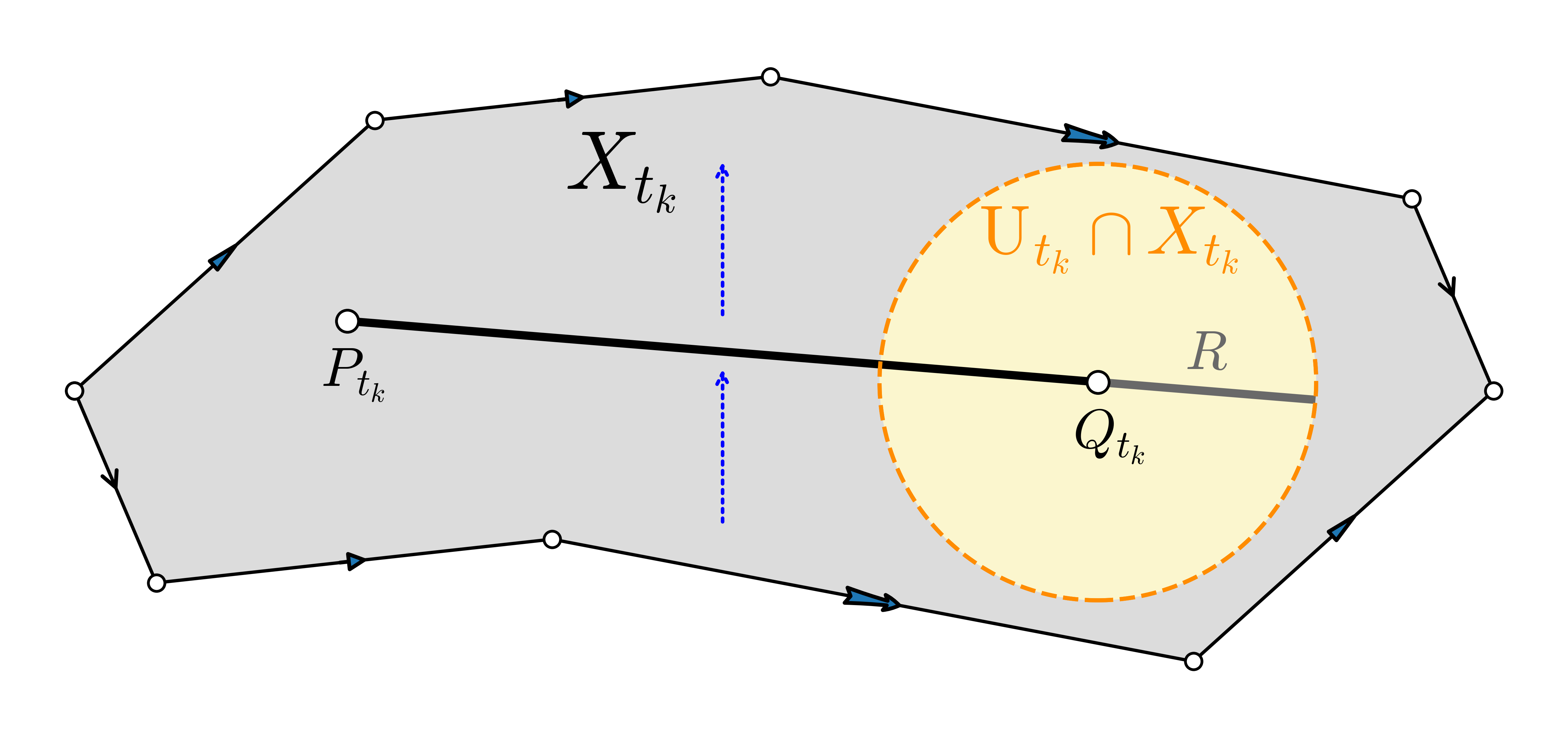}
\caption{Example of the key assumption of Theorem \ref{thm__The-CIRCLE_CRITERION}.}
\label{fig__X}
\end{figure}

Without loss of generality, we may assume that $t_0=0$ and choose the local coordinates around the slit endpoint $\widetilde{Q}$ so that it is the origin for the purposes of applying the \Teichmuller~flow. Suppose that $\tX$ is non-uniquely ergodic, and denote the normalized ergodic measures on it by $\mu_0,\mu_1,\,\ldots,\,\mu_{d-1}$, where $d\geq 2$ and $d\, |\,\mathrm{N}$.  We will prove the theorem by contradiction.

\begin{figure}[h]
\centering
\begin{subfigure}{.49\textwidth}
  \centering
  \includegraphics[width=.9\linewidth]{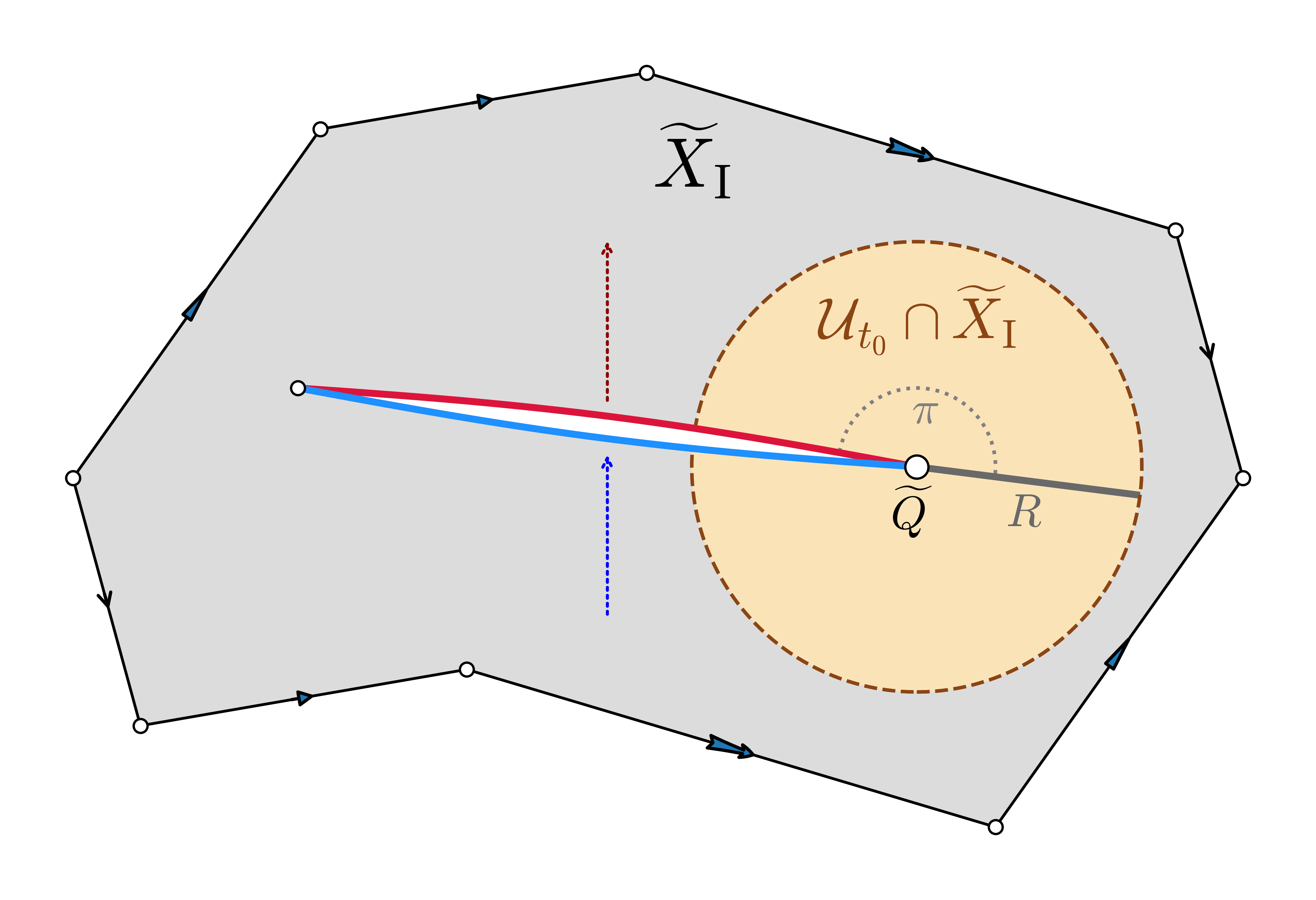}
  \label{}
\end{subfigure}%
\begin{subfigure}{.49\textwidth}
  \centering
  \includegraphics[width=.9\linewidth]{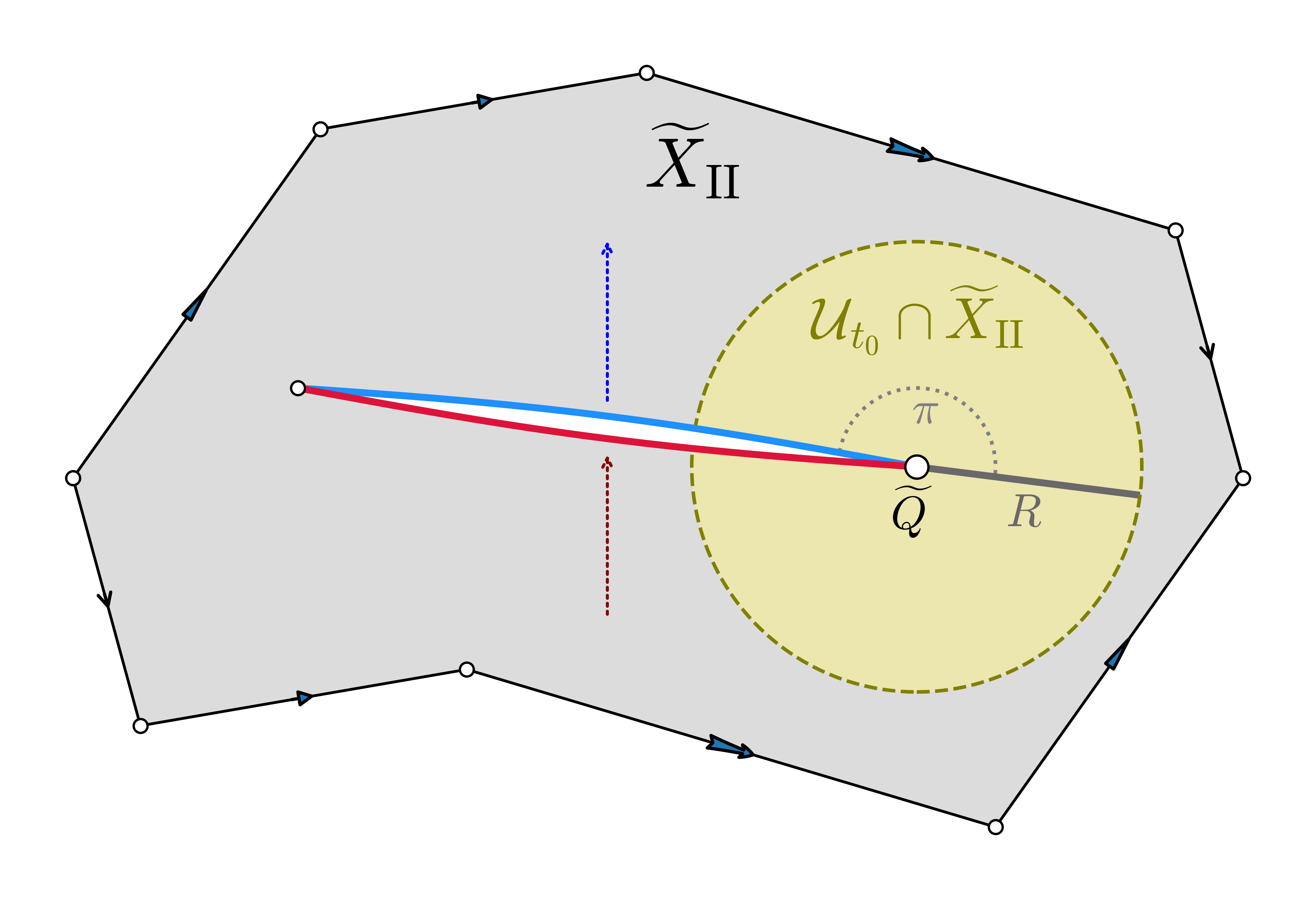}
  \label{}
\end{subfigure}
\caption{Example of the $R$-neighborhood $\mcl{U}_{t_0}$  of the endpoint $\widetilde{Q}$ of a slit on the surface $\tX_t$ for $\mathrm{N}=2$. }
\label{fig__choosing-neighbourhoods}
\end{figure}

\begin{proof}

Denote by $\mcl{U}_{t_k}$ the biggest neighborhood of $\widetilde{Q}_{t_k}=g_{t_k}\widetilde{Q}$ such that $\pi(\mcl{U}_{t_k})=\mathrm{U}_{t_k}$ (see Figure \ref{fig__choosing-neighbourhoods} for the visual representation).

\emph{I. Passing to a subsequence of times $\{t_{k_i}\}_{i\in\bbN_0}$.}

First, we pick a certain subsequence $\{t_{k_{i}}\}_{i\in\bbN_0}$ of the sequence $\{t_{k_i}\}_{i\in\bbN_0}$. Let us consider the sequence $\{\epsilon_{i} \}_{i\in\bbN}$, $\epsilon_i=1/(i+2)$ of reciprocals of natural numbers. The first element we pick is simply $t_0=0$. Starting from $i=1$, for each $\epsilon_{i}$, we pick a moment in time $t_{k_{i}}$ such that 
\begin{equation}\label{eq__condition-t_k_i}
\frac{\sqrt{3}}{2}e^{t_{k_{i}}}R\; > \; \max\{T_{\epsilon_{i}},\, T_{\epsilon_{i+1}}\},
\end{equation}
where $T_{\epsilon_{i}}$ and $T_{\epsilon_{i+1}}$ are the constants from Theorem \ref{thm__almost-generic-points}
corresponding to $\epsilon = \epsilon_{i}$ and $\epsilon = \epsilon_{i+1}$ respectively (the motivation from this, admittedly somewhat obscure, choice will become apparent in Part V of the proof; more specifically, in Lemma \ref{lemma__A-and-B-limits}). Moreover, we pick our new elements $t_{k_{i}}$ in such a way that the subsequence remains strictly increasing. 

For the sake of simplicity, we will abuse our notation and denote the resulting subsequence of times $t_{k_i}$ by $\{t_{i}\}_{i\in\bbN_0}$. We will also sometimes write $\tX_{t_i} = \tX_{i}$.

\begin{figure}[h]
\centering
\begin{subfigure}{.49\textwidth}
  \centering
  \includegraphics[width=.99\linewidth]{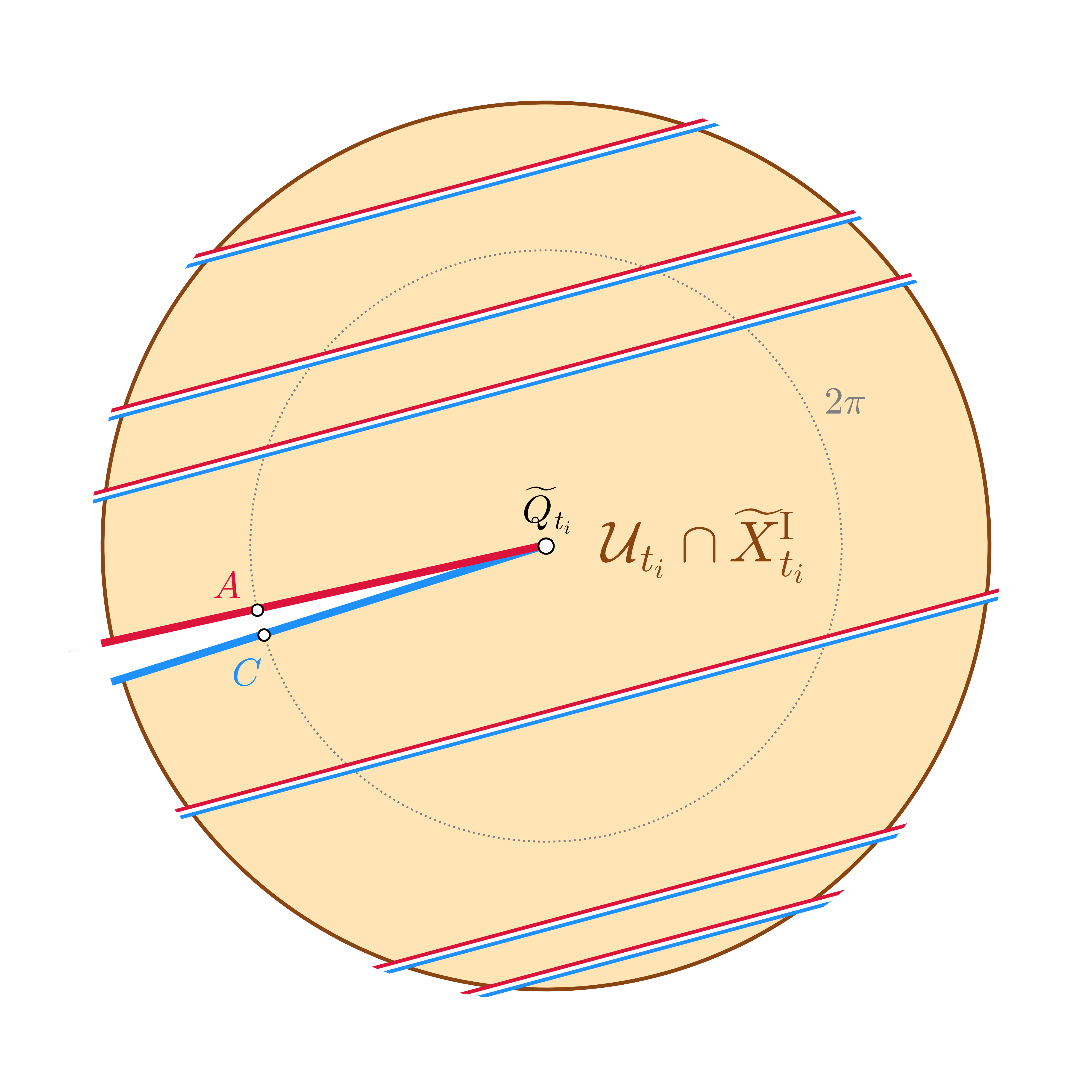}
  \label{}
\end{subfigure}%
\begin{subfigure}{.49\textwidth}
  \centering
  \includegraphics[width=.99\linewidth]{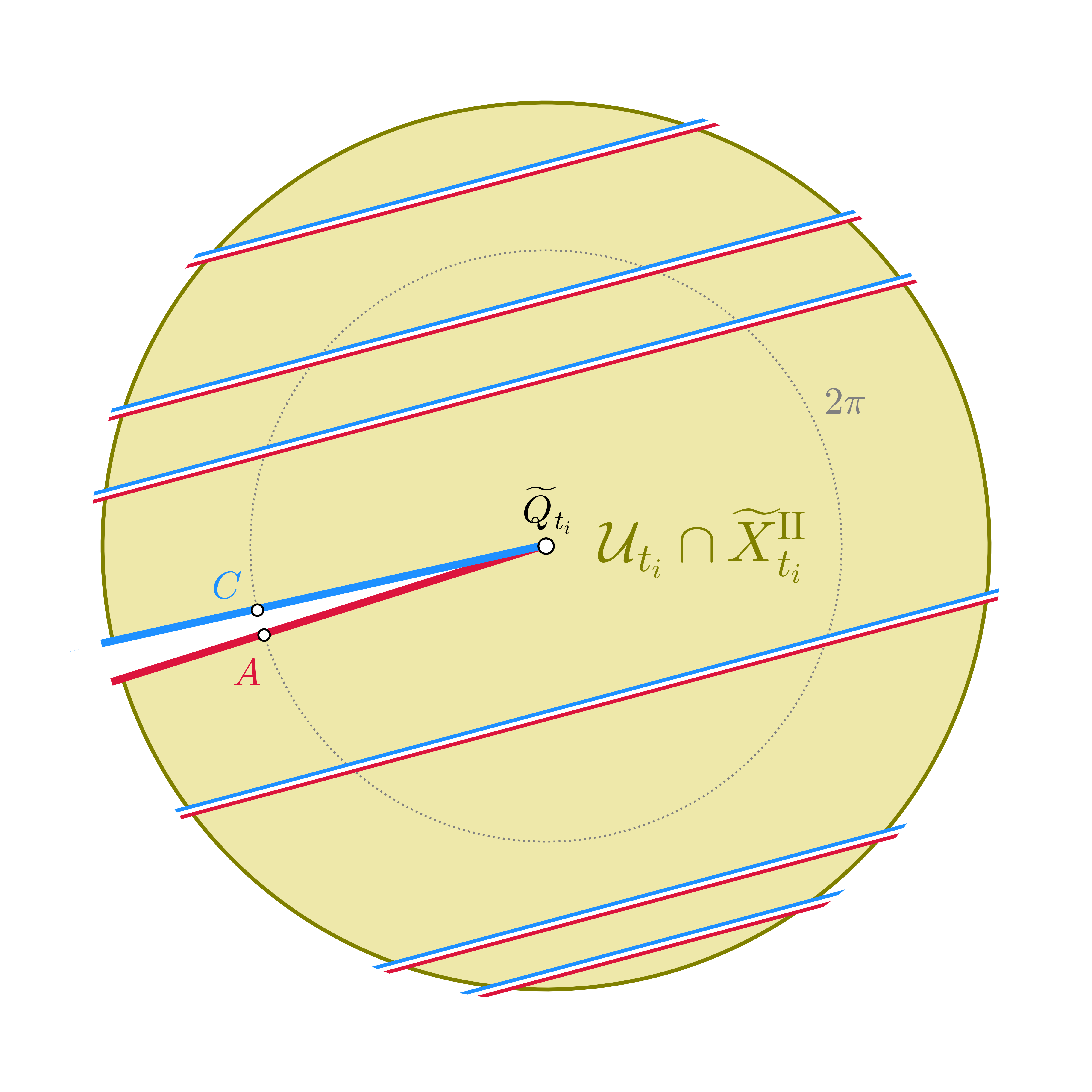}
  \label{}
\end{subfigure}
\caption{Neighborhood of the endpoint $O$ of radius $r$ inside $\tX_{t_i}$ for $\mathrm{N}=2$.}
\label{fig__neighorhood}
\end{figure}

\emph{II. The neighborhood $\mcl{U}_{t_i}$.}

Take a point $A\neq \widetilde{Q}_{t_i}$ on the intersection of $\mcl{U}_{t_i}$ with the slit. Consider the clockwise path around $\widetilde{Q}_{t_i}$ on $\tX_{t_i}$ starting from $A$. It is clear that, after rotating by $2\pi$, we will arrive at $C=\varsigma^{-1}(A)$, and after rotating by the angle $2M\pi$, we will return back to $A$. In this process, we may cross the slit again some number of times. Every time we do so, we jump between the sheets of $\tX_{t_i}$. However, since the neighbourhoods $U_{t_i}\cap \tX_{t_i}^j$ are clearly symmetric for each copy $j$, inside $\mcl{U}_{t_i}$, we can reglue the stripes formed by the slit. Therefore, $\mcl{U}_{t_i}$ is simply a glueing of $\mathrm{N}$ radius $r_i$ circles centered at $\widetilde{Q}_{t_i}$ and cut along their parallel radii. We will denote those  circles by $\mcl{U}^j_{t_i}$, $0\leq j\leq \mathrm{N}-1$. Take two adjacent ones, which we denote by $\mcl{U}^+_{t_i}$ and $\mcl{U}^-_{t_i}$ (indexing can be chosen, for example, by fixing a side of the slit and stating that $\mcl{U}^+_{t_i}$ corresponds to counterclockwise rotation by at most $2\pi$ and $\mcl{U}^-_{t_i}$ corresponds to clockwise rotation by at most $2\pi$). See Figure \ref{fig__neighorhood} for the visual representation.

\emph{III. Geometry inside $\mcl{U}^+_{t_i}$.}


In this part, we aim to build a special sequence of pairs of vertical intervals on the initial surface $\tX$. 

For a fixed $i\in\bbN_0$, consider the neighborhood $\mcl{U}_{t_i}$ of the slit endpoint $\widetilde{Q}_{t_i}$. As before, we will represent this neighborhood as $\mathrm{N}$ disks, cut and glued along a symmetric radius segment determined by the slit. Let us first describe all our geometric constructions for the forward vertical flow on $\mcl{U}^+_{t_i}$; from symmetry, geometric constructions on $\mcl{U}^-_{t_i}$, as well as those for the backward vertical flow, will repeat these. See Figure \ref{fig__X_i-circle} for the visual reference.

Prolong the slit to a diameter of $\mcl{U}^+_{t_i}$. More specifically, starting from the ``top'' part of the slit, we rotate it counterclockwise inside $\mcl{U}^+_{t_i}$ around the endpoint $\widetilde{Q}_{t_i}$ in and consider the separatrix segment that goes in the direction of the slit and is contained in $\mcl{U}^+_{t_i}$. 

By Lemma \ref{lemma__centr-symm-gen-pts}, Corollary \ref{cor__measure-generic-symmetry}, as well as Theorem \ref{thm__almost-generic-points} and Lemmas \ref{lemma__int-eps-gen-pts}, \ref{lemma__centr-symm-eps-gen-pts}, we can find points $\hhat{A}_{i}$ on the top side of the slit, $\hhat{C}_i=\varsigma(\hhat{A}_{i})$ on the bottom side of the slit, and $\hhat{B}_{i}$ on the separatrix such that they are all totally generic (possibly for different measures), $\epsilon_i$-almost totally generic (from Theorem \ref{thm__almost-generic-points}, since the area of the neighborhood $\mcl{U}_{t_i}$ is strictly bigger than $2\epsilon_i$ by assumption), and satisfy  $0<\hhat{r}_i:=|\widetilde{Q}_{t_i}\hhat{A}_{i}|=|\widetilde{Q}_{t_i}\hhat{B}_{i}|< R/2$. 

Let $\hhat{A}_{i}'$ and $\hhat{B}_{i}'$, respectively, be the intersections of the vertical projections on the horizontal diameter of $\mcl{U}_{t_i}$ (note that  $\hhat{A}_{i}'$ may belong to $\mcl{U}^-_{t_i}$ dependent on the angle between the slit and the vertical flow). We will denote by $\hhat{H}_i$ the signed vertical distance from $\hhat{B}_{i}$ to $\hhat{B}_{i}'$ (i.e. $\hhat{H}_i$ is positive if $\hhat{B}_{i}'=v_{|\hhat{H}_i|}\hhat{B}_{i}$ and negative if $\hhat{B}_{i}=v_{|\hhat{H}_i|}\hhat{B}_{i}'$). Clearly, the signed vertical distance from $\hhat{A}_{i}$ to $\hhat{A}_{i}'$ is $-\hhat{H}_i$.

Let $\hhat{M}_{i}^{+,\,top}$ and $\hhat{N}_{i}^{+,\,top}$, respectively, be the points of first intersection with the boundary of $\mcl{U}^+_{i}$ of the forward vertical trajectories starting at the points $\hhat{A}_{i}$ and $\hhat{B}_{i}$. We will denote by $\hhat{L}_i$ the vertical distance from $\hhat{B}_{i}'$ to $\hhat{M}_{i}^{+,\,top}$. Note that $\frac{\sqrt{3}}{2}r_i\leq \hhat{L}_i<r_i$.

Evidently, $\hhat{L}_i=|\hhat{B}_{i}'\hhat{N}_{i}^{+,\,top}|=|\hhat{A}_{i}'\hhat{M}_{i}^{+,\,top}|$. Moreover, $|\hhat{B}_{i}\hhat{N}_{i}^{+,\,top}|=\hhat{L}_i+\hhat{H}_i$ and $|\hhat{A}_{i}\hhat{M}_{i}^{+,\,top}|=\hhat{L}_i-\hhat{H}_i$.

\begin{figure}[h]
\centering
\includegraphics[width=.5\linewidth]{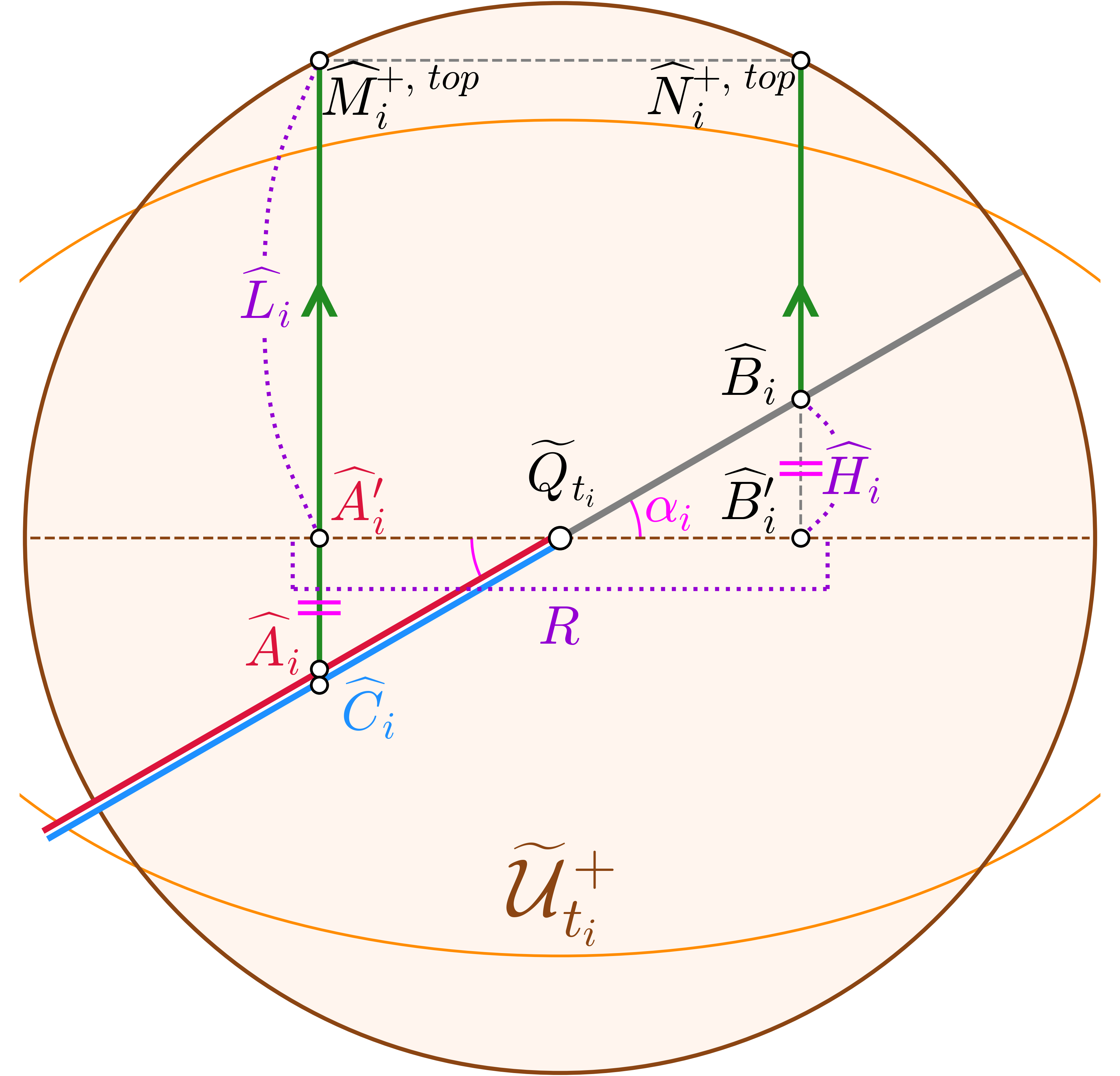}
\caption{Vertical interval sequence construction on $\mcl{U}^+_{t_i}$.}
\label{fig__X_i-circle}
\end{figure}

Next, we describe the preimage of this picture on $\mcl{U}^+_{t_0}\subset\tX$. See Figure \ref{fig__X-circle} for visual reference.

Let $A_{i} = g_{t_{i}}^{-1}\hhat{A}_{i}$, $B_{i} = g_{t_{i}}^{-1}\hhat{B}_{i}$, $A_{i}' = g_{t_{i}}^{-1}\hhat{A}_{i}'$, $B_{i}' = g_{t_{i}}^{-1}\hhat{B}_{i}'$, $M_{i}^{+,\,top} = g_{t_{i}}^{-1}\hhat{M}_{i}^{+,\,top}$, and $N_{i}^{+,\,top} = g_{t_{i}}^{-1}\hhat{N}_{i}^{+,\,top}$ be the respective preimages of the points $\hhat{A}_{i}$, $\hhat{B}_{i}$, $\hhat{A}_{i}'$, $\hhat{B}_{i}'$, $\hhat{M}_{i}^{+,\,top}$, and $\hhat{N}_{i}^{+,\,top}$. 
Note that $A_i'$ and $B_i'$ remain equidistant from $\widetilde{Q}$ and are still totally generic points on $\tX$ (as the \Teichmuller~flow preserves measure-generality of non-critical points, see Lemma \ref{lemma__Teichmuller-preserves-genericity}). 
The same is true for $A_i'$ and $B_i'$, as well as $M_i^{+,\,top}$ and $N_i^{+,\,top}$. By construction, the distance $2R_i=|A_iB_i|=2e^{-t_i}\hhat{r}_i<e^{-t_{i}}r_i$. We can find a subsequence $\{t_{i_{j}}\}_{j\in\bbN}$ of the sequence of times $\{t_{i}\}_{i\in\bbN}$ such that the distances $r_{i_{j}}$ are strictly decreasing. For simplicity, we will abuse our notation and denote this subsequence of times by $\{t_{i}\}_{i\in\bbN}$ too. 

\begin{figure}[h]
\centering
\includegraphics[width=.5\linewidth]{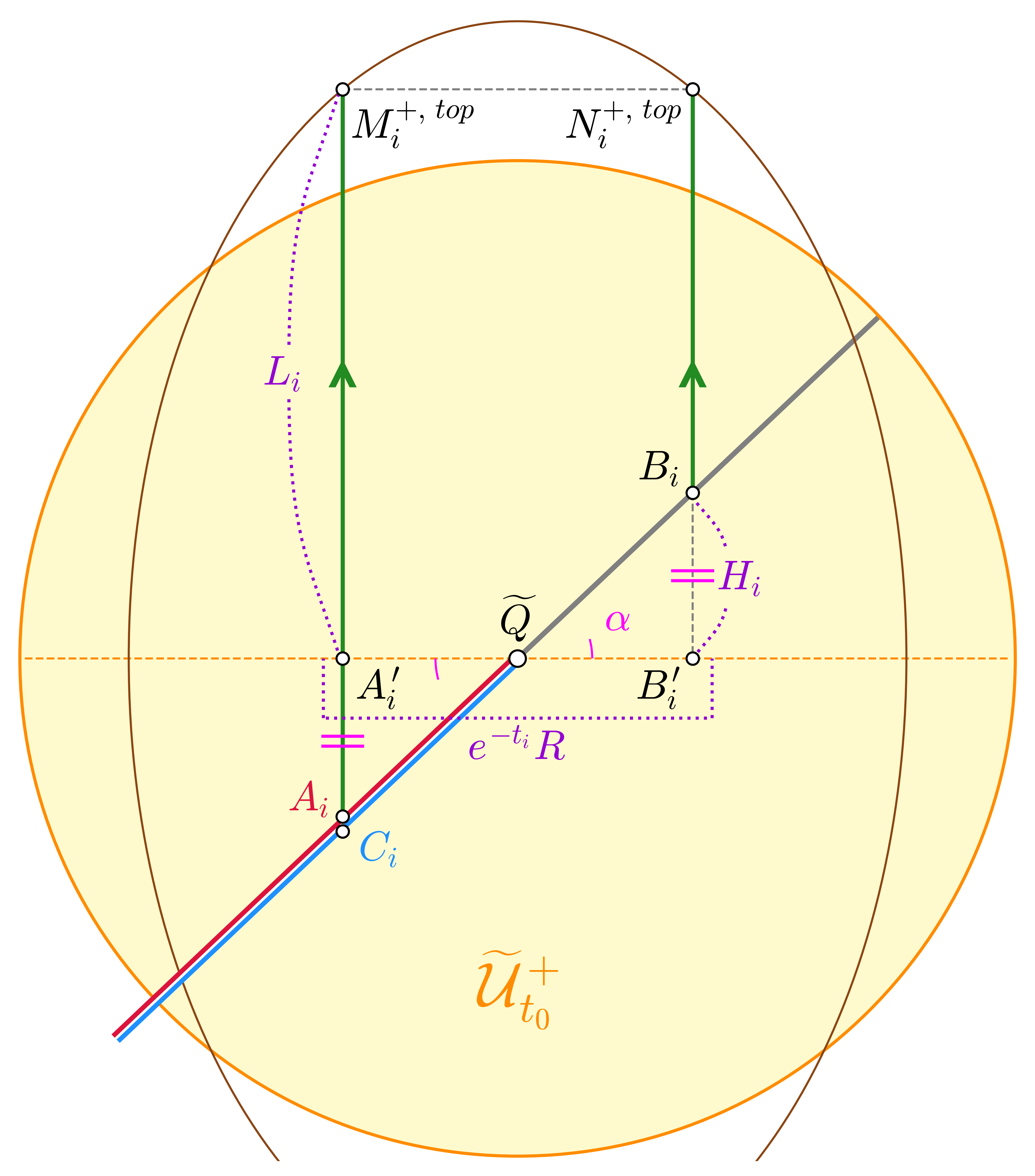}
\caption{Construction of two sequences of vertical intervals, $[A_i,\,M_{i}^{+,\,top}]$ and $[B_i,\,N_{i}^{+,\,top}]$, on $\mcl{U}_{t_0}$.}
\label{fig__X-circle}
\end{figure}

\begin{figure}[h]
\centering
\includegraphics[width=.5\linewidth]{images/U_+_0-main.png}
\caption{Construction of two sequences of vertical intervals, $[A_i,\,M_{i}^{+,\,top}]$ and $[B_i,\,N_{i}^{+,\,top}]$, on $\mcl{U}_{t_0}$.}
\label{fig__X-circle}
\end{figure}

For each $t_{i}$ in the subsequence, consider the parallel intervals $[A_i',\,M_{i}^{+,\,top}]$ and $[B_i',\,N_{i}^{+,\,top}]$ on the surface $\tX$. Clearly, they are both of length $L_i=e^{t_i}\hhat{L}_i$, and 
\begin{equation}
T_{\epsilon_i} \leq \frac{\sqrt{3}}{2}e^{t_i}r_i\leq\, L_i \,<e^{t_i}r_i, \quad
\text{ therefore}\quad \lim_{i\to\infty}L_i\geq \lim_{i\to\infty} {\frac{\sqrt{3}}{2}e^{t_i}r_i}=+\infty.
\end{equation} 
Recall that the value of $T_{\epsilon_i}$ comes from Theorem \ref{thm__almost-generic-points}. 

Moreover, the signed distance $H_i=e^{t}\hhat{H}_i$ from $B_i$ to $B_i'$ clearly tends to $0$ as $i$ goes to infinity (thus so does the distance $|\widetilde{Q}B_i'|$).

\emph{IV. Geometry inside the entire $\mcl{U}_{t_i}$.}

Since $\mcl{U}^+_{t_i}=\varsigma(\mcl{U}^-_{t_i})$, so the exact same geometric construction can be repeated in $\mcl{U}^-_{t_i}$. Note that $\hhat{A}_i,\,\hhat{C}_i\in \mcl{U}^+_{t_i}\cap \mcl{U}^-_{t_i}$. The point $\hhat{D}_i=\varsigma(\hhat{B}_i)$ has to be totally generic since $\hhat{B}_i$ is totally generic and not critical. Moreover, $\hhat{D}_i$ has to be $\epsilon_i$-almost totally generic from symmetry (see Lemma \ref{lemma__measure-eps-generic-symmetry}). The exact same extension construction as above can be done in the backward direction of the vertical flow on $\tX$. The only slight issue is keeping track of the relative positions of rays and points, as demonstrated by Figure \ref{fig__full-U_0}.

    \begin{figure}[h]
\centering
\begin{subfigure}{.49\textwidth}
  \centering
  \includegraphics[width=.99\linewidth]{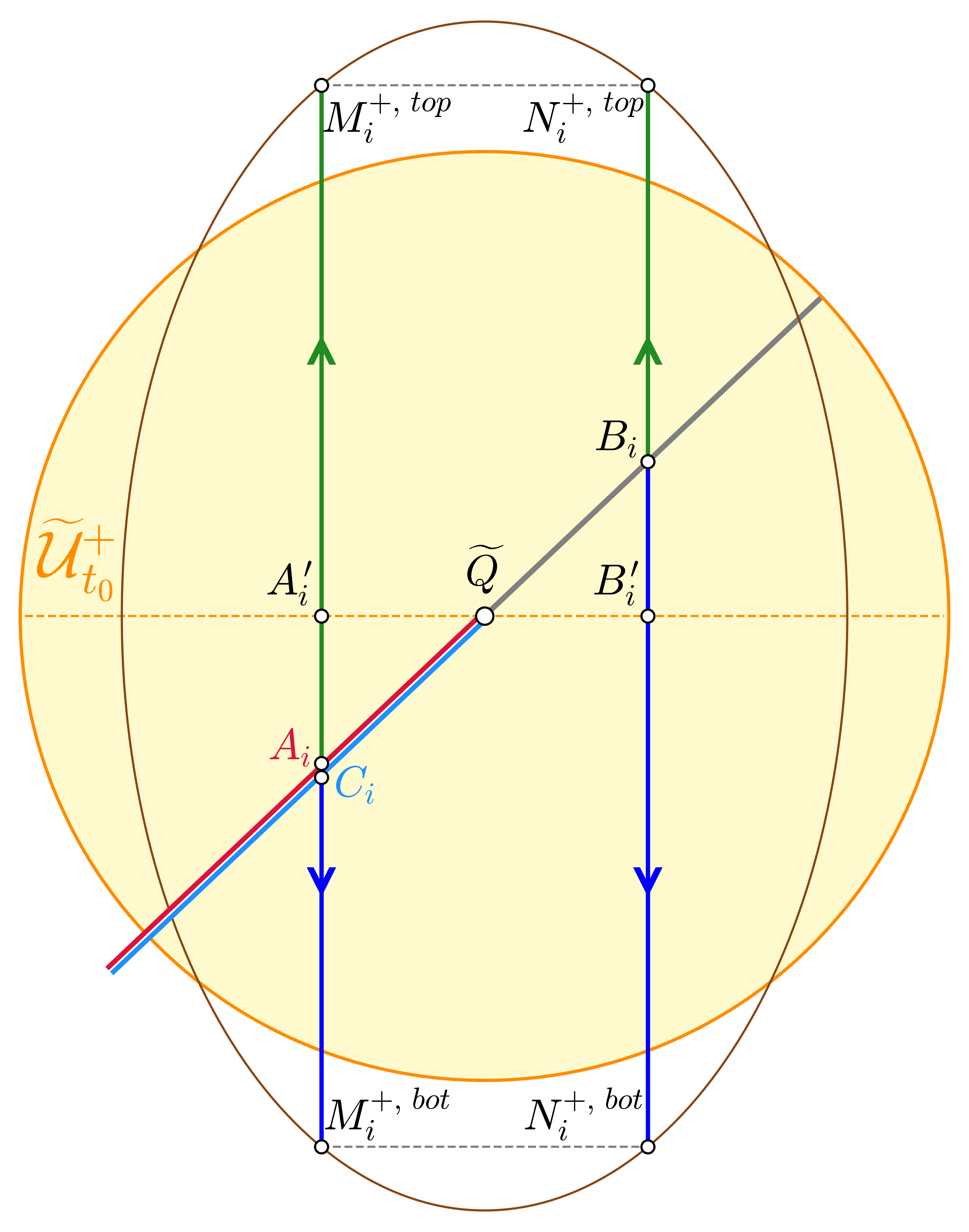}
  \label{}
\end{subfigure}%
\begin{subfigure}{.49\textwidth}
  \centering
  \includegraphics[width=.99\linewidth]{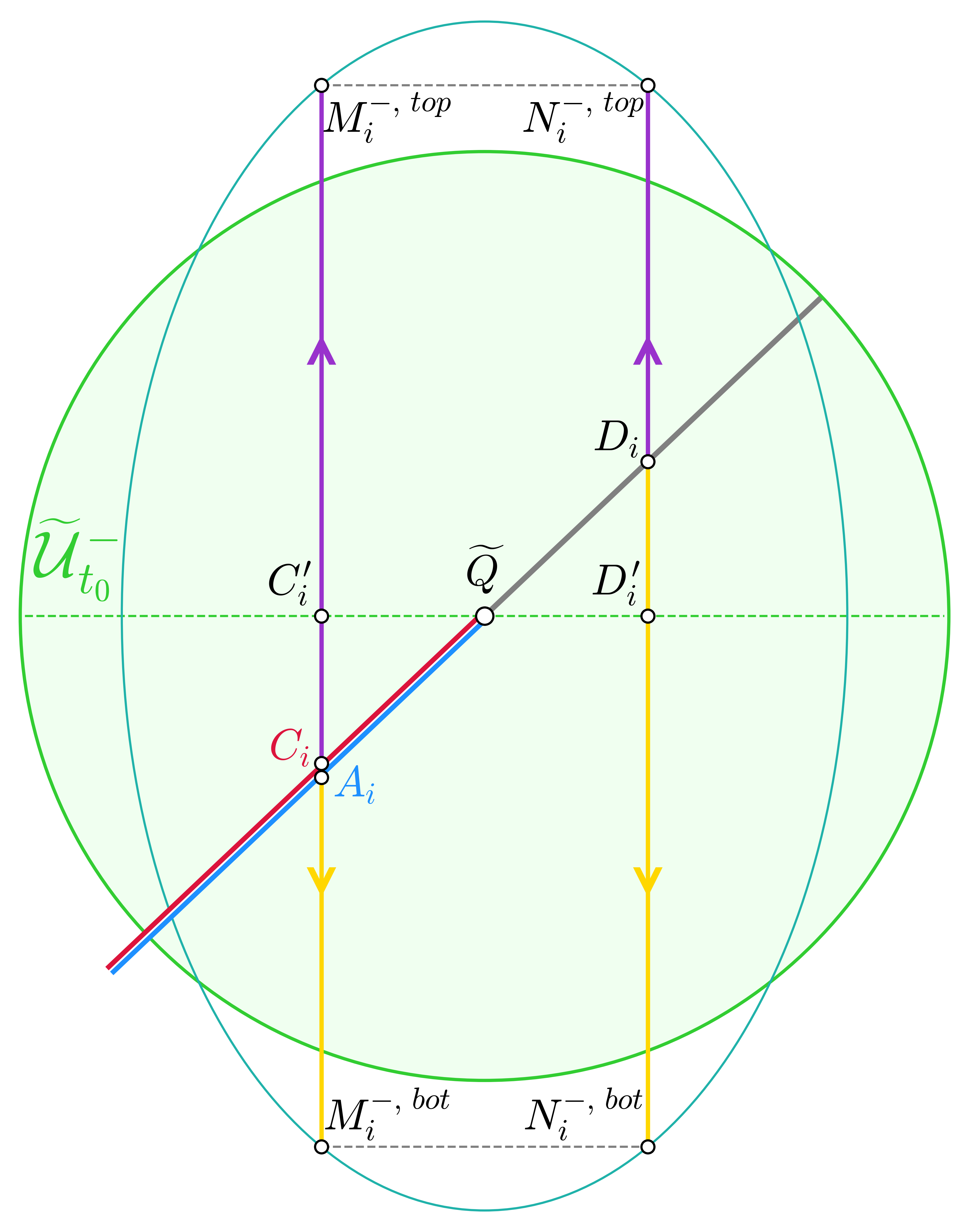}
  \label{}
\end{subfigure}
\caption{The neighborhood $\mcl{U}_{t_0}$.}
\label{fig__full-U_0}
\end{figure}

\emph{V. Forward flow limits.}

Let $f$ be a function on $\tX$, and let $t\geq 0$. For simplicity, slightly change the notation of the integrals from Equation \eqref{eq__int-def}:
\begin{equation}\label{eq__special-int-def}
    \Iup{f}{A_{i}} := \Iup[L_i-H_i]{f}{A_{i}} = \frac{1}{L_{i}-H_i}\int_{0}^{L_{i}-H_i} f(v_{s}A_{i})\,\rd s,
\end{equation}
\begin{equation}\label{eq__special-int-def_3}
\Iup{f}{A_{i}'} := \Iup[L_i]{f}{A_{i}'} = \frac{1}{L_{i}}\int_{0}^{L_{i}} f(v_{s}A_{i}')\,\rd s,
\end{equation}
\begin{equation}\label{eq__special-int-def_4}
\Iup{f}{B_{i}} := \Iup[L_i+H_i]{f}{B_{i}} = \frac{1}{L_{i}+H_i}\int_{0}^{L_{i}+H_i} f(v_{s}B_{i})\,\rd s,
\end{equation}
\begin{equation}\label{eq__special-int-def_5}
\Iup{f}{B_{i}'} := \Iup[L_i]{f}{B_{i}'} = \frac{1}{L_{i}}\int_{0}^{L_{i}} f(v_{-s}B_{i}')\,\rd s.
\end{equation}

\begin{lemma}\label{lemma__A-and-A-prime-limits}
    If $\lim_{i\to\infty}\Iup{f}{B_{i}'}$ exists and is finite, then so does $\lim_{i\to\infty}\Iup{f}{B_{i}}$, and vice versa. Moreover, these two limits are equal. The same holds for $\lim_{i\to\infty}\Iup{f}{A_{i}'}$ and $\lim_{i\to\infty}\Iup{f}{A_{i}}$. 
\end{lemma}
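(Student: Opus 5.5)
The plan is to compare the two averages directly: both are taken along the same vertical trajectory, and the two segments differ only by a short initial piece of length $|H_i|$. I will assume $f$ is bounded, say $\|f\|_\infty<\infty$; this covers every $f\in\mathcal C(\tX)$ and every $f\in\Lip_1(\tX)$ by Proposition~\ref{F_zero_uniformly_bounded}, after subtracting a constant. I read \eqref{eq__special-int-def_5} as the forward average along $[B_i',\,N_i^{+,\,top}]$, as Part~III intends; the backward sign there looks like a typo.

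\textbf{Step 1: the two segments share an endpoint.} By construction $B_i$ and $B_i'$ lie on one vertical leaf inside the embedded disk $\mcl{U}^+_{t_0}$, and both segments end at $N_i^{+,\,top}$. If $H_i\ge 0$, then $B_i'=\tv_{H_i}B_i$, and the segment $[B_i,N_i^{+,\,top}]$ of length $L_i+H_i$ is the concatenation of $[B_i,B_i']$ (length $H_i$) with $[B_i',N_i^{+,\,top}]$ (length $L_i$). If $H_i<0$, the roles are reversed. Hence
\[
(L_i+H_i)\,\Iup{f}{B_i}=L_i\,\Iup{f}{B_i'}+\sigma_i\int_0^{|H_i|} f(\tv_s Z_i)\,\rd s ,
\]
where $Z_i$ is the lower of the two points and $\sigma_i=\operatorname{sgn}H_i$. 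The segment $[B_i,B_i']$ stays in the disk, so it does not cross the slit in a way that breaks this concatenation.

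\textbf{Step 2: elementary estimate.} Dividing by $L_i+H_i$ gives
\[
\bigl|\Iup{f}{B_i}-\Iup{f}{B_i'}\bigr|
\le \Bigl|\tfrac{L_i}{L_i+H_i}-1\Bigr|\,\|f\|_\infty+\tfrac{|H_i|}{L_i+H_i}\,\|f\|_\infty
\le \tfrac{2|H_i|}{L_i+H_i}\,\|f\|_\infty .
\]
Part~III gives $H_i\to0$ and $L_i\ge \frac{\sqrt3}{2}e^{t_i}r_i\to\infty$, so the right-hand side tends to $0$. Therefore one sequence converges if and only if the other does, and the two limits agree.

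\textbf{Step 3: the pair $A_i$, $A_i'$.} This is identical, with signed offset $-H_i$ and lengths $L_i-H_i$ and $L_i$ along the common segment ending at $M_i^{+,\,top}$.

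\textbf{Main obstacle.} The delicate point is Step~1, namely that $B_i'$ really lies on the forward (or backward) vertical trajectory of $B_i$ within the disk, so that the integrals split additively. This follows from the flat-disk geometry of $\mcl{U}^+_{t_0}$ and the definition of $H_i$. Beyond that, one only needs $H_i\to0$, which holds because $\hhat H_i$ is bounded by $R$ and is scaled by $e^{-t_i}$ when pulled back; the paper's notation $e^{t}\hhat H_i$ should be read as this contraction.
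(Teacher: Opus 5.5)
Your proof is correct and follows the paper's own argument. Both split the longer of the two vertical segments at the nearer of $B_i,B_i'$ and bound the two resulting error terms. Your explicit bound $\frac{2|H_i|}{L_i+H_i}\|f\|_\infty$ states the boundedness of $f$ that the paper's proof uses only implicitly. Your forward reading of \eqref{eq__special-int-def_5} matches what the paper's proof actually computes with.

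One correction to your final paragraph: the paper's $H_i=e^{t}\hhat H_i$ (with $t=t_i$) is not a typo. The map $g_{t_i}^{-1}$ \emph{expands} vertical lengths by $e^{t_i}$. That is exactly why $L_i=e^{t_i}\hhat L_i\to\infty$, which you use in Step~2, so the bound $|\hhat H_i|\le R$ gives nothing.

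The decay of $H_i$ comes from the slit direction. On $\tX$, the point $B_i$ lies on the fixed non-vertical line through $\widetilde Q$ in the slit direction. Its horizontal offset from $\widetilde Q$ is less than $e^{-t_i}R/2$, because horizontal lengths \emph{are} contracted by $g_{t_i}^{-1}$. Hence $|H_i|<e^{-t_i}(R/2)|\tan\theta|\to0$, where $\theta$ is the angle of the slit with the horizontal; equivalently, $|\hhat H_i|\le \hhat r_i e^{-2t_i}|\tan\theta|$.

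Your Step~2 only imports $H_i\to0$ from Part~III, so the lemma's proof itself is unaffected.
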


\begin{proof}It is enough to prove the statement for $B_i'$ and $B_i$. There are two cases to consider. 

First, assume that $B_i'$ is an image of $B_i$ under the forward vertical flow: $B_i'=g_{|H_i|}B_i$. Then $H_i$ is positive, and 
\begin{multline}
\left\|\Iup{f}{B_{i}} - \Iup{f}{B_{i}'}\right\|
=
\left\|
\frac{1}{L_{i}+H_i}\int_{0}^{L_{i}+H_i} f(v_{s}B_{i})\,\rd s
-
\frac{1}{L_{i}}\int_{0}^{L_{i}} f(v_{s}B_{i}')\,\rd s
\right\|
=\\=
\left\|
\frac{1}{L_{i}+H_i}\int_{0}^{H_{i}} f(v_{s}B_{i})\,\rd s+
\left(\frac{1}{L_{i}+H_i}-\frac{1}{L_i}\right)\int_{0}^{L_{i}} f(v_{s}B_{i}')\,\rd s
\right\|
= \\ =
\frac{H_i}{L_i+H_i}\left\|
\Iup[H_i]{f}{B_i}-\Iup{f}{B_i'}
\right\|.
\end{multline}
As $i$ goes to infinity, $L_i$ goes to infinity and $H_i$ goes to $0$. Therefore $\Iup[H_i]{f}{B_i}$ goes to $0$. Therefore, if $\lim_{i\to\infty}\|\Iup{f}{B_i'}\|$ exists and is finite, then $\lim_{i\to\infty}\|\Iup{f}{B_i}\|$ exists and is finite, and these two limits are equal. The other direction (from existence of limits for $B_i$ to existence of limits for $B_i'$) follows analogously from considering:
\begin{multline}
\left\|\Iup{f}{B_{i}} - \Iup{f}{B_{i}'}\right\|
=
\left\|
\frac{1}{L_{i}+H_i}\int_{0}^{L_{i}+H_i} f(v_{s}B_{i})\,\rd s
-
\frac{1}{L_{i}}\int_{0}^{L_{i}} f(v_{s}B_{i}')\,\rd s
\right\|
=\\=
\left\|
\left(\frac{1}{L_{i}+H_i}-\frac{1}{L_i}\right)\int_{0}^{L_{i}+H_i} f(v_{s}B_{i})\,
+
\frac{1}{L_{i}}
\int_{0}^{H_i} f(v_{s}B_{i})\,\rd s
\right\|
=
\frac{H_i}{L_i}\left\|
\Iup[H_i]{f}{B_i}-\Iup{f}{B_i}.\right\|
\end{multline}

Calculations for the case when $B_i'$ is a preimage of $B_i$ under the forward vertical flow ($B_i=g_{|H_i|}B_i'$) are done analogously.

\if
Then $H_i$ is negative, and 
\begin{multline}
\left\|\Iup{f}{B_{i}} - \Iup{f}{B_{i}'}\right\|
=
\left\|
\frac{1}{L_{i}-|H_i|}\int_{0}^{L_{i}-|H_i|} f(v_{s}B_{i})\,\rd s
-
\frac{1}{L_{i}}\int_{0}^{L_{i}} f(v_{s}B_{i}')\,\rd s
\right\|
=\\=
\left\|
\left(\frac{1}{L_{i}-|H_i|}-\frac{1}{L_i}\right)\int_{0}^{L_{i}-|H_i|} f(v_{s}B_{i})\,\rd s
-\frac{1}{L_{i}}\int_{0}^{|H_{i}|} f(v_{s}B_{i}')\,\rd s
\right\|
=
\\
=
\frac{|H_i|}{L_i-|H_i|}\left\|
\Iup{f}{B_i}-\Iup[|H_i|]{f}{B_i'}
\right\|.
\end{multline}
As $i$ tends to infinity, $L_i$ tends to infinity and $H_i$ tends to $0$. Therefore $\Iup[H_i]{f}{B_i}$ tends to $0$. Therefore, if $\lim_{i\to\infty}\|\Iup{f}{B_i}\|$ exists and is finite, then $\lim_{i\to\infty}\|\Iup{f}{B_i'}\|$ exists and is finite, and these two limits are equal. The other direction (from the existence of limits for $B_i'$ to the existence of limits for $B_i$) follows analogously. 
\fi
\end{proof}

Consider an arbitrary $1$-Lipschitz function $h$ on $\tX$. Since $\tX$ is a compact metric space, and since all measures $\mu_i$ are absolutely continuous with respect to the Lebesgue measure $\mcl{L}_{\tX}$ on $\tX$ (see Lemma \ref{lemma__n-measures}), the function $h$ is going to be both $\mu_i$-integrable on $\tX$ for any $0\leq i\leq d-1$.

\begin{lemma}\label{lemma__A-and-B-limits}
The sequence $\{\Iup{h}{A_i}\}_{n\in\bbN_+}$ converges to a finite limit, and so does the sequence $\{\Iup{h}{B_i}\}_{n\in\bbN_+}$. Moreover, these two limits are equal to each other and to $\Av_{m}(h)$ for some $m=\mu_j$, $0\leq j\leq d-1$.
\end{lemma}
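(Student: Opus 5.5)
The plan is to use the $\epsilon_i$-almost total genericity of the points $A_i$ and $B_i$ together with the lower bound $L_i\geq \frac{\sqrt3}{2}e^{t_i}r_i>\max\{T_{\epsilon_i},T_{\epsilon_{i+1}}\}$ from \eqref{eq__condition-t_k_i}. By construction (Part III, using Lemma \ref{lemma__Teichmuller-preserves-genericity} to transport from $\tX_{t_i}$ back to $\tX$), each $A_i$ lies in $G_{\epsilon_i,T_{\epsilon_i}}$. So there is an ergodic measure $m_i^A\in\{\mu_0,\dots,\mu_{d-1}\}$ with
$$\bigl|\Iup[T]{h}{A_i}-\Av_{m_i^A}(h)\bigr|<\epsilon_i \quad\text{for all } T>T_{\epsilon_i},\ h\in\Lip_1(\tX).$$
The same holds for $B_i$ with some measure $m_i^B$. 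Since $L_i\mp H_i\to\infty$ and $H_i\to 0$, for large $i$ the integration times $L_i-H_i$ and $L_i+H_i$ exceed $T_{\epsilon_i}$, and we get $|\Iup{h}{A_i}-\Av_{m_i^A}(h)|<\epsilon_i$ and likewise for $B_i$. If necessary, I would adjust the choice in \eqref{eq__condition-t_k_i} by a margin absorbing $|H_i|$.

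Next I show that the measures $m_i^A, m_i^B$ coincide and stabilize. There are only finitely many ergodic measures ($d\le \mathrm N$), so by pigeonhole I pass to a subsequence along which $m_i^A=m^A$ and $m_i^B=m^B$ are constant. The conclusion is only about limits, and the paper already freely passes to subsequences. The key step is to prove $m^A=m^B$. Here I would use the geometry: by Lemma \ref{lemma__A-and-A-prime-limits} the averages of $A_i$ and $A_i'$ (resp.\ $B_i$ and $B_i'$) have the same asymptotics. Moreover, $A_i'$ and $B_i'$ lie on the same horizontal diameter at distance $\le 2R_i\to 0$, and their vertical segments of length $L_i$ stay inside the embedded lifted disk on the same sheet. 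The flow is a translation there, so $d(v_sA_i',v_sB_i')=|A_i'B_i'|$ for $s\in[0,L_i]$. Since $h$ is $1$-Lipschitz, $|\Iup{h}{A_i'}-\Iup{h}{B_i'}|\le |A_i'B_i'|\to0$. Combining, $|\Av_{m^A}(h)-\Av_{m^B}(h)|\le 2\epsilon_i+o(1)\to 0$. Since $h$ is arbitrary, Lemma \ref{lemma__core-1-Lipschitz-distinct-averages} (or Corollary \ref{cor__no-double-genericity}) forces $m^A=m^B=:m$.

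Finally, convergence: $|\Iup{h}{A_i}-\Av_m(h)|<\epsilon_i\to0$, so $\Iup{h}{A_i}\to\Av_m(h)$, and similarly for $B_i$. The main obstacle is that $m_i^A$ could a priori oscillate between measures, so the full sequence need not converge. I expect to handle this by the subsequence reduction above. Alternatively, I would show that the measure is eventually constant, because consecutive $A_i$ approach $\widetilde Q$ on the same side of the slit. I would also need to check carefully that the segments from $A_i'$ and $B_i'$ genuinely stay in one sheet, i.e.\ do not cross the slit inside $\mcl U^+_{t_0}$. This is where the choice of $B_i$ on the prolonged diameter and of $A_i$ on the slit matters. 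If they do cross, I would instead compare $A_i$ with $\varsigma^{\pm1}$ of the relevant point and invoke Lemma \ref{lemma__measure-eps-generic-symmetry}.
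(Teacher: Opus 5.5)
Your identification of the two measures (comparing $\Iup[L_i]{h}{A_i'}$ with $\Iup[L_i]{h}{B_i'}$ along parallel vertical segments at distance $|A_i'B_i'|\to0$, then invoking Lemma \ref{lemma__core-1-Lipschitz-distinct-averages}) is exactly Step 2 of the paper's proof. The gap is in the other half.

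Pigeonholing on $(m_i^A,m_i^B)$ gives convergence only along a further subsequence, and you concede that under your argument the full sequence need not converge. The lemma, however, asserts convergence of the whole sequence $\{\Iup{h}{A_i}\}$ for the times $t_i$ already fixed in Parts I--III. Your fallback (``consecutive $A_i$ approach $\widetilde Q$ on the same side of the slit'') names the geometric input but not the analytic one that makes it work.

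The paper's Step 1 compares consecutive points over the common window $[0,L_i]$. Condition \eqref{eq__condition-t_k_i} gives $L_i>T_{\epsilon_{i+1}}$ as well as $L_i>T_{\epsilon_i}$; this is precisely why $T_{\epsilon_{i+1}}$ appears there, and you quote the bound but never use it. So the $\epsilon_{i+1}$-almost genericity of $A_{i+1}$ already controls the shorter average $\Iup[L_i]{h}{A_{i+1}'}$. That average is within $\epsilon_{i+1}$ of $\Av_{m_{i+1}^A}(h)$, up to the $A_{i+1}$-versus-$A_{i+1}'$ correction of Lemma \ref{lemma__A-and-A-prime-limits}. Since $A_{i+1}'$ lies between $\widetilde Q$ and $A_i'$ on the same ray (arranged in Part III), the two vertical segments of length $L_i$ stay within $e^{-t_i}R/2$ of each other. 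Hence $|\Iup[L_i]{h}{A_i'}-\Iup[L_i]{h}{A_{i+1}'}|\le e^{-t_i}R/2$.

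With $h=\mathfrak{h}$ this gives $|\Av_{m_i^A}(\mathfrak{h})-\Av_{m_{i+1}^A}(\mathfrak{h})|\to0$. Since the finitely many numbers $\Av_{\mu_j}(\mathfrak{h})$ are pairwise distinct, $m_{i+1}^A=m_i^A$ for all large $i$, and likewise for $B_i$. Once the measures are eventually constant, your $\epsilon_i$-estimate yields convergence of the full sequences, and your Step 2 identifies the common limit.

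Your subsequential version still has value. The measures attached to the points do not depend on $h$, because almost genericity is uniform over $\Lip_1(\tX)$, so your subsequence is $h$-independent. If you pigeonhole jointly on the measures attached to $A_i$, $B_i$ and $C_i=\varsigma^{-1}(A_i)$, it suffices for the contradiction in Theorem \ref{thm__The-CIRCLE_CRITERION}, which in fact only needs the three measures to agree at a single large $i$. But it does not prove the lemma as stated.

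Separately, your concern about sheets is unnecessary. The vertical line through $A_i'$ does meet the slit (at $A_i$). But after applying $g_{t_i}$, both segments lie in a single sector of angle $\pi$ of the lifted cone $\mcl{U}_{t_i}$, which is isometric to a Euclidean half-disk. So $d(\tv_sA_i',\tv_sB_i')=|A_i'B_i'|$ regardless of sheet labels, and no $\varsigma^{\pm1}$ fallback is needed.
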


\begin{proof}\, We first prove that the sequence $\{\Iup{h}{A_i}\}_{n\in\bbN_+}=\{\Iup[L_i-H_i]{h}{A_i}\}_{n\in\bbN_+}$ converges (the proof for the sequence $\{\Iup{h}{B_i}\}_{n\in\bbN_+}=\{\Iup[L_i+H_i]{h}{B_i}\}_{n\in\bbN_+}$ is analogous). Then we will prove that the limits are equal. 

\emph{Step 1:}  
 From the previous lemma, we can instead prove convergence of the sequence $\{\Iup[L_i]{h}{A_i'}\}_{n\in\bbN_+}$. Recall that each point $A_{i}$, $i\geq 1$, is $\epsilon_{i}$-almost totally generic w.r.t. an ergodic invariant measure $m$ on $\tX$. While the same is not necessarily true for points $A_{i}'$, $i\geq 1$, we can denote by $d_{A_i}$ the difference $\left\|\Iup{f}{A_{i}} - \Iup{f}{A_{i}'}\right\|$, implying that a point $A_i'$ is at most  $d_{A_i}$-away from being $\epsilon_{i}$-almost totally generic. Keep in mind that $d_{A_i}$ tends to $0$ as $i$ tends to $\infty$.

 We want to show that there exists $N \in \mathbb{N}_+$ such that the measure $m$ is the same for all $A_{i}'$ with $i > N$. 

Suppose not. Then, for any $N \in \mathbb{N}_+$, there exist two consecutive points $A_{i}$ and $A_{i+1}$, $i\geq N$, such that $A_{i}$ is $\epsilon_{i}$-almost totally $\mu$-generic for some $\mu=\mu_{j_1}$ and $A_{i+1}$ is $\epsilon_{i+1}$-almost totally $\nu$-generic for some $\mu=\mu_{j_2}$, $j_1\neq j_2$. By the construction from the main proof, we have $L_{i}\geq T_{\epsilon_{i}}$, $L_{i}\geq T_{\epsilon_{i+1}}$, and $L_{i+1}\geq T_{\epsilon_{i+1}}$. Therefore
\begin{equation}\label{eq__Ai_Ai+1_ineqs}
\left\|\Iup[L_i]{h}{A_i'} - \Av_{\mu}(h)\right\| < \epsilon_{i}+d_{A_i}
\quad  
\text{ and } 
\quad 
\left\|\Iup[L_{i+1}]{h}{A_{i+1}'} - \Av_{\nu}(h)\right\|  < \epsilon_{i+1}+d_{A_{i+1}}.
\end{equation}
Moreover, given Condition \eqref{eq__condition-t_k_i} on the sequence of times $t_i$, we have
\begin{equation}\label{eq__one_more_Ai+1_ineq}
\left\| \Iup[L_i]{h}{A_{i+1}'} - \Av_{\nu}(h)\right\| < \epsilon_{i+1}+d_{A_{i+1}}.
\end{equation}

Since $h$ is a $1$-Lipschitz function, $A_{i+1}'$ is positioned between $O$ and $A_i'$ by construction, and $|OA_i'|\leq \frac{1}{2}R$, we have 
\begin{multline}\label{eq__A_i_Ai+1-R/2-ineq}
\left\|\Iup[L_i]{h}{A_i'} - \Iup[L_i]{h}{A_{i+1}'} \right\| =  
\frac{1}{L_{i}}\left\|\int_{0}^{L_{i}} h(v_{s}A_{i}')\,\rd s -\int_{0}^{L_{i}} h(v_{s}A_{i+1}')\,\rd s \right\| 
\leq 
\\
\leq
\frac{1}{L_{i}} \int_{0}^{L_{i}} \left\|h(v_{s}A_{i}')-h(v_{s}A_{i+1}') \right\| \,\rd s 
\leq 
\frac{1}{L_{i}} \int_{0}^{L_{i}} e^{-t_{i}}\frac{R}{2}  \ \,\rd s = e^{-t_{i}}\frac{R}{2}.
\end{multline} 

Combining \eqref{eq__Ai_Ai+1_ineqs}, \eqref{eq__one_more_Ai+1_ineq} and \eqref{eq__A_i_Ai+1-R/2-ineq}, we get
\begin{multline*}
\left\|\Av_{\nu}(h) - \Av_{\mu}(h)\right\| 
\leq
\left\|\Iup[L_i]{h}{A_{i}'} - \Av_{\mu}(h)\right\|  + d_{A_{i}}
+ 
\left\|\Iup[L_i]{h}{A_{i+1}'} - \Iup[L_i]{h}{A_{i}'}\right\| + d_{A_{i}} + d_{A_{i+1}}
+
\\
+ 
\left\|\Av_{\nu}(h) - \Iup[L_i]{h}{A_{i+1}'}\right\| + d_{A_{i+1}}
< \epsilon_{i} + e^{-t_{i}}\frac{R}{2} + \epsilon_{i+1} + 2d_{A_{i}} + 2d_{A_{i+1}}.
\end{multline*} 

Since the sum on the right clearly converges to $0$ as $i$ tends to $\infty$, so, even though the measures are different, their averages $\Av_{\mu}(h)$ and $\Av_{\nu}(h)$ are the same. Since we did not specify the function $h$ beyond the fact that it is 1-Lipschitz, we can take the function $\mathfrak{h}$ from Lemma \ref{lemma__core-1-Lipschitz-distinct-averages}. We have obtained a contradiction.

A completely analogous proof shows that the sequence $\{\Iup{h}{B_i}\}_{n\in\bbN_+}$ also converges to $\Av_{\bar{m}}(h)$ for some $\bar{m}=\mu_k$, $0\leq k\leq d-1$.

\emph{Step 2:}  If $\bar{m} = m$, the proof is complete. It remains to prove that we cannot have $\bar{m} \neq m$.

Recall that $h$ is $1$-Lipschitz, $|A_{i}'B_{i}'| = |v_{s}A_{i}'v_{s}B_{i}'|$ for any $s \in [0,\,L_{i}]$, and $|A_{i}'B_{i}'|\leq e^{-t_{i}}R$ for all $i>0$. Therefore, for any $i>0$,
\begin{multline*}
\left\|\Iup{h}{B_i'} - \Iup{h}{A_i'}\right\| 
= 
\left\|\frac{1}{L_{i}}\int_{0}^{L_{i}} h(v_{s}B_{i}')\,\rd s - \frac{1}{L_{i}}\int_{0}^{L_{i}} h(v_{s}A_{i}')\,\rd s \right\| 
\leq
\\
\leq
\frac{1}{L_{i}} \int_{0}^{L_{i}}
\left\|h(v_{s}B_{i}')-h(v_{s}A_{i}')\right\| \,\rd s 
\leq 
\frac{1}{L_{i}} \int_{0}^{L_{i}} e^{-t_{i}}R  \,\rd s 
= 
e^{-t_{i}}R.
\end{multline*} 
Therefore $\{\Iup{h}{A_i}'\}_{n\in\bbN_+}$ and $\{\Iup{h}{B_i'}\}_{n\in\bbN_+}$ have to converge to the same limit. Then the same is true for $\{\Iup{h}{A_i}\}_{n\in\bbN_+}$ and $\{\Iup{h}{B_i}\}_{n\in\bbN_+}$ from Lemma \ref{lemma__A-and-A-prime-limits}.
\end{proof}

\emph{Part VI. Backward flow limits.}

Note that the exact same construction as above can be repeated for the backward vertical flow and the sequence of pairs of points $\{(C_i,\,B_i)\}_{i\in\bbN_0}$, where $C_i=\varsigma^{-1}(A_i)$ for all $i\geq 0$. Analogously to Lemma \ref{lemma__A-and-B-limits}, we can obtain

\begin{lemma}\label{lemma__C-and-B-limits}
The sequence $\{\Idown{h}{C_i}\}_{n\in\bbN_+}$ converges to a finite limit, and so does the sequence $\{\Idown{h}{B_i}\}_{n\in\bbN_+}$. Moreover, these two limits are equal to each other and to $\Av_{\bar{m}}(h)$ for some $\bar{m}=\mu_k$, $0\leq k\leq d-1$.
\end{lemma}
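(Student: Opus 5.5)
The plan is to run the argument of Lemma \ref{lemma__A-and-B-limits} again with the flow direction reversed. On $\mcl{U}_{t_i}$ we use the points $\hhat{C}_i=\varsigma^{-1}(\hhat{A}_i)$ and $\hhat{B}_i$. For each $i$ we flow them backward until the trajectories first hit the boundary of the disk; call these points $\hhat{M}_i^{-,\,bot}$ and $\hhat{N}_i^{-,\,bot}$. We also take their vertical projections $\hhat{C}_i'$ and $\hhat{B}_i'$ onto the horizontal diameter through $\widetilde{Q}_{t_i}$. By the central symmetry of the disk, the backward segments from $\hhat{C}_i'$ and $\hhat{B}_i'$ to the lower boundary are parallel and of equal length $\hhat{L}_i$, with $\frac{\sqrt3}{2}\hhat{r}_i\le\hhat{L}_i$. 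The signed offsets from $\hhat{C}_i,\hhat{B}_i$ to their projections are $\pm\hhat{H}_i$. Pulling back by $g_{t_i}^{-1}$ gives vertical segments of length $L_i\ge T_{\epsilon_i}$ on $\tX$, with $H_i\to 0$. Condition \eqref{eq__condition-t_k_i} also gives $L_i\ge T_{\epsilon_{i+1}}$. Throughout we define $\Idown{h}{C_i}$ and $\Idown{h}{B_i}$ over the lengths $L_i\mp H_i$, just as in \eqref{eq__special-int-def}.

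\emph{Step 1.} We need the backward analogue of Lemma \ref{lemma__A-and-A-prime-limits}, comparing $\Idown{h}{C_i}$ with $\Idown[L_i]{h}{C_i'}$. The same computation works with $v_s$ replaced by $v_{-s}$. The error is bounded by $\frac{|H_i|}{L_i\pm H_i}\cdot 2\sup|h|$, which tends to $0$ because $L_i\to\infty$ and $H_i\to0$.

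\emph{Step 2.} We show that the measure for which $C_i$ is $\epsilon_i$-almost backward generic stabilizes. This is where we use that $C_i$ is totally $\epsilon_i$-almost generic: $A_i$ is, so $C_i$ is too by Lemma \ref{lemma__measure-eps-generic-symmetry}. Suppose $C_i$ is almost $\mu$-generic and $C_{i+1}$ is almost $\nu$-generic, with $\mu\ne\nu$. We compare $\Idown[L_i]{h}{C_i'}$ and $\Idown[L_i]{h}{C_{i+1}'}$. Both points lie on the same side of $\widetilde Q$ within distance $e^{-t_i}R/2$, and their backward trajectories over time $L_i$ stay parallel inside the lifted disk. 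The $1$-Lipschitz bound therefore gives a difference of at most $e^{-t_i}R/2$. Combining this with the almost-genericity estimates at times $L_i\ge \max(T_{\epsilon_i},T_{\epsilon_{i+1}})$ gives
\[
\bigl|\Av_\mu(h)-\Av_\nu(h)\bigr|<\epsilon_i+\epsilon_{i+1}+e^{-t_i}R/2+o(1).
\]
Taking $h=\mathfrak h$ from Lemma \ref{lemma__core-1-Lipschitz-distinct-averages} gives a contradiction. Hence $\Idown{h}{C_i}\to\Av_{\bar m}(h)$ for a single $\bar m=\mu_k$, and likewise $\Idown{h}{B_i}\to\Av_{m'}(h)$ for some $m'$.

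\emph{Step 3.} The backward trajectories from $C_i'$ and $B_i'$ over time $L_i$ are parallel at distance at most $e^{-t_i}R$. So their averages differ by at most $e^{-t_i}R\to0$, and with Step 1 this forces $m'=\bar m$.

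The main obstacle is Step 2's geometric claim. We must be sure that $C_i'$ and $C_{i+1}'$ lie in the same sheet sector, so that their backward segments of length $L_i$ remain uniformly close and never separate across the slit or the branch point. I would handle this by the same sheet bookkeeping as in Part IV. Because the $C_i$ all lie on the same (bottom) side of the slit, at radii decreasing in $i$, the straight segment between $C_i'$ and $C_{i+1}'$ avoids $\widetilde Q$. Flowing it backward inside the embedded disk $g_{t_i}^{-1}\mcl U_{t_i}$ moves it rigidly. If some crossing of the slit occurs, both points cross together and land on the same sheet.
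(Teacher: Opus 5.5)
Your proposal is correct and follows the paper's route: the paper gives no separate argument here, only stating that the proof of Lemma~\ref{lemma__A-and-B-limits} carries over to the backward flow and the pair $(C_i,B_i)$, and your rerun (offset comparison, stabilization of the measure via consecutive indices and $\mathfrak h$, then comparison of $C_i'$ with $B_i'$ in the same lower half-disk) is exactly that argument. One small correction: the bound you need is $\hhat{L}_i\ge\frac{\sqrt3}{2}R$, which follows from $\hhat{r}_i<R/2$, not $\hhat{L}_i\ge\frac{\sqrt3}{2}\hhat{r}_i$, because only the former together with \eqref{eq__condition-t_k_i} gives $L_i>\max\{T_{\epsilon_i},T_{\epsilon_{i+1}}\}$.
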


Moreover, since $A_i,\,B_i$, and $C_i=\varsigma^{-1}(A_i)$ are, by construction, both totally generic and $\epsilon_i$-generic for every $i\geq 1$, we obtain
\begin{lemma}\label{lemma__up-and-down-limits}
The sequences $\{\Iup{h}{B_i}\}_{n\in\bbN_+}$ and $\{\Idown{h}{B_i}\}_{n\in\bbN_+}$ converge to the same limit. The same statement holds for the sequences $\{\Iup{h}{A_i}\}_{n\in\bbN_+}$ and $\{\Idown{h}{A_i}\}_{n\in\bbN_+}$, as well as the sequences $\{\Iup{h}{C_i}\}_{n\in\bbN_+}$ and $\{\Idown{h}{C_i}\}_{n\in\bbN_+}$.
\end{lemma}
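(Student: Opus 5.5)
The idea is to combine the total genericity of the chosen points with the $\epsilon_i$-almost total genericity secured in Part~III, and then to run the argument of Lemma~\ref{lemma__A-and-B-limits} once more with the two time directions compared.

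\emph{Step 1: forward and backward averages at a single point.} Fix $i\ge 1$. By construction $\hhat B_i$ is $\epsilon_i$-almost totally generic. Thus there is a single ergodic measure $\mu_{j(i)}$ such that, for every $T>T_{\epsilon_i}$ and every $h\in\Lip_1(\tX)$, both $\Iup[T]{h}{\hhat B_i}$ and $\Idown[T]{h}{\hhat B_i}$ lie within $\epsilon_i$ of $\Av_{\mu_{j(i)}}(h)$. The set $G_{\epsilon,C}$ requires the \emph{same} measure in both directions, so forward and backward averages are attached to one common measure.

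I transport this to $B_i$ on $\tX$ using the time change \eqref{eq__Teich-time-change} from Lemma~\ref{lemma__Teichmuller-preserves-genericity}. This needs care, because the Lipschitz constant changes under $g_{t_i}$. The cleaner route is to read the almost-genericity statement as holding directly for $B_i$ on $\tX$ at times $T=L_i\pm H_i>T_{\epsilon_i}$, which is what condition \eqref{eq__condition-t_k_i} was designed to guarantee. Then
\[
\bigl|\Iup{h}{B_i}-\Idown{h}{B_i}\bigr|<2\epsilon_i+o(1),
\]
where the $o(1)$ absorbs the discrepancy $|H_i|/L_i\to 0$ between the forward length $L_i+H_i$ and the backward length $L_i-H_i$, handled exactly as in Lemma~\ref{lemma__A-and-A-prime-limits}.

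\emph{Step 2: passing to the limit.} By Lemma~\ref{lemma__A-and-B-limits} and Lemma~\ref{lemma__C-and-B-limits}, $\Iup{h}{B_i}\to\Av_m(h)$ and $\Idown{h}{B_i}\to\Av_{\bar m}(h)$. Since $\epsilon_i\to0$, Step~1 gives $\Av_m(h)=\Av_{\bar m}(h)$ for every $1$-Lipschitz $h$, so the two limits coincide. Taking $h=\mathfrak h$ from Lemma~\ref{lemma__core-1-Lipschitz-distinct-averages} upgrades this to $m=\bar m$; equivalently, the measure $\mu_{j(i)}$ stabilizes, as in Step~1 of Lemma~\ref{lemma__A-and-B-limits}.

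\emph{Step 3: the points $A_i$ and $C_i$.} The same argument applies verbatim to $A_i$ and to $C_i=\varsigma^{-1}(A_i)$, since each is $\epsilon_i$-almost totally generic; for $C_i$ this follows from Lemma~\ref{lemma__measure-eps-generic-symmetry}. For $A_i$ the forward limit comes from Lemma~\ref{lemma__A-and-B-limits}. The backward limit along $[A_i,\cdot]$ of length $L_i\pm H_i$ is obtained by the mirror construction of Part~VI, using Step~1 of Lemma~\ref{lemma__A-and-B-limits} to show that the relevant measure eventually stabilizes. For $C_i$, Lemma~\ref{lemma__C-and-B-limits} gives the backward limit, and the forward limit is obtained analogously to Lemma~\ref{lemma__A-and-B-limits}.

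\emph{Main obstacle.} The delicate step is the justification in Step~1 that the almost-genericity window applies on $\tX$ at the specific times $L_i\pm H_i$, uniformly in $h$. The constant $T_{\epsilon_i}$ comes from Theorem~\ref{thm__almost-generic-points} on a fixed surface, while the points were chosen on $\tX_{t_i}$. I would fix this by choosing the points as $\epsilon_i$-almost totally generic on $\tX$ itself: the preimage under $g_{t_i}$ of the relevant segments in $\mcl U_{t_i}$ still has positive Lebesgue measure, so Lemma~\ref{lemma__centr-symm-eps-gen-pts} applies there. Condition \eqref{eq__condition-t_k_i} then ensures $L_i\pm H_i>T_{\epsilon_i}$.
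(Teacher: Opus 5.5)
Your Steps 1--3 are the paper's argument: the paper justifies this lemma only by noting that $A_i$, $B_i$, $C_i=\varsigma^{-1}(A_i)$ are $\epsilon_i$-almost totally generic for a single measure, so forward and backward averages past $T_{\epsilon_i}$ both lie within $\epsilon_i$ of the same $\Av_{\mu_{j(i)}}(h)$, with $\mu_{j(i)}$ eventually constant as in Step~1 of Lemma~\ref{lemma__A-and-B-limits}. Like you in Step~1, the paper takes this almost-genericity on $\tX$ as supplied by the construction, so your final-paragraph repair is not needed for this lemma, and as stated it would not work: Lemma~\ref{lemma__centr-symm-eps-gen-pts} concerns horizontal intervals and almost-genericity for the first-return map, with a threshold $C(I,\epsilon)$ depending on $I$, and \eqref{eq__condition-t_k_i} cannot control that threshold because $I$ shrinks as $t_i$ grows; moreover, a segment has zero area and $G_{\epsilon_i,T_{\epsilon_i}}$ is not flow-invariant (unlike the generic set in Lemma~\ref{lemma__int-gen-pts-ae}), so there is no flow-box transfer from area to segments.
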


From Lemmas \ref{lemma__A-and-B-limits}, \ref{lemma__C-and-B-limits}, and \ref{lemma__up-and-down-limits}, we conclude that, for every $h\in\Lip_1(\tX)$,
\begin{equation}
    \lim_{i\to\infty} \Iup{h}{A_i}
    \; = \;    
    \lim_{i\to\infty} \Idown{h}{A_i}
    \; = \;    
    \lim_{i\to\infty} \Iup{h}{\varsigma^{-1}(A_i)}
    \; = \;
    \lim_{i\to\infty} \Idown{h}{\varsigma^{-1}(A_i)} \; = \; \Av_m(h),
\end{equation}
for some $m=\mu_j$, $0\leq j\leq d-1$.

Since  $A_i$, and $\varsigma^{-1}(A_i)$ are, by construction, totally generic and  $\epsilon_i$-generic for every $i\geq 1$, we obtain a contradiction with Corollary \ref{cor__measure-generic-symmetry}.  
\end{proof}

\newpage

\section{The proof of the main conjecture}\label{UE-sect4}

\subsection{Embedded disks}\label{UE-sect4.1}

\begin{definition}
The \emph{embedded radius} of $X$ is
    $$
    r_{\mathrm{emb}}(X)
    :=
    \sup\Bigl\{ r>0 : \text{there exists an isometric embedding } B_r(0)\hookrightarrow X \Bigr\},
    $$
    where $B_r(0)\subset \mathbb{R}^2$ is the open Euclidean disk of radius $r$.
\end{definition}

\begin{theorem}\label{thm__embedded-radius-ae-points}
Let $X$ be a finite-area translation surface whose vertical flow is uniquely ergodic, and fix a marked point $P\in X$. Suppose that there exist $r>0$ and a sequence $t_k\to\infty$ such that
$$
r_{\mathrm{emb}}(X_{t_k})\ge r,
\qquad X_{t_k}:=g_{t_k}X.
$$
Then for Lebesgue-almost every point $Q\in X$ there exist a number $R(Q)>0$ and an infinite subsequence $t_{k_j}\to\infty$ such that, for every $j$, the surface $X_{t_{k_j}}$ contains an embedded Euclidean disk of radius at least $R(Q)$ centered at $g_{t_{k_j}}(Q)$, and this disk does not contain $g_{t_{k_j}}(P)$.
\end{theorem}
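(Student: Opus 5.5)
The plan is a Borel--Cantelli-type argument: first produce a set of positive measure with the required property, then upgrade it to full measure using ergodicity of the vertical flow on $X$. For each $k$, the hypothesis $r_{\mathrm{emb}}(X_{t_k})\ge r$ gives a point $c_k\in X_{t_k}$ and an isometrically embedded disk $B(c_k,r/2)$. Fix $0<R\le r/4$ and define
$$
F_k(R):=\bigl\{Q\in X:\ d(g_{t_k}Q,c_k)<r/4,\ \ g_{t_k}P\notin B(g_{t_k}Q,R)\bigr\}.
$$
If $Q\in F_k(R)$, the disk of radius $r/4\ge R$ around $g_{t_k}Q$ lies inside $B(c_k,r/2)$. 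It is therefore embedded, and by definition it avoids $g_{t_k}P$.

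Next I estimate the measure of $F_k(R)$, using that $g_{t_k}$ preserves area. The set of $Q$ with $g_{t_k}Q\in B(c_k,r/4)$ has area $\pi r^2/16$. For the excluded part, suppose $Q_{t_k}:=g_{t_k}Q$ has an embedded $R$-disk that contains $P_{t_k}:=g_{t_k}P$. Then $P_{t_k}$ and $Q_{t_k}$ are joined by a straight segment of length $<R$. So $Q_{t_k}$ lies in the image of the developing (exponential) map from $P_{t_k}$ with radius $R$, and this image has area at most $\pi R^2$. Choosing $R$ small enough that $\pi R^2<\pi r^2/32$, we get $\mathcal L_X(F_k(R))\ge \pi r^2/32$ for every $k$. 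By reverse Fatou,
$$
\mathcal L_X\Bigl(\limsup_k F_k(R)\Bigr)\ \ge\ \frac{\pi r^2}{32}\ >\ 0.
$$
For $Q\in\limsup_k F_k(R)$, the conclusion of the theorem holds with $R(Q)=R$ along the subsequence of those $k$ with $Q\in F_k(R)$.

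To pass to almost every $Q$, I define $G_\rho$ to be the set of $Q$ such that, for infinitely many $k$, the point $g_{t_k}Q$ carries an embedded $\rho$-disk avoiding $g_{t_k}P$. Set $G:=\bigcup_{n}G_{1/n}$; it contains $\limsup_k F_k(R)$, so it has positive measure. The key claim is that $G$ is invariant under the vertical flow $v_s$ on $X$, up to a null set. This comes from the time-change relation $g_t v_s Q=v^{(t)}_{e^{-t}s}\,g_tQ$, the analogue on $X$ of \eqref{eq__Teich-time-change}. It shows that $g_{t_k}v_sQ$ is at distance at most $e^{-t_k}|s|\to0$ from $g_{t_k}Q$ along a vertical segment; this holds for all $Q$ off the measure-zero set whose vertical orbit hits $\Sigma$ within time $|s|$. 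For $k$ large, if $g_{t_k}Q$ has an embedded $\rho$-disk avoiding $g_{t_k}P$, then $g_{t_k}v_sQ$ has an embedded $\rho/2$-disk (contained in the former) that also avoids $g_{t_k}P$. Hence $v_s(G_\rho)\subset G_{\rho/2}$ up to a null set, and $G$ is invariant.

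Since the vertical flow on $X$ is uniquely ergodic, Lebesgue measure is ergodic for it. An invariant set of positive measure therefore has full measure, which gives the theorem. I expect the main obstacle to be the invariance step: it needs some care with null sets and measurability (each $F_k$ is open, so the $\limsup$ sets are Borel). The other delicate point is the bound by $\pi R^2$ on the set of $Q$ whose disk captures $P$. There one must argue through straight segments, or equivalently through the exponential map at $P_{t_k}$, and not through embedded disks around $P_{t_k}$, since the latter need not exist.
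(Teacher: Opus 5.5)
Your proof is correct and follows essentially the paper's route: first a $\limsup$ of $g_{t_k}$-preimages of sets whose area is bounded below, giving positive measure; then an ergodicity upgrade that uses $g_t v_s = v_{e^{-t}s} g_t$ to move along vertical orbits. The differences are minor: the paper removes a chart ball of radius $R/4$ around $g_{t_k}(P)$ inside the embedded disk, where you bound the bad set by the image of the exponential map at $g_{t_k}(P)$; and the paper applies Birkhoff to $\mathbf 1_S$ and pulls one visit $v_{s(Q)}Q\in S$ back to $Q$, obtaining the uniform radius $R/8$, where you show the hull $G$ is flow-invariant (also, $F_k(R)$ is Borel rather than open, since the condition $d(g_{t_k}Q,g_{t_k}P)\ge R$ is closed).
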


\begin{proof}
Fix $R$ with $0<R<r$.

For each $k$, choose an isometric embedding
$$
\iota_k:B_R(0)\hookrightarrow X_{t_k},
$$
and write
$$
D_k^R:=\iota_k(B_R(0)).
$$
We also consider the concentric disk
$$
D_k^{R/2}:=\iota_k(B_{R/2}(0))\subset D_k^R.
$$

We now define a measurable subset $A_k\subset D_k^{R/2}$ of uniformly positive area, all of whose points stay a definite Euclidean distance away from $g_{t_k}(P)$ inside the chosen chart.

If $g_{t_k}(P)\notin D_k^R$, set
$$
A_k:=D_k^{R/2}.
$$
If $g_{t_k}(P)\in D_k^R$, let
$$
p_k:=\iota_k^{-1}(g_{t_k}(P))\in B_R(0),
$$
and set
$$
A_k:=\iota_k\bigl(B_{R/2}(0)\setminus B_{R/4}(p_k)\bigr).
$$

In either case, $A_k\subset D_k^{R/2}$ is measurable. Moreover,
$$
\mathcal L(A_k)\ge \pi(R/2)^2-\pi(R/4)^2=\frac{3\pi R^2}{16}.
$$
Indeed, if $g_{t_k}(P)\notin D_k^R$, then $\mathcal L(A_k)=\pi(R/2)^2$. If $g_{t_k}(P)\in D_k^R$, then removing the set
$$
B_{R/2}(0)\cap B_{R/4}(p_k)
$$
can decrease the area by at most $\pi(R/4)^2$.

Now define
$$
F_k:=g_{-t_k}(A_k)\subset X.
$$
Since the \Teichmuller~flow preserves Lebesgue measure,
$$
\mathcal L_X(F_k)=\mathcal L(A_k)\ge \frac{3\pi R^2}{16}.
$$
Let
$$
U_N:=\bigcup_{k\ge N}F_k,
\qquad
S:=\bigcap_{N=1}^\infty U_N=\limsup_{k\to\infty}F_k.
$$
The sets $U_N$ decrease with $N$, and $U_N\supset F_N$ for every $N$. Therefore
$$
\mathcal L_X(S)
=
\lim_{N\to\infty}\mathcal L_X(U_N)
\ge
\frac{3\pi R^2}{16}>0.
$$

Now apply the Birkhoff--Khinchin pointwise ergodic Theorem \ref{thm__Birkhoff-Khinchin} for the vertical flow to the indicator function $\mathbf{1}_S$. Since the vertical flow is uniquely ergodic, it is ergodic for Lebesgue measure. Hence for Lebesgue-almost every point $Q\in X$,
$$
\lim_{T\to\infty}\frac1T\int_0^T \mathbf{1}_S(v_tQ)\,dt
=
\mathcal L_X(S)>0.
$$
In particular, for Lebesgue-almost every $Q$, the forward vertical orbit of $Q$ meets $S$ infinitely often. Fix such a point $Q$, and choose one time $s(Q)\ge0$ such that
$$
v_{s(Q)}Q\in S.
$$

Since $v_{s(Q)}Q\in S=\limsup_{k\to\infty}F_k$, there exists an infinite subsequence $t_{k_j}$ such that
$$
v_{s(Q)}Q\in \bigcap_{j=1}^{\infty} F_{k_j}.
$$
Equivalently,
$$
x_j:=g_{t_{k_j}}(v_{s(Q)}Q)\in A_{k_j}.
$$

Because $x_j\in D_{k_j}^{R/2}$, its distance to the boundary of $D_{k_j}^R$ is at least $R/2$. Also, if $g_{t_{k_j}}(P)\in D_{k_j}^R$, then by construction of $A_{k_j}$,
$$
d_{D_{k_j}^R}\bigl(x_j,\,g_{t_{k_j}}(P)\bigr)\ge \frac{R}{4},
$$
where $d_{D_{k_j}^R}$ denotes Euclidean distance in the chart $D_{k_j}^R$. If $g_{t_{k_j}}(P)\notin D_{k_j}^R$, then every subset of $D_{k_j}^R$ automatically avoids $g_{t_{k_j}}(P)$.

Now use the relation
$$
g_t\circ v_s=v_{e^{-t}s}\circ g_t.
$$
Set
$$
q_j:=g_{t_{k_j}}(Q).
$$
Then $q_j$ and $x_j$ lie on the same vertical orbit segment in $X_{t_{k_j}}$, and
$$
d(q_j,x_j)=e^{-t_{k_j}}\,s(Q)\longrightarrow 0.
$$
After discarding finitely many terms, we may assume that
$$
d(q_j,x_j)<\frac{R}{8}
\qquad\text{for every }j.
$$

We claim that the open metric disk
$$
B_{R/8}(q_j)
$$
is contained in $D_{k_j}^R$ and does not contain $g_{t_{k_j}}(P)$.

Let $y\in B_{R/8}(q_j)$. Then
$$
d(y,x_j)\le d(y,q_j)+d(q_j,x_j)<\frac{R}{8}+\frac{R}{8}=\frac{R}{4}.
$$
Since $x_j$ is at distance at least $R/2$ from $\partial D_{k_j}^R$, it follows that $y\in D_{k_j}^R$. Thus
$$
B_{R/8}(q_j)\subset D_{k_j}^R.
$$

If $g_{t_{k_j}}(P)\notin D_{k_j}^R$, then this disk certainly avoids $g_{t_{k_j}}(P)$. So suppose instead that
$$
g_{t_{k_j}}(P)\in D_{k_j}^R.
$$
Since both $x_j$ and $y$ lie in the embedded chart $D_{k_j}^R$, all distances between them are Euclidean chart distances. Therefore
$$
d_{D_{k_j}^R}\bigl(y,g_{t_{k_j}}(P)\bigr)
\ge
d_{D_{k_j}^R}\bigl(x_j,g_{t_{k_j}}(P)\bigr)-d_{D_{k_j}^R}(y,x_j)
>
\frac{R}{4}-\frac{R}{4}
=
0.
$$
Hence $y\neq g_{t_{k_j}}(P)$. Therefore
$$
g_{t_{k_j}}(P)\notin B_{R/8}(q_j).
$$

We have shown that $B_{R/8}(q_j)$ is contained in the embedded Euclidean disk $D_{k_j}^R$, so it is itself an embedded Euclidean disk in $X_{t_{k_j}}$. This proves the theorem, with
$$
R(Q):=\frac{R}{8}.
$$
\end{proof}

\begin{remark}
If $P$ is nonsingular and $Q$ lies on the same vertical trajectory as $P$, then the conclusion generally fails. Indeed, if
$$
Q=v_{-a}(P)
$$
for some $a\in\mathbb R$, then
$$
g_t(Q)=v_{-e^{-t}a}(g_t(P)),
$$
so
$$
d(g_t(Q),g_t(P))\to 0
\qquad\text{as }t\to\infty.
$$
Hence no fixed positive-radius disk centered at $g_t(Q)$ can avoid $g_t(P)$ for all large $t$. This does not affect the theorem, since a single vertical trajectory has Lebesgue measure zero.
\end{remark}

\begin{corollary}\label{cor__ae-slits-fixed-basepoint}
Let $X$ be a finite-area translation surface whose vertical flow is uniquely ergodic, and fix a marked point $P\in X$. Suppose that there exist $r>0$ and a sequence $t_k\to\infty$ such that
$$
r_{\mathrm{emb}}(X_{t_k})\ge r,
\qquad X_{t_k}:=g_{t_k}X.
$$
Then for Lebesgue-almost every point $Q\in X$, the $\mathrm{N}$-cover construction determined by the slit from $P$ to $Q$ is uniquely ergodic.
\end{corollary}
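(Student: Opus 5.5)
The plan is to combine Theorem \ref{thm__embedded-radius-ae-points} with the circle criterion, Theorem \ref{thm__The-CIRCLE_CRITERION}. The hypotheses of the corollary are exactly those of Theorem \ref{thm__embedded-radius-ae-points}. We therefore obtain a full-measure set $E\subset X$ with the following property. For every $Q\in E$ there are a radius $R(Q)>0$ and a subsequence $t_{k_j}\to\infty$ such that the disk $\mathrm{U}_{t_{k_j}}(g_{t_{k_j}}Q,R(Q))$ is an embedded Euclidean disk in $X_{t_{k_j}}$ not containing $g_{t_{k_j}}P$. This is the key assumption of Theorem \ref{thm__The-CIRCLE_CRITERION}, with the roles of the endpoints as in that theorem: $Q$ is the endpoint carrying the disks, and $R=R(Q)$, $\{t_k\}$ replaced by $\{t_{k_j}\}$.

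Next we shrink $E$ so that the slit construction is admissible. We remove the following null sets:
\begin{itemize}
\item the finite singular set $\Sigma$ and the point $P$ itself;
\item the vertical trajectory through $P$, which is also null and is where the conclusion genuinely fails, by the remark after Theorem \ref{thm__embedded-radius-ae-points};
\item the null set of $Q$ for which some slit from $P$ to $Q$ has interior meeting $\Sigma$ or is vertical, justified by the dimension count in the remark of Section \ref{UE-sect1} with $P$ now fixed.
\end{itemize}
For every remaining $Q$, every admissible straight slit $s$ from $P$ to $Q$, and every resulting cyclic $\mathrm{N}$-cover $\tX$ (there are finitely many by the finiteness lemma), we then invoke Theorem \ref{thm__The-CIRCLE_CRITERION} and conclude that $\tX$ is uniquely ergodic.

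The point needing care is that the criterion's hypothesis concerns only $X_{t}$, $P_t$, $Q_t$ and not the slit. So one full-measure set of $Q$ must serve for all slits and all choices of cover simultaneously. The hypothesis depends on $Q$ only through Theorem \ref{thm__embedded-radius-ae-points}, whose conclusion does not reference $s$, so this is automatic. I will state this explicitly, since a countable or uncountable family of slits could otherwise seem to require a union of exceptional sets.

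The main obstacle I expect is checking that the disk in Theorem \ref{thm__embedded-radius-ae-points}, which only avoids $g_{t}P$, really is what the circle argument uses. Part II of that proof lifts $\mathrm{U}_{t_k}$ to $\tX_{t_k}$ as $\mathrm{N}$ disks glued cyclically along the slit. This needs the slit inside the disk to be a single radial segment ending at $Q_{t_k}$, which requires that the slit not re-enter the disk. I will first try to argue that this reentry is harmless: any additional slit segments crossing the disk avoid its center, and the constructions in Parts III–VI take place in an arbitrarily small concentric subdisk, since $\hhat{r}_i<R/2$ and the relevant points can be taken as close to $Q$ as desired. Failing that, I will replace $R(Q)$ by a smaller radius, using that $g_t$ contracts the slit's vertical component while the slit stays transverse. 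This gives the reduction to the criterion.
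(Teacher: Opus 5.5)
Your proposal is correct and matches the paper, whose proof is exactly this combination of Theorem~\ref{thm__embedded-radius-ae-points} with Theorem~\ref{thm__The-CIRCLE_CRITERION}; your extra null-set removals and your remark that one exceptional set serves all slits and all covers are harmless additions.

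The slit re-entry issue concerns the proof of the criterion rather than this corollary, and it is moot anyway. The disk minus the radial slit piece is simply connected and avoids $P$ and $Q$, so its preimage is $\mathrm{N}$ flat copies; since a loop around $Q$ crosses any other chord of the slit twice with opposite signs, these copies are glued cyclically into a cone of angle $2\pi\mathrm{N}$, which is what the regluing in Part~II expresses. Your two fallbacks would not have settled it: the vertical segments of Parts III--V must run out to the boundary of the disk, and a fixed smaller radius cannot in general keep the ever longer, nearly horizontal slit $g_t s$ out.
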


\begin{proof}
By Theorem~\ref{thm__embedded-radius-ae-points}, for Lebesgue-almost every point $Q\in X$ there exist a number $R(Q)>0$ and an infinite subsequence $t_{k_j}\to\infty$ such that, for every $j$, the surface $X_{t_{k_j}}$ contains an embedded Euclidean disk of radius at least $R(Q)$ centered at $g_{t_{k_j}}(Q)$, and this disk does not contain $g_{t_{k_j}}(P)$.

Therefore, the conditions of Theorem \ref{thm__The-CIRCLE_CRITERION} are satisfied. 
\end{proof}
Note that, apart from finite area and unique ergodicity of the vertical flow, the only additional requirement is the existence of a sequence $t_k\to\infty$ along which the embedded radius of $X_{t_k}$ is bounded below.

\subsection{Cylinders}\label{UE-sect4.2}

\begin{definition}[Flat cylinder]
A \emph{flat cylinder} (in direction $\theta$) is an open subset
$$
C\subset X\setminus\Sigma
$$
isometric, in translation coordinates, to
$$
(\mathbb R/c\mathbb Z)\times(0,h)
$$
with the Euclidean metric, for some $c,h>0$, in such a way that the curves $\{y=\mathrm{const}\}$ map to simple closed geodesics in direction $\theta$ on $X$.

We write:
$$
c(C):=\text{circumference},\qquad
h(C):=\text{height},\qquad
a(C):=c(C)h(C),\qquad
m(C):=\frac{h(C)}{c(C)}.
$$
\end{definition}

\begin{figure}[h]
\centering
\includegraphics[width=.55\linewidth]{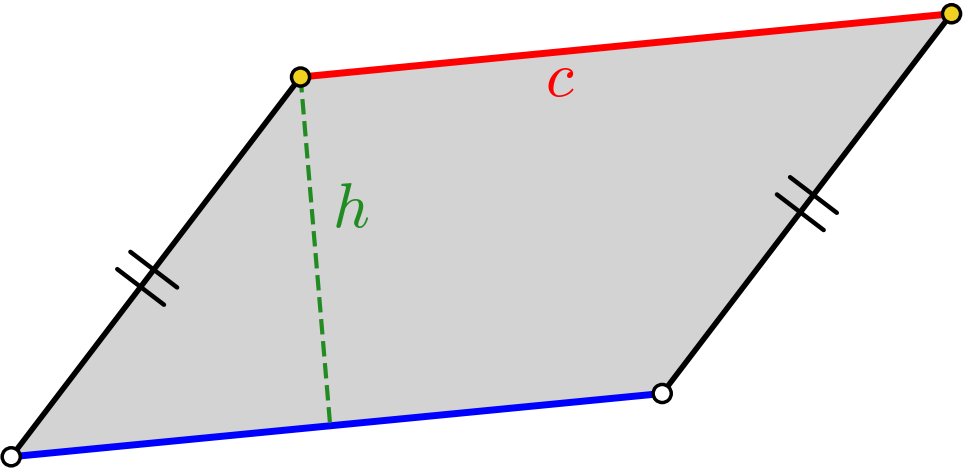}
\caption{A maximal cylinder.}
\label{fig__cylinder}
\end{figure}

\begin{definition}[Maximal flat cylinder]
A flat cylinder $C$ is \emph{maximal} if it is not properly contained in any larger flat cylinder in the same direction. Equivalently, it is a connected component of the set of points whose geodesic in direction $\theta$ is periodic. We call two maximal cylinders \emph{disjoint} if they have disjoint interiors.

By \cite[Lemma 1.6]{MasTab2002}, the closure $\overline C$ has two boundary components, each of which is a finite union of saddle connections in direction $\theta$.
\end{definition}

\begin{definition}
A maximal flat cylinder $C$ is called a \emph{pipe cylinder} if
$$
c(C)<h(C),
$$
and a \emph{perfect cylinder} if
$$
c(C)=h(C).
$$
\end{definition}

\begin{lemma}\label{lemma__disjoint-cylinders}
For a fixed translation surface $X$, any two distinct maximal pipe cylinders are disjoint.
\end{lemma}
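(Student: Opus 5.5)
The plan is to split according to whether the two pipe cylinders have the same direction. When they do, disjointness is immediate from maximality. When they do not, I will compare circumferences and heights through a length estimate.

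\emph{Same direction.} Let $C_1\neq C_2$ be maximal pipe cylinders with the same direction $\theta$. By the definition of a maximal flat cylinder, each is a connected component of the set of points whose geodesic in direction $\theta$ is periodic. Distinct components are disjoint, so $C_1\cap C_2=\varnothing$.

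\emph{Different directions.} Now let $C_1$ have direction $\theta_1$ and $C_2$ have direction $\theta_2\neq\theta_1$, and suppose for contradiction that $C_1\cap C_2\neq\varnothing$. Since both sets are open, I can pick a point $x$ in their intersection. Let $\gamma_2$ be the core curve of $C_2$ through $x$; it is a closed geodesic of length $c(C_2)$ in direction $\theta_2$.

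Identify $C_1$ with $(\mathbb R/c(C_1)\mathbb Z)\times(0,h(C_1))$. Inside $C_1$, the geodesic $\gamma_2$ is a straight line making a nonzero angle $\alpha$ with the horizontal core direction. Along it the height coordinate $y$ is strictly monotone, with $|dy/ds|=|\sin\alpha|\le 1$.

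\emph{Key claim.} The length of $\gamma_2$ is at least $h(C_1)/|\sin\alpha|\ge h(C_1)$. I will argue as follows.
\begin{enumerate}
\item Start at $x$ and follow the straight line in direction $\theta_2$ both forward and backward while it stays in $C_1$. This gives the maximal chord through $x$. Because $y$ is strictly monotone along it, the chord runs from the bottom boundary of $C_1$ to the top boundary, so its length is $h(C_1)/|\sin\alpha|$.
\item Suppose instead that $c(C_2)$ were smaller than this chord length. The geodesic $\gamma_2$ traced from $x$ for time $c(C_2)$ would return to $x$ while the subarc from $x$ stays inside the chord. But $y$ is strictly monotone along the chord, so the $y$-coordinate could not return to its starting value. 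This contradicts periodicity.
\end{enumerate}
Hence
$$
c(C_2)\ \ge\ h(C_1)\ >\ c(C_1).
$$

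\emph{Conclusion.} Exchanging the roles of the two cylinders, with a core curve of $C_1$ through $x$ crossing $C_2$, gives
$$
c(C_1)\ \ge\ h(C_2)\ >\ c(C_2).
$$
The two displayed inequalities contradict each other. Therefore the two cylinders have disjoint interiors.

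\emph{Expected main obstacle.} The delicate step is the monotonicity argument in the key claim. I must check that the chord of $\gamma_2$ through $x$ genuinely reaches both boundary components of $C_1$ before $\gamma_2$ closes up. This works because $\theta_2\neq\theta_1$ makes the height coordinate strictly monotone along any maximal segment in $C_1$. One should also note that the segment cannot hit a singularity in the interior of $C_1$, since $C_1\subset X\setminus\Sigma$.
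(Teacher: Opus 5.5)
Your proof is correct and follows the paper's argument. Parallel maximal cylinders are disjoint by maximality, and in the non-parallel case a core curve of each cylinder must cross the other from one boundary component to the other, giving $c(C_2)\ge h(C_1)>c(C_1)$ and $c(C_1)\ge h(C_2)>c(C_2)$; your strict-monotonicity-of-height argument just justifies the crossing claim that the paper asserts without detail.

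One small repair is needed in step 2. The forward arc of length $c(C_2)$ from $x$ need not stay inside the chord when $x$ lies near the top of $C_1$. Argue instead that the whole chord is an injectively parametrized subarc of the closed geodesic $\gamma_2$ (injective because $y$ is strictly monotone along it), so its length $h(C_1)/|\sin\alpha|$ is at most $c(C_2)$.
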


\begin{proof}
Let $C_1$ and $C_2$ be two distinct maximal pipe cylinders. If they are parallel, then maximality implies that either they coincide or their interiors are disjoint. So only the non-parallel case needs to be studied.

Assume, for contradiction, that $C_1$ and $C_2$ intersect and are not parallel. Then a core curve of $C_1$ crosses $C_2$ from one boundary component of $C_2$ to the other, so its length is at least the height of $C_2$:
$$
c(C_1)\ge h(C_2).
$$
Similarly,
$$
c(C_2)\ge h(C_1).
$$
But since both cylinders are pipes,
$$
h(C_1)>c(C_1),
\qquad
h(C_2)>c(C_2).
$$
Combining these inequalities gives
$$
c(C_1)\ge h(C_2)>c(C_2)\ge h(C_1)>c(C_1),
$$
a contradiction. Hence the interiors of $C_1$ and $C_2$ are disjoint.
\end{proof}

\begin{lemma}\label{lemma__number-of-pipes}
For a fixed stratum, the number of maximal pipe cylinders on any surface in that stratum is bounded above by a constant depending only on the stratum.
\end{lemma}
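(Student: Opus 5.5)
Since Lemma \ref{lemma__disjoint-cylinders} makes the maximal pipe cylinders on $X$ pairwise disjoint, I will bound their number by a topological count: a collection of pairwise disjoint maximal cylinders has pairwise disjoint, non-homotopic core curves on the punctured surface $X\setminus\Sigma$. Let $g$ be the genus and $n=|\Sigma|$, both determined by the stratum $\mathcal H(\alpha)$. The core curves should form a family of pairwise disjoint, essential, pairwise non-homotopic simple closed curves in $X\setminus\Sigma$. Any such family has at most $3g-3+n$ elements; a pants-decomposition count gives this, with the usual adjustment in the degenerate case $g=1$, $n=1$, where the bound is $1$. That gives a bound depending only on the stratum.

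The key steps, in order:
\begin{enumerate}
\item Core curves of distinct pipe cylinders are disjoint. This holds because the open cylinders have disjoint interiors by Lemma \ref{lemma__disjoint-cylinders}, and a core curve lies in the interior.
\item Each core curve is essential in $X\setminus\Sigma$ and not peripheral around a single singularity. It is a closed geodesic of positive length in a flat metric, so it cannot bound a disk. If it bounded a once-punctured disk around a cone point of angle $2\pi(k+1)$, Gauss--Bonnet (total turning $0$ for a geodesic) would force that cone angle to be $2\pi$, contradicting that it is a genuine singularity or a marked point treated as such.
\item Two disjoint core curves of distinct maximal cylinders are not freely homotopic in $X\setminus\Sigma$. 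If they were, they would bound an annulus in $X\setminus\Sigma$. A flat annulus with geodesic boundary and no cone points is foliated by parallel closed geodesics, so by Gauss--Bonnet it is a flat cylinder. Then the two curves are parallel and lie in the same maximal cylinder, which contradicts maximality and distinctness.
\end{enumerate}
Once these hold, the standard bound on the size of a multicurve finishes the argument.

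I expect the main obstacle to be step 3, in particular justifying that an annulus between two homotopic closed geodesics on a flat surface with cone points of angle $>2\pi$ contains no cone points. The cleanest route is to use uniqueness of geodesic representatives in a free homotopy class for nonpositively curved metrics: the representatives form a single flat cylinder, or a single curve. I will try to invoke this directly, citing Masur--Tabachnikov as the paper already does for the structure of maximal cylinders.

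An alternative area-based route looks tempting but does not work. Pipes have $a(C)=c(C)h(C)>c(C)^2$, and the systole is bounded below only on compact parts of the stratum, so one cannot bound the count by total area. The topological argument is therefore the one to pursue.
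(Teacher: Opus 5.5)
Your proposal is correct and is essentially the paper's proof: the core curves are disjoint by Lemma~\ref{lemma__disjoint-cylinders}, they are pairwise non-homotopic because freely homotopic closed geodesics in $X\setminus\Sigma$ lie in one maximal cylinder, and the multicurve bound $3g-3+n$ finishes, with you justifying essentiality and non-homotopy where the paper only asserts them. Two harmless slips: for a geodesic bounding a disk around a single cone point of angle $\theta$, Gauss--Bonnet gives $2\pi-\theta=2\pi$, so $\theta=0$ rather than $2\pi$, which rules out peripheral core curves outright (marked points included); and no adjustment is needed at $g=n=1$, where $3g-3+n=1$ already.
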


\begin{proof}
Choose one core curve from each maximal pipe cylinder. By Lemma~\ref{lemma__disjoint-cylinders}, these core curves are pairwise disjoint.

Moreover, two such core curves cannot be homotopic in $X\setminus\Sigma$. Indeed, any two simple closed geodesics on a translation surface that avoid the singularities and are freely homotopic lie in the closure of the same maximal cylinder. Hence distinct maximal cylinders give non-homotopic core curves.

Thus the core curves form a collection of pairwise disjoint, pairwise non-homotopic, essential simple closed curves on the punctured surface $X\setminus\Sigma$. The size of such a collection is bounded above by the usual topological constant
$$
M=3g-3+n,
$$
where $g$ is the genus and $n=|\Sigma|$ is the number of singularities. This bound depends only on the stratum.
\end{proof}

\begin{proposition}[Masur--Smillie, \cite{MasurSmillie1991HD}, Proposition 5.4]\label{prop__delaunay}
The Delaunay triangulation of an area-$1$ translation surface $X$ consists of edges which either have length at most $2\sqrt{2/\pi}$ or cross a pipe cylinder $C$. If an edge crosses $C$, then its length lies in the interval
$$
\big[h(C),\,\sqrt{h(C)^2+c(C)^2}\big].
$$
\end{proposition}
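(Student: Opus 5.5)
This is the Masur--Smillie Delaunay estimate, so I would reproduce its standard proof using the empty-circumcircle property of Delaunay triangles.

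\emph{Setup.} Every Delaunay triangle $\Delta$ has an immersed open circumdisk $D_\Delta$ of radius $\rho_\Delta$ with no singularity in its interior. Its edges are chords of $D_\Delta$, so every edge of $\Delta$ has length at most $2\rho_\Delta$. The proof splits according to the size of $\rho_\Delta$.

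\emph{Step 1: small circumdisks give short edges.} If $D_\Delta$ embeds, its area is at most the total area $1$. This gives $\pi\rho_\Delta^2\le 1$, hence edges of length at most $2/\sqrt\pi$.

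More generally, the immersion $D_\Delta\to X$ fails to be injective only through the translation structure. If two points of $D_\Delta$ map to the same point, the corresponding translation vector $w$ is realized by a closed geodesic. A translation of the flat disk by $w$ then identifies overlapping regions.

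Rather than track self-overlaps, I would pass to the inscribed square of side $\sqrt2\rho_\Delta$ in $D_\Delta$ and show it embeds unless a cylinder appears. The edge then satisfies
\[
\text{edge}\le 2\rho_\Delta=\sqrt2\cdot(\sqrt2\rho_\Delta)\le \sqrt2\cdot\sqrt{2/\pi}\cdot\sqrt{\pi/ \pi}.
\]
Concretely, the threshold $2\sqrt{2/\pi}$ arises from requiring an embedded disk of radius $\rho_\Delta/\sqrt2$, whose area $\pi\rho_\Delta^2/2$ is at most $1$. Its centre is the circumcentre, and its radius is chosen so that two distinct preimages would force a short closed geodesic.

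So the claim for Step 1 is this: if the disk of radius $\rho/\sqrt2$ about the circumcentre is not embedded, there is a closed geodesic $\gamma$ of length $c<\sqrt2\rho$ passing through $D_\Delta$ whose translates sweep through $D_\Delta$.

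\emph{Step 2: a short closed geodesic gives a cylinder.} Lifting $D_\Delta$ to the developing plane, the deck translation $w$ with $|w|=c$ maps $D_\Delta$ to a translate. The union $\bigcup_n (D_\Delta+nw)$ contains no singularity and projects into $X$. It therefore lies in a maximal cylinder $C$ in direction $w$ of circumference $c$, whose height is at least the width of this strip, namely $2\sqrt{\rho^2-c^2/4}$. A short computation with $c<\sqrt2\rho$ gives $h(C)>c(C)$, so $C$ is a pipe cylinder.

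\emph{Step 3: the edge crosses $C$, and the length bounds.} The vertices of $\Delta$ are singularities, so they lie outside the open cylinder, on or beyond $\partial C$. If the edge were contained within one boundary side, it would be parallel to the core and of length at most $c$. Otherwise the edge has endpoints on opposite sides of $D_\Delta\cap C$, so it crosses $C$, and its length is at least $h(C)$.

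For the upper bound, I would use that Delaunay edges are geodesics, with no singularity inside the disk. An edge crossing $C$ joins a singularity on one boundary to one on the other. The Delaunay condition, which says the circumcircle is empty and so the edge is locally shortest among crossing segments, lets one twist by a multiple of $w$. This keeps the horizontal offset at most $c$, giving length at most $\sqrt{h^2+c^2}$.

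\emph{Main obstacle.} I expect the hardest part to be Step 1--2. The task is to make precise that non-injectivity of the immersed circumdisk yields a single translation $w$, and to get the exact constant $2\sqrt{2/\pi}$ together with the pipe inequality $h>c$. I would first try the square/strip argument in the developing map as in Masur--Smillie, and check the constants there.
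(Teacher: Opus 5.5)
The paper gives no proof of its own here; the proposition is quoted from Masur--Smillie. Your architecture is the standard route:
\begin{itemize}
\item the empty immersed circumdisk $D_\Delta$ of radius $\rho$;
\item the concentric disk of radius $\rho/\sqrt2$, which embeds unless some $|w|<\sqrt2\rho$ identifies two of its points;
\item the strip $\bigcup_n(D_\Delta+nw)$, of width $2\sqrt{\rho^2-|w|^2/4}>|w|$, developing into a pipe cylinder.
\end{itemize}
Steps 1--2 are sound after two repairs. First, delete the inscribed-square detour. Your displayed chain evaluates to $2/\sqrt\pi$, and a non-injective square only yields $|w|<2\rho$, which is too long to force $h>c$. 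Second, replace $w$ by the primitive period of $C$, since a priori $c(C)=|w|/k$.

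The genuine gap is in Step 3, in two places.

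\emph{Same-side case.} You get $|e|\le c$, but nothing you wrote bounds $c$ by $2\sqrt{2/\pi}$; $|w|<\sqrt2\rho$ is unbounded. You need $c=c(C)$ together with $c(C)^2<c(C)h(C)\le 1$.

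\emph{Upper bound $\sqrt{h^2+c^2}$.} This is not established. The empty-circle condition does not make a Delaunay edge ``locally shortest among crossing segments,'' so the twisting argument has no basis. It is also not true in general that the vertices lie on $\partial C$. The disk $D_\Delta$ can poke out of $C$ through a boundary saddle connection, and a vertex can sit in that lens. Then the vertical extent of $e$ exceeds $h(C)$, and a horizontal offset bound of $c$ alone does not give the claim.

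What works is $|e|\le 2\rho$ together with a bound on $\rho$.
\begin{itemize}
\item Develop so that the circumcentre is at height $0$ and $D_\Delta\cap\{-y_-<y<y_+\}$ develops into $C$, with $y_++y_-=h(C)$. By symmetry assume $y_+\le h(C)/2$.
\item The open chord $D_\Delta\cap\{y=y_+\}$ maps into the top boundary of $C$. The periodic development of that boundary has a singularity in every interval of length $c(C)$. Emptiness of $D_\Delta$ therefore forces $2\sqrt{\rho^2-y_+^2}\le c(C)$.
\item Hence $4\rho^2\le 4y_+^2+c(C)^2\le h(C)^2+c(C)^2$.
\end{itemize}
The same chord estimate, applied on whichever side both vertices lie, gives $|e|\le c(C)<1$ in the same-side case. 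Since the vertices are singular, they lie outside $\{-y_-<y<y_+\}$, which gives $|e|\ge y_++y_-=h(C)$ in the crossing case.
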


\begin{lemma}\label{lemma__inscribed-triangles}
Suppose there exists a sequence $t_k\to\infty$ such that $X_{t_k}=g_{t_k}X$ contains no pipe cylinders. Then there exists a constant $R>0$ such that every $X_{t_k}$ contains an embedded disk of radius at least $R$.
\end{lemma}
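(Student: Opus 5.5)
I would derive the statement from the Masur--Smillie description of the Delaunay triangulation (Proposition~\ref{prop__delaunay}), together with the empty-circumcircle property of Delaunay triangles.

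First I normalize. The \Teichmuller\ flow preserves area, so after a single global rescaling of $X$ we may assume every $X_{t_k}$ has area $1$. Since $X_{t_k}$ has no pipe cylinders, Proposition~\ref{prop__delaunay} says that every edge of its Delaunay triangulation has length at most $\ell:=2\sqrt{2/\pi}$. The vertices of the triangulation are the points of $\Sigma$, and the number of triangles is fixed by the stratum. By Euler characteristic this number is $F=2(2g-2+n)$, where $n=|\Sigma|$.

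Next I use an area pigeonhole argument to find one fat triangle. The triangles have total area $1$, so some triangle $\Delta_k\subset X_{t_k}$ has area at least $1/F$. Its sides are at most $\ell$, so each of its angles is bounded below. Indeed, $\operatorname{area}=\tfrac12 ab\sin\gamma\le \tfrac12\ell^2\sin\gamma$, which gives $\sin\gamma\ge 2/(F\ell^2)$. Its inradius is
$$
\rho_k=\frac{2\operatorname{area}(\Delta_k)}{\operatorname{perimeter}(\Delta_k)}\ \ge\ \frac{2/F}{3\ell}=:R_0>0 .
$$
The interior of a Delaunay triangle is an embedded Euclidean triangle containing no singularity, since it is the image of an immersed flat triangle whose interior is disjoint from $\Sigma$. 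So the inscribed disk of $\Delta_k$, of radius $\rho_k\ge R_0$, is an open Euclidean disk developed inside the triangle.

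The main obstacle is proving that this disk is genuinely embedded, i.e.\ injective into $X_{t_k}$, and not merely immersed. A Delaunay 2-cell is the image of a flat triangle that is injective on its interior, because distinct open cells of a triangulation are disjoint and each open cell is embedded. I would argue this directly: the triangulation is a genuine cell decomposition of $X_{t_k}$ (this is the content of the Masur--Smillie construction), so the developing map restricted to the open triangle is injective. Hence the open incircle disk, being contained in the open triangle, is embedded.

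If one worries about degenerate self-gluings, I would fall back to a second route. Shrink to the disk of radius $R_0/2$ about the incenter. An overlap of two preimages would produce a closed geodesic loop of length less than $R_0$ through a point of the triangle. Such a loop would bound or lie in a cylinder of circumference less than $R_0$, and that cylinder would have to be crossed by a Delaunay edge of length at most $\ell$. For an area-$1$ surface with no pipe cylinders this contradicts the bound on edges crossing short cylinders. In either case $R:=R_0/2$ depends only on the stratum, which proves the lemma.
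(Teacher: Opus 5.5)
Your main argument is correct and is essentially the paper's proof: pick a Delaunay triangle of area at least $1/F$ by pigeonhole, bound its perimeter by $3\ell$ using Proposition~\ref{prop__delaunay}, apply $\rho=2A/P$, and conclude from the embeddedness of open Delaunay triangles, which gives $R_0=\frac{2}{3F\ell}$, the same constant as the paper's. The angle bound and the ``fallback'' route are unnecessary, and the fallback as written does not yield a contradiction, since Proposition~\ref{prop__delaunay} places no constraint on edges crossing short non-pipe cylinders; drop it and take $R=R_0$.
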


\begin{proof}
Fix the stratum of $X$, and let $T$ be the number of triangles in a Delaunay triangulation of a surface in that stratum. This number depends only on the stratum.

Since the total area is $1$, at least one Delaunay triangle $\Delta$ has area
$$
A_\Delta\ge \frac1T.
$$
If $X_{t_k}$ contains no pipe cylinders, then Proposition~\ref{prop__delaunay} implies that every Delaunay edge has length at most $2\sqrt{2/\pi}$. Therefore the perimeter of $\Delta$ satisfies
$$
P_\Delta\le 6\sqrt{2/\pi}.
$$
The inradius of a Euclidean triangle is
$$
r_\Delta=\frac{2A_\Delta}{P_\Delta},
$$
so
$$
r_\Delta\ge \frac{2/T}{6\sqrt{2/\pi}}
= \frac{\sqrt{\pi}}{3\sqrt2\,T}.
$$
Since the interior of each Delaunay triangle is isometrically embedded in the surface, the inscribed Euclidean disk in $\Delta$ is an embedded disk in $X_{t_k}$. Thus one may take
$$
R:=\frac{\sqrt{\pi}}{3\sqrt2\,T}.
$$
\end{proof}

Let $\mathfrak{P}_t$ denote the union of the closures of all pipe cylinders on $X_t=g_tX$, and let $A(P_t)$ be its area.

\begin{lemma}\label{lemma__big-area-pipes}
Assume that the condition of Lemma~\ref{lemma__inscribed-triangles} fails. Suppose there exist a constant $K>0$ and a sequence $t_k\to\infty$ such that
$$
A(\mathfrak{P}_{t_k})>K
\qquad\text{for all }k.
$$
Then there exist a constant $R>0$ and a sequence $t_k'\to\infty$ such that each $X_{t_k'}$ contains an embedded disk of radius at least $R$.
\end{lemma}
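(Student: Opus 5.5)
The idea is to take one pipe cylinder of definite area, push it forward along the \Teichmuller\ flow until it becomes a perfect cylinder, and use the large round disk such a cylinder contains.

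\emph{Step 1: a single large pipe.} By Lemma~\ref{lemma__number-of-pipes}, each $X_{t_k}$ carries at most $M=3g-3+n$ maximal pipe cylinders, where $M$ depends only on the stratum. By Lemma~\ref{lemma__disjoint-cylinders} their interiors are disjoint, so their areas add up to $A(\mathfrak{P}_{t_k})>K$. Hence for each $k$ we can choose a pipe cylinder $C_k\subset X_{t_k}$ with
$$
a_k:=a(C_k)>K/M, \qquad c(C_k)<h(C_k), \qquad\text{equivalently}\qquad c(C_k)<\sqrt{a_k}.
$$

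\emph{Step 2: make it perfect.} Let $(x_k,y_k)$ be the holonomy of a core curve of $C_k$. The action of $g_s$ is affine on translation charts, so $g_sC_k$ is again a maximal flat cylinder of $X_{t_k+s}$ with the same area $a_k$. Its circumference is
$$
c_k(s)=\sqrt{e^{2s}x_k^2+e^{-2s}y_k^2},
$$
and its height is $a_k/c_k(s)$. We use the standing assumption that the vertical flow on $X$ is uniquely ergodic, hence minimal. Then $X$ has no vertical cylinders, and since $g_t$ maps vertical cylinders to vertical cylinders, neither does any $X_{t_k}$; thus $x_k\neq0$. Consequently $c_k(s)\to\infty$ as $s\to+\infty$. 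Since $c_k(0)<\sqrt{a_k}$, the intermediate value theorem gives some $s_k>0$ with $c_k(s_k)=\sqrt{a_k}$. At that time $g_{s_k}C_k$ is a perfect cylinder with $c=h=\sqrt{a_k}$. Set $t_k':=t_k+s_k\ge t_k$, so that $t_k'\to\infty$.

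\emph{Step 3: the disk.} An open cylinder isometric to $(\bbR/c\bbZ)\times(0,h)$ contains an isometrically embedded open Euclidean disk of radius $\min(c,h)/2$. Center it on the middle core curve: a disk of radius at most $c/2$ does not wrap around the circumference, and a disk of radius at most $h/2$ stays away from the boundary. This cylinder lies in $X_{t_k'}\setminus\Sigma$, so $X_{t_k'}$ contains an embedded disk of radius $\sqrt{a_k}/2>\tfrac12\sqrt{K/M}$. We may therefore take
$$
R:=\tfrac12\sqrt{K/M}.
$$

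\emph{Main obstacle.} The delicate point is ensuring that the perfect time is reached by flowing \emph{forward}, so that $t_k'\to\infty$. This hinges on excluding vertical cylinders, which is why the unique ergodicity (minimality) of the vertical flow is invoked in Step 2. Without it, a vertical pipe only grows more elongated under $g_s$ for $s>0$. Flowing backward to make it perfect would require $|s_k|\approx\log\bigl(\sqrt{a_k}/c(C_k)\bigr)$, which might not keep $t_k+s_k\to\infty$.

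A secondary check is that $g_{s}C_k$ remains a maximal cylinder, not merely a flat one. This holds because $g_s$ is a homeomorphism mapping the set of periodic trajectories in the direction of $C_k$ onto the set of periodic trajectories in the image direction. In any case only the flat cylinder structure is needed to produce the embedded disk.
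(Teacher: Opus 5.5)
Your proof is correct and follows the same route as the paper. You pick a pipe of area at least $K/M$ via disjointness and the bound on the number of pipes, flow forward until it becomes a perfect cylinder, and inscribe a disk of radius $\sqrt{a(C_k)}/2$. Your use of unique ergodicity to rule out vertical cylinders (so that $x_k\neq 0$ and $c_k(s)\to\infty$) supplies a justification that the paper's proof only asserts.
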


\begin{proof}
By Lemmas~\ref{lemma__disjoint-cylinders} and \ref{lemma__number-of-pipes}, for each $k$ the pipe cylinders on $X_{t_k}$ are pairwise disjoint and there are at most $\mathrm{N}$ of them, where $\mathrm{N}$ depends only on the stratum. Hence at least one pipe cylinder $C_k\subset X_{t_k}$ has area
$$
a(C_k)\ge \frac{K}{M}.
$$

Let $c_k(s)$ and $h_k(s)$ be the circumference and height of the image of $C_k$ after flowing forward for time $s\ge 0$. The area is preserved, so
$$
c_k(s)\,h_k(s)=a(C_k)\ge \frac{K}{M}.
$$
At time $s=0$, $C_k$ is a pipe cylinder, so $c_k(0)<h_k(0)$. As $s\to+\infty$, the circumference in its periodic direction tends to $+\infty$, so by continuity there exists $s_k\ge 0$ such that
$$
c_k(s_k)=h_k(s_k)=\sqrt{a(C_k)}\ge \sqrt{\frac{K}{M}}.
$$
Thus, on the later surface
$$
X_{t_k'}:=X_{t_k+s_k},
$$
the image of $C_k$ is a perfect cylinder of height and circumference at least $\sqrt{K/M}$. Any flat cylinder of height $L$ and circumference $L$ contains an embedded Euclidean disk of radius $L/2$, so $X_{t_k'}$ contains an embedded disk of radius at least
$$
R:=\frac12\sqrt{\frac{K}{M}}.
$$
Since $t_k'\ge t_k\to\infty$, we have $t_k'\to\infty$.
\end{proof}

\begin{lemma}\label{lemma__small-area-pipes}
Assume that the conditions of Lemmas~\ref{lemma__inscribed-triangles} and \ref{lemma__big-area-pipes} both fail. In other words, assume that for all sufficiently large $t$ the surface $X_t=g_tX$ contains at least one pipe cylinder, and that
$$
A(\mathfrak{P}_t)\longrightarrow 0
\qquad\text{as }t\to\infty.
$$
Then there exist $R>0$ and $\tau>0$ such that for every $t>\tau$, the surface $X_t$ contains an embedded disk of radius at least $R$.
\end{lemma}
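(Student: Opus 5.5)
The plan is to reuse the Delaunay-triangle argument of Lemma~\ref{lemma__inscribed-triangles}, but applied only to the part of $X_t$ that lies outside the pipe cylinders. Since $A(\mathfrak{P}_t)\to 0$, we may choose $\tau$ so that $A(\mathfrak{P}_t)<\tfrac12$ for all $t>\tau$. Then the complement $X_t\setminus\mathfrak{P}_t$ has area greater than $\tfrac12$.

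Fix $t>\tau$ and take the Delaunay triangulation of $X_t$, which has $T$ triangles, with $T$ depending only on the stratum. We split the triangles into two classes.
\begin{enumerate}
\item \emph{Good} triangles, all of whose edges have length at most $2\sqrt{2/\pi}$.
\item \emph{Bad} triangles, having at least one edge that crosses a pipe cylinder, in the sense of Proposition~\ref{prop__delaunay}.
\end{enumerate}

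The key intermediate claim is that the bad triangles carry at most a controlled amount of area outside $\mathfrak{P}_t$. We would aim to show that the total area of bad triangles lying outside $\mathfrak{P}_t$ is at most $\tfrac14$ once $t$ is large.

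Granting this claim, some good triangle $\Delta$ has area at least $\frac{1}{4T}$. Its perimeter is at most $6\sqrt{2/\pi}$, so the inradius formula $r_\Delta=2A_\Delta/P_\Delta$ gives an embedded disk of radius
$$
R:=\frac{\sqrt{\pi}}{12\sqrt2\,T},
$$
exactly as in the proof of Lemma~\ref{lemma__inscribed-triangles}.

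The main obstacle is controlling the bad triangles. A triangle with an edge crossing a pipe cylinder $C$ of height $h$ and circumference $c$ with $c<h$ must, since its interior is embedded and avoids $\Sigma$, stay inside a thin strip. We would try the following local geometry argument.
\begin{enumerate}
\item Since Delaunay triangles have empty circumcircles, the circumdisk of a triangle whose long edge passes through $C$ cannot contain the cone points on $\partial C$.
\item Hence the width of such a triangle transverse to the core direction is at most of order $c$.
\item The part of the triangle outside $\overline{C}$ therefore lies in a region of width $O(c)$ beyond the ends of $C$, and its area is $O(c\cdot\sqrt{h^2+c^2})\lesssim O(a(C))$ plus boundary terms of size $O(c^2)$.
\end{enumerate}
Summing over the at most $M$ pipe cylinders (Lemma~\ref{lemma__number-of-pipes}) bounds the bad area outside $\mathfrak{P}_t$ by a constant times $A(\mathfrak{P}_t)$. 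This tends to $0$, which gives the claim for $t$ beyond a possibly larger $\tau$.

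If the transverse-width estimate fails, the fallback is to bound the bad area by $O(\sum_C c(C)\cdot \ell)$, where $\ell$ is the length of the longest short edge, and to use $c(C)^2<a(C)\to 0$.

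A different route is also available if the triangle estimate proves delicate. Take a point $x$ of $X_t\setminus\mathfrak{P}_t$ at distance at least some $\delta$ from $\Sigma\cup\partial\mathfrak{P}_t$; such a point exists because the $\delta$-neighborhood of these finitely many saddle connections has small area. Then show that the injectivity radius at $x$ is bounded below using the absence of long thin cylinders through $x$. We would pursue the Delaunay version first, since it directly parallels Lemma~\ref{lemma__inscribed-triangles}.
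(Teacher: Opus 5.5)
Your primary (Delaunay) route differs from the paper's. It can be completed, but the key estimate needs a different justification from the one you sketch.

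\textbf{The bad-triangle estimate.} Step (2) has the direction wrong: a bad triangle is thin \emph{along} the core curves, while its extent across $C$ is about $h$. Also, the empty-circumcircle property does not by itself give thinness. The circumdisk has radius at least $h/2$ and is only immersed, so it may wrap around $C$.

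What works is the embeddedness of the open triangle, which you mention. Take a common development of $\Delta$ and the strip $\{0<y<h\}$ covering $C=(\mathbb R/c\mathbb Z)\times(0,h)$. Then:
\begin{itemize}
\item every slice of $\Delta^\circ$ parallel to the core, at a height in $(0,h)$, has length at most $c$;
\item no vertex of $\Delta$ lies in the open strip;
\item the slice length is a concave function of the height.
\end{itemize}
Let $y_1\le 0$ and $y_3\ge h$ be the extreme vertex heights. Concavity bounds the maximal slice by $c\,(y_3-y_1)/h$. Moreover, $y_3-y_1$ is at most the length of the edge joining the extreme vertices. That edge crosses $C$, so by Proposition~\ref{prop__delaunay} its length is at most $\sqrt{h^2+c^2}<\sqrt2\,h$.

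Hence $\mathrm{area}(\Delta)<c\,h=a(C)$ for the \emph{whole} bad triangle, and the bad triangles have total area less than $T\,A(\mathfrak{P}_t)\to0$. There is no need to isolate their part outside $\mathfrak{P}_t$, and your fallback is unnecessary. The rest of your argument (a good triangle of area at least $1/(4T)$, then the inradius bound) goes through.

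\textbf{Comparison with the paper.} The paper takes essentially the route you list last, but more economically. It only needs $x\notin\mathfrak{P}_t$ at distance more than $\delta$ from $\Sigma_t$; distance from $\partial\mathfrak{P}_t$ is not required. Such $x$ exist because $\mathcal L(\Sigma_t(\delta))\le\pi(2g-2+n)\delta^2$.

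Suppose the developing map at $x$ is not injective on $B_{\delta/4}(0)$. Then there is a closed geodesic through $x$ of length $c<\delta/2$. Each boundary component of its maximal cylinder $C$ carries a cone point within tangential offset $c/2$. So both distances from $x$ to $\partial C$ exceed $\sqrt{\delta^2-c^2/4}$, whence $h(C)>c$. Thus $C$ is a pipe containing $x$, a contradiction.

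\textbf{What each route buys.} The paper's argument is elementary and self-contained: it uses no Delaunay triangulation and no length bound from Proposition~\ref{prop__delaunay}. It only needs $A(\mathfrak{P}_t)$ bounded away from $1$, rather than small compared with $1/T$. Every point of $X_t\setminus(\mathfrak{P}_t\cup\Sigma_t(\delta))$ is an admissible center. Your route parallels Lemma~\ref{lemma__inscribed-triangles} and gives the explicit radius $\sqrt{\pi}/(12\sqrt2\,T)$. The cost is dependence on the full Masur--Smillie statement, including its upper bound on crossing-edge lengths.
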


\begin{proof}
Let $\Sigma_t=\{g_t(s_1),\dots,g_t(s_n)\}$ be the singular set of $X_t$, and let
$$
\Sigma_t(\delta):=\{x\in X_t:d(x,\Sigma_t)\le \delta\}.
$$
If the cone angle at $g_t(s_i)$ is $2\pi(k_i+1)$, then the Euclidean area of the radius-$\delta$ neighborhood of $g_t(s_i)$ is $\pi(k_i+1)\delta^2$. Hence
$$
\mathcal L(\Sigma_t(\delta))
\le
\pi\Bigl(\sum_{i=1}^n (k_i+1)\Bigr)\delta^2
=
\pi(2g-2+n)\delta^2.
$$
Let
$$
C_{\mathrm{str}}:=\pi(2g-2+n).
$$

Choose $\varepsilon>0$ and $\delta>0$ so that
$$
1-\varepsilon-C_{\mathrm{str}}\delta^2>0.
$$
Since $A(\mathfrak{P}_t)\to 0$, there exists $\tau>0$ such that
$$
A(\mathfrak{P}_t)<\varepsilon
\qquad\text{for all }t>\tau.
$$
For such $t$, the set
$$
F_\delta(t):=X_t\setminus \bigl(\mathfrak{P}_t\cup \Sigma_t(\delta)\bigr)
$$
has positive area, hence is nonempty. Fix $x\in F_\delta(t)$.

Because $d(x,\Sigma_t)>\delta$, there is a well-defined local developing map
$$
\iota_x:B_\delta(0)\to X_t
$$
which is a local isometry and satisfies $\iota_x(0)=x$.

We claim that the restriction of $\iota_x$ to $B_{\delta/4}(0)$ is injective. Suppose not. Then there exist distinct points $u,v\in B_{\delta/4}(0)$ such that
$$
\iota_x(u)=\iota_x(v).
$$
In translation coordinates, the two local branches differ by a translation
$$
T(z)=z+w,
\qquad w:=v-u.
$$
Since
$$
|w|=|v-u|<\frac{\delta}{2},
$$
we have $w\in B_\delta(0)$, and therefore
$$
\iota_x(w)=\iota_x(0)=x.
$$
So the straight segment from $0$ to $w$ projects to a simple closed geodesic $\gamma$ based at $x$ of length
$$
c:=|w|<\frac{\delta}{2}.
$$

Let $C$ be the maximal cylinder containing $\gamma$. Let $a$ and $b$ be the distances from $\gamma$ to the two boundary components of $C$, so that
$$
h(C)=a+b.
$$
Each boundary component has total length $c$. Therefore on each boundary component one can find a singularity whose tangential displacement from the perpendicular through $x$ is at most $c/2$. It follows that there are singularities at Euclidean distance at most
$$
\sqrt{a^2+(c/2)^2}
\qquad\text{and}\qquad
\sqrt{b^2+(c/2)^2}
$$
from $x$. Since $x\notin \Sigma_t(\delta)$, every singularity is at distance bigger than $\delta$ from $x$, hence
$$
a>\sqrt{\delta^2-(c/2)^2},
\qquad
b>\sqrt{\delta^2-(c/2)^2}.
$$
Therefore
$$
h(C)=a+b>2\sqrt{\delta^2-(c/2)^2}.
$$
Because $c<\delta/2$, the right-hand side is strictly larger than $c$, so
$$
h(C)>c.
$$
Thus $C$ is a pipe cylinder. But $x\in \gamma\subset C\subset \mathfrak{P}_t$, contradicting the choice $x\in X_t\setminus \mathfrak{P}_t$.

This contradiction proves that $\iota_x|_{B_{\delta/4}(0)}$ is injective. Hence $X_t$ contains an embedded disk of radius $\delta/4$ centered at $x$.

Since $t>\tau$ was arbitrary, the conclusion holds with
$$
R=\frac{\delta}{4}.
$$
\end{proof}

\printbibliography


\end{document}